\title{Geometric and o-minimal Littlewood--Offord problems}
\author{Jacob Fox\thanks{Department of Mathematics, Stanford University, Stanford, CA 94305.
Email: \href{mailto:jacobfox@stanford.edu} {\nolinkurl{jacobfox@stanford.edu}}.
Research supported by a Packard Fellowship and by NSF Award DMS-1855635.}\and Matthew Kwan\thanks{Department of Mathematics, Stanford University, Stanford, CA 94305.
Email: \href{mailto:mattkwan@stanford.edu} {\nolinkurl{mattkwan@stanford.edu}}.
Research supported by NSF Award DMS-1953990.}\and Hunter Spink\thanks{Department of Mathematics, Stanford University, Stanford, CA 94305.
Email: \href{mailto:hspink@stanford.edu} {\nolinkurl{hspink@stanford.edu}}.}}
\date{}
\newtheorem{thm}{Theorem}[section]
\crefname{thm}{Theorem}{Theorems}
\newtheorem{prop}[thm]{Proposition}
\newtheorem{cor}[thm]{Corollary}
\newtheorem{qu}{Question}
\newtheorem{conjecture}[thm]{Conjecture}
\crefname{conjecture}{Conjecture}{Conjectures}
\newtheorem{lem}[thm]{Lemma}
\crefname{lem}{Lemma}{Lemmas}
\newtheorem{defn}[thm]{Definition}
\newtheorem{fact}[thm]{Fact}
\theoremstyle{definition}
\newtheorem{rem}[thm]{Remark}
\begin{document}

\global\long\def\RR{\mathbb{R}}%
\global\long\def\QQ{\mathbb{Q}}%
\global\long\def\E{\mathbb{E}}%
\global\long\def\Var{\operatorname{Var}}%
\global\long\def\CC{\mathbb{C}}%
\global\long\def\NN{\mathbb{N}}%
\global\long\def\ZZ{\mathbb{Z}}%
\global\long\def\Bad{\operatorname{Bad}}%
\global\long\def\Ber{\operatorname{Bernoulli}}%
\global\long\def\Inf{\operatorname{Inf}}%
\global\long\def\vol{\operatorname{vol}}%
\global\long\def\conv{\operatorname{conv}}%
\global\long\def\floor#1{\left\lfloor #1\right\rfloor }%
\global\long\def\ceil#1{\left\lceil #1\right\rceil }%

\let\originalleft\left
\let\originalright\right
\renewcommand{\left}{\mathopen{}\mathclose\bgroup\originalleft}
\renewcommand{\right}{\aftergroup\egroup\originalright}

\let\OLDthebibliography\thebibliography
\renewcommand\thebibliography[1]{
  \OLDthebibliography{#1}
  \setlength{\parskip}{0pt}
  \setlength{\itemsep}{3pt plus 0.3ex}
}

\global\long\def\mk#1{\textcolor{red}{\textbf{[MK comments:} #1\textbf{]}}}

\global\long\def\hs#1{\textcolor{red}{\textbf{[HS comments:} #1\textbf{]}}}

\newcommand{\hunter}[1]{\textcolor{red}{#1}}

\maketitle

\begin{abstract}
The classical Erd\H os--Littlewood--Offord theorem says that for nonzero vectors $a_1,\dots,a_n\in \RR^d$, any $x\in \RR^d$, and uniformly random $(\xi_1,\dots,\xi_n)\in\{-1,1\}^n$, we have $\Pr(a_1\xi_1+\dots+a_n\xi_n=x)=O(n^{-1/2})$. In this paper we show that $\Pr(a_1\xi_1+\dots+a_n\xi_n\in S)\le n^{-1/2+o(1)}$
whenever $S$ is definable with respect to an o-minimal structure (for example, this holds when $S$ is any algebraic hypersurface), under the necessary condition that it does not contain a line segment. We also obtain an inverse theorem in this setting.

\end{abstract}

\section{Introduction}

Consider a random variable $X=a_{1}\xi_{1}+\dots+a_{n}\xi_{n}$,
where $a_{1},\dots,a_{n}\in\RR$ are real numbers and $\xi_{1},\dots,\xi_{n}\in \{-1,1\}$
are i.i.d Rademacher random variables (meaning that each $\xi_i$ is independent and identically distributed with
$\Pr\left(\xi_{i}=1\right)=\Pr\left(\xi_{i}=-1\right)=1/2$). Broadly speaking, the classical Littlewood--Offord problem
asks for \emph{anti-concentration} estimates for $X$: what can we say about the maximum probability that $X$
is equal to a single value, or falls in an interval of prescribed
length? We refer the reader to \cite{NV13} for a thorough survey of the Littlewood--Offord problem and its applications, beyond the brief (and selective) history in what follows.

In connection with their work on random polynomials, Littlewood and
Offord~\cite{LO43} proved that if each $\left|a_{i}\right|\ge1$,
then $\max_{x\in \mathbb{R}}\Pr\left(\left|X-z\right|\le1\right)=O\left(\log n/\sqrt{n}\right)$.
Erd\H os~\cite{Erd45} later obtained the optimal $O(1/\sqrt{n})$ bound, in what is now known as the Erd\H os--Littlewood--Offord theorem (to see that the $O(1/\sqrt n)$ bound cannot be improved, consider the case where each $a_i=1$). Note that if one is concerned about the probability of taking a particular value instead of falling in an interval, a simple rescaling argument shows that with only the assumption that each $a_i\ne 0$ we have $\max_{x\in \mathbb{R}}\Pr\left(X=x\right)=O\left(1/\sqrt{n}\right)$.

The Littlewood--Offord problem naturally generalises to higher dimensions. For
any $d\in\NN$, consider $a_{1},\dots,a_{n}\in\RR^{d}$, and again
let $X=a_{1}\xi_{1}+\dots+a_{n}\xi_{n}$, where $\xi_{1},\dots,\xi_{n}$
are i.i.d.~Rademacher random variables. Kleitman \cite{K70} proved that if each $\|a_i\|=\sqrt{a_i\cdot a_i}\ge 1$, then $\max_{x\in \RR^d}\Pr(\|X-z\|\le R)\le C_{R,d}/\sqrt{n}$ for some $C_{R,d}$ depending only on $R,d$, and Frankl and F\"uredi~\cite{FF88} later obtained asymptotically optimal bounds on $C_{R,d}$ (see \cite[Section~2]{NV13} for more about the history of this problem). The point-concentration version of the Littlewood--Offord problem generalises much more easily to higher dimensions: if each $a_i\ne 0$, we can project
to a generic 1-dimensional subspace and apply the Erd\H os--Littlewood--Offord theorem to see that the optimal point concentration bound $\max_{x\in \RR^d}\Pr(X=x)=O(1/\sqrt{n})$ still holds.

In this paper we consider a different generalisation of the Littlewood--Offord problem to $\mathbb{R}^d$.

\begin{qu}\label{qu:main}
Assuming only that each $a_i\ne 0$, what geometric constraints can be imposed on a set $S\subset \RR^d$ which imply non-trivial uniform upper bounds on $\Pr(X\in S)$?
\end{qu}

Since we do not make any assumption about the size of the $a_i$ (other than that they are nonzero), this question is much more about the geometry of $S$ than the spatial anti-concentration of $X$, and therefore has a very different flavour to previous Littlewood-Offord variants. For example, when $S$ is a smooth manifold, any answer to this question would have to be applicable both at a local scale (where $S$ resembles an affine subspace), and at a global scale (where the geometry of $S$ becomes more visible).

It seems that questions of this type have not been systematically studied before. However the case where $S\subset \mathbb{R}^2$ is a parabola was considered by Costello in his study of the \emph{quadratic Littlewood--Offord problem}, where he showed  that $\Pr(X\in S)\le n^{-1/2+o(1)}$ by observing a connection to incidence geometry and applying a weighted version of the Szemer\'edi--Trotter theorem \cite[Lemma~13]{Cos13}.


If $S$ contains a line segment (between $x,y\in \RR^d$, say), then taking $a_1=\frac{x+y}{2}, a_2=\dots=a_n= \frac{x-y}{2n}$ yields $\Pr(X\in S)\ge \frac{1}{2}$, so no non-trivial uniform upper bound on $\Pr(X\in S)$ can be established. Highly ``oscillatory'' behaviour is also a problem for us: for example, if $a_1=\dots=a_n=(1,0)\in \RR^2$, then $X$ always lies on the graph of the function $x\mapsto \sin(2\pi x)$.

A very general property that precludes this kind of oscillatory behaviour is the property of being \emph{definable with respect to an o-minimal structure}. We postpone the formal definition to \cref{subsec:o-minimal}, but for now we note that this property holds for any set that can be expressed via a Boolean combination of equalities and inequalities involving compositions of polynomials, the exponential function, and restrictions of real analytic functions to compact boxes (sets obtainable in this way are said to be \emph{explicitly definable}, see for example the survey of Scanlon \cite{S17}). For example, any affine variety, and more generally any semi-algebraic set, is definable with respect to an o-minimal structure.

If $S$ is definable with respect to an o-minimal structure, and does not contain any line segment, we prove that $\Pr(X\in S)\le n^{-1/2+o(1)}$, nearly matching Erd\H os' $O(1/\sqrt{n})$ point concentration bound for the usual Littlewood--Offord problem.

\begin{thm}\label{thm:baby-forwards}
Let $S\subseteq\RR^d$ be a set which is definable with respect to an o-minimal structure and does not contain any line segment, and let $\alpha>0$. If $n$ is sufficiently large in terms of $\alpha,S$, then the following holds.
Consider nonzero $d$-dimensional vectors $a_{1},\dots,a_{n}\in\RR^{d}$, and write
$X=a_{1}\xi_{1}+\dots+a_{n}\xi_{n}$, where $\xi_{1},\dots,\xi_{n}$
are i.i.d.\ Rademacher random variables. Then $$\Pr(X\in S)\le n^{-1/2+\alpha}.$$
\end{thm}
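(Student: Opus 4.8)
The plan is to prove \cref{thm:baby-forwards} by combining a projection/structural argument with the classical Erdős--Littlewood--Offord theorem and a quantitative use of o-minimality (the ``cell decomposition'' and ``definable choice/trivialization'' machinery). Let me sketch the route.

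The plan is to reduce---via o-minimal cell decomposition---to the case that $S$ is the graph of a ``uniformly curved'' definable function, and then to establish anti-concentration on such a graph by \emph{linearizing at an appropriate scale} and invoking the classical (multidimensional) Erd\H os--Littlewood--Offord / Kleitman estimate for the resulting affine problem.

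First I would apply the o-minimal cell decomposition theorem to write $S$ as a finite union of cells $C_1,\dots,C_m$ with $m$ bounded in terms of $S$, so that it suffices to bound each $\Pr(X\in C_i)$. A full-dimensional cell contains a box, hence a line segment, and is therefore excluded; thus each $C_i$ has some dimension $k\le d-1$ and is, after permuting coordinates, the graph of a definable map $f\colon U\to\RR^{d-k}$ on an open cell $U\subseteq\RR^{k}$, and since $S$ contains no line segment, $f$ is affine on no segment of $U$. Using that definable functions are piecewise $C^{2}$ and that the locus of points near which $f$ is locally affine is a definable set with empty interior (otherwise $f$ would be affine on a box), I would refine the decomposition so that on each remaining piece either $k=0$ (a single point) or $f$ is $C^{2}$ with uniformly non-degenerate second-order behaviour---there is a definable unit direction $v$ in the base $\RR^{k}$ along which the second derivative of $f$ is bounded away from $0$, with $f$ and its derivatives bounded; here one may assume the relevant part of $U$ is bounded because $X'$, the projection of $X$ to the base coordinates, is a Rademacher sum taking values in a fixed box. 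Finitely many lower-dimensional ``leftover'' pieces (boundaries, loci of non-smoothness) are handled recursively.

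For $k=0$ the bound $\Pr(X=x)=O(n^{-1/2})$ is the one-dimensional Erd\H os--Littlewood--Offord theorem applied to a generic linear projection of $X$ (legitimate since all $a_{i}\ne 0$). For a uniformly curved graph, the core estimate is $\Pr(X\in C_i)\le n^{-1/2+o(1)}$. Write $X=(X',X'')\in\RR^{k}\times\RR^{d-k}$ and $a_{i}=(b_{i},c_{i})$. I would partition the relevant bounded region of the base into translates $Q_{1},Q_{2},\dots$ of a box of side $\delta$, with $N=O(\delta^{-k})$ of them meeting the region; over $Q_{j}$ the graph lies within distance $O(\delta^{2})$ of an affine subspace with linear part $M_{j}$, so $X\in C_i$ forces, for some $j$, both $X'\in Q_{j}$ and $\lVert W_{j}-w_{j}\rVert\le O(\delta^{2})$, where $W_{j}=\sum_{i}(c_{i}-M_{j}b_{i})\xi_{i}$ is a Rademacher sum and $w_{j}$ is a constant. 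Hence $\Pr(X\in C_i)\le\sum_{j}\Pr\!\big(X'\in Q_{j},\ \lVert W_{j}-w_{j}\rVert\le O(\delta^{2})\big)$. Two points then drive the argument: (i) the events $\{X'\in Q_{j}\}$ are disjoint, so $\sum_{j}\Pr(X'\in Q_{j})\le 1$, which is what makes a union bound over the $N=O(\delta^{-k})$ boxes affordable; and (ii) conditionally on $X'\in Q_{j}$, the transverse Rademacher sum $W_{j}$ should remain anti-concentrated at scale $\delta^{2}$ at rate $n^{-1/2+o(1)}$, via a robust quantitative form of Erd\H os--Littlewood--Offord (equivalently, a small-ball bound for $X$ in the thin parallelepipeds $\{X'\in Q_{j},\ \lVert X''-(\text{affine})\rVert\le O(\delta^{2})\}$). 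Choosing $\delta$ to be a suitable small power of $n$ balances the two. When $1\le k<d-1$ one further reduces the ambient dimension by projecting onto the base coordinates together with a single coordinate of $f$ on which $f$ is affine on no segment and inducting on $d$, taking care to retain enough of the $a_{i}$ with nonzero image under this projection.

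I expect the difficulty to be concentrated entirely in point (ii): reconciling the linearization scale $\delta^{2}$ with the anti-concentration gain. A naive union bound loses the factor $N=\delta^{-k}$, and conditioning $X'$ to a window of width $\delta$ can in principle destroy the spread of the transverse coordinates---for instance if many $a_{i}$ are nearly parallel, or if the coefficients $c_{i}-M_{j}b_{i}$ are all far smaller than $\delta^{2}$. Handling this seems to require a multiscale or adapted choice of the partition (matching the grid to the scales present among the $a_{i}$), together with a preliminary partition of $\{1,\dots,n\}$ into blocks reflecting the linear structure of the $a_{i}$, applying the robust Erd\H os--Littlewood--Offord / Kleitman bound blockwise and conditioning on the ``degenerate'' blocks. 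A cleaner alternative route would be to first prove an inverse theorem---deducing from $\Pr(X\in S)\ge n^{-1/2+\alpha}$ that the $a_{i}$ are arithmetically clustered near a low-dimensional subspace---and then argue that such clustering together with $\Pr(X\in S)$ being large forces $S$ to asymptotically contain a line segment, contradicting the hypothesis once $n$ is large.
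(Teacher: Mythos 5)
Your plan founders on a point that is in fact the central difficulty of the problem: the hypothesis gives \emph{no lower bound on the sizes} of the $a_i$ (only $a_i\neq 0$), so no spatial (metric) anti-concentration for $X$ is available at any fixed scale. Your core estimate (ii) asks that, after linearizing the graph over a box $Q_j$ of side $\delta$, the transverse Rademacher sum $W_j=\sum_i(c_i-M_jb_i)\xi_i$ have small-ball probability $n^{-1/2+o(1)}$ at scale $\delta^2$. This is simply false in general: the transverse coefficients $c_i-M_jb_i$ may all be astronomically smaller than $\delta^2$ (take $a_i$ with rapidly decaying, nearly parallel components), in which case $\Pr(\lVert W_j-w_j\rVert\le\delta^2)$ is of order $1$, and no choice of $\delta$ or multiscale grid adapted to the $a_i$ obviously repairs this --- that repair \emph{is} the problem, not a technical afterthought. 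The same scale-freeness undermines the earlier reduction: a cell of a decomposition has no uniform quantitative curvature bound relevant to the (unknown, coefficient-dependent) scale at which $X$ lives, the relevant region of the base is not a fixed box (it depends on $\sum_i\lVert b_i\rVert$), and "second derivative bounded away from $0$" degenerates near flat points (e.g.\ $f(x)=x^3$), exactly where the probability mass of $X$ cannot be controlled. There is also a secondary issue that $\{X'\in Q_j\}$ and $W_j$ involve the same $\xi_i$, so the conditional anti-concentration claim would need a decoupling argument even in the unit-coefficient case. Your closing alternative (inverse theorem, then ``arithmetic clustering forces $S$ to asymptotically contain a line segment'') is closer in spirit to what actually works, but as stated it is not a proof: the inverse step places the $a_i$ in a bounded-rank, polynomial-size generalised arithmetic progression, not near a low-dimensional subspace, and large $\Pr(X\in S)$ under that structure does not force approximate segments in $S$.

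For contrast, the paper never uses small-ball estimates. It first proves $\Pr(X\in S)\le C_S\rho^{1/((k+1)2^k)}$ with $\rho=\max_x\Pr(X=x)$ (\cref{thm:pointvsS}), via a decomposition of definable sets without segments into ``self-irreducible'' pieces, a resulting bounded-complexity property of sum hypergraphs (\cref{prop:o-minimal-complexity}), and a weighted K\H ov\'ari--S\'os--Tur\'an theorem. Combining this with the Tao--Vu inverse theorem yields \cref{thm:baby-inverse}: either $\Pr(X\in S)\le n^{-1/2}$ already, or most $a_i$ lie in a small GAP $Q$ of bounded rank. One then lifts to integer points in $\RR^q$, uses Pila's theorem to cover the lifted set by $n^{o(1)}$ semi-algebraic pieces of bounded description complexity, and bounds the probability on each piece by $n^{-1/2+o(1)}$ via Kane's average-sensitivity bound for polynomial threshold functions (the Gotsman--Linial-type argument of \cref{thm:Gotsman}, adapted via \cref{lem:algebraic-complicated-weak}). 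The $n^{-1/2+o(1)}$ thus comes from Boolean-function sensitivity and exact point concentration, not from spatial small-ball probabilities, which is precisely how the absence of a scale for the $a_i$ is circumvented.
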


In fact, $n$ only has to be sufficiently large in terms of $\alpha,d$, and the ``complexity'' of $S$ (a notion for definable sets we will introduce in \cref{def:o-complexity}). Every semi-algebraic set (i.e., every Boolean combination of sets in $\mathbb{R}^d$ defined by polynomial equalities and inequalities) is definable with respect to every o-minimal structure, and in this case being of bounded complexity amounts to a bound on the degrees of the polynomials and the number of regions used in the Boolean combination. So, for example, if $n$ is sufficiently large in terms of $\alpha,d$ we have
$$\Pr(\|X-z\|=R)\le  n^{-1/2+\alpha},$$
because for fixed $d$, the spheres of the form $\{x\in \RR^d:\|x-z\|^2=R^2\}$, for $(R,z)\in \mathbb{R}_{>0}\times \mathbb{R}^d$, have bounded complexity.

We remark that the assumption that each $\xi_i$ has a Rademacher distribution (or even that the $\xi_i$ have the same distribution) is actually not essential. Indeed, using standard conditioning arguments (see \cite[Section~4]{MNV16}) one can deduce from \cref{thm:baby-forwards} a stronger result that holds whenever $X$ is a sum of independent random variables which each have reasonably non-degenerate distributions.

We also remark that another restriction that rules out the possibility of ``highly oscillatory behaviour'' is the restriction that $S$ is a set of points in convex position (for example, the boundary of a strictly convex set). We are able to prove a weaker version of \cref{thm:baby-forwards} in this setting; see \cref{subsec:hypergraphs}.

It seems plausible that the optimal point concentration bound $O(1/\sqrt n)$ given by the Erd\H os--Littlewood--Offord theorem also holds in the setting of \cref{thm:baby-forwards}.
\begin{conjecture}\label{conj:forwards}In \cref{thm:baby-forwards} we have $\Pr(X\in S)\le C_{S}/\sqrt n$, for some $C_S$ depending only on $S$.
\end{conjecture}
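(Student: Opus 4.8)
The plan is to run the strategy behind \cref{thm:baby-forwards} but to remove the $n^{o(1)}$ slack wherever it is introduced. First I would use o-minimal cell decomposition, together with a compactness argument, to reduce to a quantitative model case: $S$ is a relatively compact connected $C^2$ submanifold of some dimension $k<d$, covered by a bounded number of definable charts, on which the absence of line segments is witnessed uniformly. Concretely, o-minimality (``polynomially bounded geometry'') should supply constants $c,D>0$, depending only on the complexity of $S$, such that for every $p\in S$ and every unit tangent vector $v\in T_pS$, the arc-length geodesic on $S$ leaving $p$ in direction $v$ has distance at least $c\,t^{D}$ from the affine line $p+\RR v$ for all small $t>0$. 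This quantitative non-flatness is exactly what rules out cylinders and ruled pieces, and since \cref{thm:baby-forwards} is already known, the task reduces to showing that non-flatness alone forces the exponent $1/2$ with a constant $C_S$ depending only on $c,D,k,d$ and the diameter of $S$.

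The heart of the matter is the one-dimensional curved case, to which I would reduce the hypersurface case by integral geometry (intersecting $S$ with a bounded family of pencils of hyperplanes and using Fubini over the walk), after which it suffices to bound $\Pr(X\in\Gamma)$ for a definable curve $\Gamma$ with curvature bounded below, uniformly over nonzero $a_1,\dots,a_n$. Covering $\Gamma$ by $O(1)$ arcs on which it is a strictly convex graph $x_d=g(x_1)$, the event $X\in\Gamma$ forces $X_d=g(X_1)$ with $g''$ bounded away from $0$. One then does induction on $d$ by conditioning on a coordinate slice: if the projected directions controlling $X_1$ and $X_d$ are ``close to aligned'', then $(X_1,X_d)$ concentrates near a line, which meets $\Gamma$ in $O(1)$ points, so $\Pr(X\in\Gamma)=O(1/\sqrt n)$ by the one-dimensional Erd\H os--Littlewood--Offord theorem; if they are ``far from aligned'', then conditioning on $X_1=t$ still leaves $X_d$ spread over $\Omega(\sqrt n)$ values, so $\Pr(X_1=t,\,X_d=g(t))\le (C/\sqrt n)^2$ and summing over the $O(\sqrt n)$ relevant values of $t$ again gives $O(1/\sqrt n)$. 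Carrying the constant through the slicing and the induction would then yield $\Pr(X\in S)\le C_S/\sqrt n$.

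The delicate point --- and precisely where the $n^{o(1)}$ in \cref{thm:baby-forwards} comes from --- is the ``intermediate'' regime just above, where the direction controlling $X_d$ is partially but not fully aligned with the one controlling $X_1$ (and, more broadly, where the $a_i$ have wildly varying magnitudes). The proof of \cref{thm:baby-forwards} presumably handles this by a dyadic pigeonhole over the degree of alignment (or over the scales of $|a_i|$), losing $n^{o(1)}$, and the same loss appears in Costello's $n^{-1/2+o(1)}$ bound for the parabola, which rests on a weighted Szemer\'edi--Trotter estimate with a logarithmic loss. To close the gap one would want to treat all degrees of alignment at once; the natural tool is the inverse theorem of this paper: if $\Pr(X\in S)\ge K/\sqrt n$ with $K$ large, the inverse theorem should force all but few of the $a_i$ into a bounded-complexity, ``generalised arithmetic progression''-like structure adapted to $S$, whereupon one argues directly that such a structure, combined with the quantitative non-flatness, produces a line segment inside $S$ (or inside a translate of $S$ reachable by the walk) --- a contradiction. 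The main obstacle is thus twofold: (i) turning Costello's $n^{-1/2+o(1)}$ into a clean $n^{-1/2}$, uniformly over the coefficients, which seems to demand an essentially optimal incidence count with no slack; and (ii) ensuring the inverse theorem is itself quantitatively sharp enough, so that the non-flatness exponent $D$ enters only through the constant $C_S$ and never through the exponent of $n$. Both look within reach of current techniques, but neither is routine, which is why we record this only as a conjecture.
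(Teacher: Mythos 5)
You have not produced a proof, and indeed the statement you were given is \cref{conj:forwards}, which is an \emph{open conjecture} in this paper: the authors prove it only in special cases, namely under the generic intersection property of \cref{def:generic} (\cref{thm:variousS}, via the weighted K\H ov\'ari--S\'os--Tur\'an bound \cref{lem:weighted-KST} together with Hal\'asz's theorem \cref{thm:Halasz}), and up to polylogarithmic factors for semi-algebraic sets (\cref{thm:Gotsman}, where removing the $(\log n)^{C_r}$ is explicitly tied to the Gotsman--Linial conjecture, see \cref{conj:GL-case}). Your own text concedes this at the end (``which is why we record this only as a conjecture''), so what you have written is a research plan, and it should be evaluated as such.

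As a plan it has two concrete gaps. First, the quantitative non-flatness you posit at the outset --- that o-minimality supplies $c,D>0$ with every definable curve on $S$ staying at distance at least $c\,t^{D}$ from its tangent line --- is false for general o-minimal structures. \L ojasiewicz-type inequalities of this kind hold only in \emph{polynomially bounded} structures, whereas the paper's setting includes $\RR_{\exp}$ (Wilkie's theorem, Example 2): the definable curve $\{(x,e^{-1/x}):x>0\}\cup\{(0,0)\}$ contains no line segment yet is flat to infinite order at the origin, so no such $c,D$ exist and your reduction to ``non-flatness forces exponent $1/2$'' collapses at the first step. Second, in the curve case your ``far from aligned'' branch asserts that summing over ``the $O(\sqrt n)$ relevant values of $t$'' gives $O(1/\sqrt n)$; but the support of $X_1$ can have $2^{\Theta(n)}$ points, and what is actually needed is a weighted incidence/decoupling estimate over the entire (weighted) support --- precisely the weighted Szemer\'edi--Trotter or K\H ov\'ari--S\'os--Tur\'an step where Costello's $n^{o(1)}$ and this paper's losses originate. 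Your proposed fix, invoking the inverse theorem \cref{thm:baby-inverse} to ``treat all degrees of alignment at once,'' does not close this: that theorem only places most $a_i$ in a generalised arithmetic progression of polynomial size when $\Pr(X\in S)\ge n^{-C}$, and you give no mechanism by which such arithmetic structure combined with curvature produces a line segment inside $S$; indeed the paper uses its inverse theorem in exactly the opposite direction (to lift to a semi-algebraic problem and lose an $n^{o(1)}$ via Pila's covering theorem), not to obtain a clean $C_S/\sqrt n$. So the proposal neither matches the paper's partial results nor supplies the missing ingredient that makes the conjecture open.
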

We are able to prove \cref{conj:forwards} when $S$ has a certain ``generic intersection property'' (see \Cref{def:generic}), which in particular holds when $S\subset \RR^2$ is a convex or irreducible algebraic plane curve, when $S\subset \CC^2\cong \RR^4$ is an irreducible complex algebraic curve (which is of real dimension $2$), and when $S\subset \RR^3$ is the boundary of a 3-dimensional ball (see \Cref{thm:variousS}).

In addition to high-dimensional extensions, many other variants and generalisations of the Littlewood--Offord problem have been proposed and studied. One especially influential direction is the \emph{inverse theory} of the Littlewood--Offord problem, which studies the relationship between the concentration behaviour of $X$ and the arithmetic structure of $a_{1},\dots,a_{n}$. In particular, inverse Littlewood--Offord theorems were essential tools for some of the landmark results in random
matrix theory (see for example \cite{TV09a,TV09b,Tik20}).

Recall that a \emph{generalised arithmetic progression} of \emph{rank} $q$ is a subset of $\mathbb{R}^d$ of the form
$$Q_{b,v_1,\dots,v_q,m_1,\dots,m_q}:=\{b+r_1v_1+\dots+r_qv_q: r_i\in \{0,\ldots,m_i-1\} \text{ for all }i\},$$
where $b,v_1,\dots,v_q\in \mathbb{R}^d$ and $m_1,\dots,m_q\in \mathbb{N}$. Perhaps the most famous inverse theorem, proved by Tao and Vu~\cite{TV09b} (see also the sharpenings in \cite{NV11,TV10}), says that if any point probability $\Pr(X=x)$ is polynomially large (that is, at least $n^{-C}$ for any $C$), then there must be a very strong arithmetic reason: all but $n^\varepsilon$ of the vectors $a_{1},\dots,a_{n}$ must lie in a rank $q$ generalised arithmetic progression of size at most $n^{B}$ (with $q,B$ bounded in terms of $C,\varepsilon>0$).

We prove an analogous inverse theorem (\cref{thm:baby-inverse}) in the setting of \cref{thm:baby-forwards}: if $\Pr(X\in S)$ is polynomially large, then almost all of the vectors $a_1,\dots,a_n$ lie in a small, low-rank generalised arithmetic progression. This follows from the following results for arbitrary (not necessarily definable) $S\subseteq \mathbb{R}^d$.


\begin{thm}
\label{thm:relative}
Consider any subset $S\subseteq \RR^d$, and let $\xi_1,\xi_2,\ldots$ be i.i.d.\ Rademacher random variables. Suppose that there exists $m\ge 1$ such that either
\begin{enumerate}
\item[(a)] $\Pr(a_1\xi_1+\dots+a_{m+1}\xi_{m+1}\in S)<1/2$ for all nonzero $a_1,\dots,a_{m+1}\in \mathbb{R}^{d}$, or, more generally
    \item[(b)] $\Pr(a_1\xi_1+\dots+a_{m}\xi_{m}\in S')<1$ for all nonzero $a_1,\dots,a_{m}\in \mathbb{R}^{d}$ and all translates $S'$ of $S$.
\end{enumerate}
Then, for all $n$ and nonzero $d$-dimensional vectors $a_1,\dots,a_n\in \mathbb{R}^d$, we have $\Pr(X\in S)\le C_m\rho^{1/(m2^{m-1})}$, where $X=a_1\xi_1+\dots+a_n\xi_n$ and $\rho=\max_{x\in \RR^d}\Pr(X=x)$, and  $C_m$ depends only on $m$.
\end{thm}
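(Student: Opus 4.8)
The plan is to reduce the problem to a statement about a single random variable taking one particular value, so that the hypothesis (a) or (b) can be leveraged. First I would observe that condition (a) is a special case of condition (b): if $\Pr(a_1\xi_1+\dots+a_{m+1}\xi_{m+1}\in S)<1/2$ always, then conditioning on $\xi_{m+1}$ shows that $\Pr(a_1\xi_1+\dots+a_m\xi_m\in S')<1$ for every translate $S'=S-a_{m+1}$ or $S'=S+a_{m+1}$ of $S$ by a nonzero vector, and one handles translates by $0$ separately (or absorbs this case with a trivial adjustment of constants). So it suffices to work under hypothesis (b). The key probabilistic tool will be a conditioning argument of the type used in inverse Littlewood--Offord theory: split the index set $\{1,\dots,n\}$ into roughly $n/m$ blocks of size $m$; reveal all the $\xi_i$ outside a given block; and examine the conditional distribution of $X$ over the $2^m$ possible sign patterns within that block. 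Hypothesis (b) tells us that, whatever the revealed values are, the conditional distribution of $X$ is \emph{not} supported entirely inside the relevant translate of $S$ — so at least one of the $2^m$ sign patterns lands outside $S$.

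The next step is to turn ``at least one sign pattern misses $S$'' into a quantitative bound. Here I would introduce, for each block $B$, the random variable $Y_B=\sum_{i\in B}a_i\xi_i$, conditioned on the complementary signs, and compare $\Pr(X\in S)$ with $\rho=\max_x\Pr(X=x)$. The heart of the argument is a combinatorial claim roughly of the form: if $Z$ is a random variable such that for every translate $T$ of $S$ we have $\Pr(Z\in T)<1$, and $Z$ is a $\pm$-combination of $m$ vectors, then $\Pr(Z\in S-t)$ is bounded away from $1$ by an amount comparable to (some power of) the maximum atom probability of $Z$, uniformly in $t$. Iterating this over the $\sim n/m$ independent blocks — using that the blocks are independent after revealing the outside signs, and that each block independently has a chance bounded away from $1$ of keeping $X$ inside $S$ — gives exponential decay in the number of blocks, which after optimisation yields a bound of the shape $\rho^{c/m}$. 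The precise exponent $1/(m2^{m-1})$ should emerge from the fact that within a block there are $2^m$ sign patterns, of which $2^{m-1}$ come in antipodal pairs relevant to the translate structure, and one loses a further factor of $m$ from the block decomposition; I would track constants carefully through a Cauchy--Schwarz or a simple ``at least one bad outcome per block'' tensorization.

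Concretely, the cleanest route is probably: (i) fix the block decomposition and condition; (ii) for each block, let $p_B$ be the conditional probability that the block's contribution keeps $X\in S$; by hypothesis (b), $p_B\le 1-q_B$ where $q_B$ is the conditional probability mass on the ``missing'' sign pattern; (iii) relate $\sum_B \E[q_B]$ (or rather a product/independence version) to $\rho$ by noting that each $q_B$ is at least the conditional probability of $X$ equalling a specific value, and these combine across blocks; (iv) conclude $\Pr(X\in S)\le \prod_B(1-q_B)$ in expectation and bound this by $\exp(-\sum q_B)$, then trade off against $\rho$. The main obstacle I anticipate is step (iii): controlling the interaction between the $2^m$ sign patterns inside a block and the translate hypothesis, and in particular extracting the correct power of $\rho$ rather than a weaker bound — this is where the exponent $1/(m2^{m-1})$ is genuinely at stake, and it likely requires a careful inductive or tensor-power argument (applying the hypothesis not to single blocks but to $2^{m-1}$-fold or suitably amplified configurations) rather than a one-line estimate. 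A secondary technical point is ensuring the argument is uniform over all translates $S'$ simultaneously, which is needed because the relevant translate changes with the revealed outside signs; this should follow since hypothesis (b) is stated uniformly over translates, but it must be threaded through the conditioning cleanly.
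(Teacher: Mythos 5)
Your reduction of (a) to (b) is fine for nonzero translates, but the zero translate cannot be ``absorbed into constants'': to rule out $\Pr(a_1\xi_1+\dots+a_m\xi_m\in S)=1$ you need an actual device, e.g.\ applying (a) to the coefficients $a_1,\dots,a_{m-1},a_m/2,a_m/2$. The serious problem is the main argument. You partition $\{1,\dots,n\}$ into $\sim n/m$ blocks of size $m$, use (b) to say that, conditionally on the signs outside a block, at least one of the $2^m$ sign patterns inside escapes $S$, and then multiply these per-block bounds ($\Pr(X\in S)\le\prod_B(1-q_B)$ ``in expectation'', then $\exp(-\sum_B q_B)$). That factorization is invalid: $X\in S$ is a single event about the total sum, not an intersection of per-block events, and conditional bounds each taken given the \emph{entire} complement of a block cannot be multiplied; the one-block conditioning legitimately yields only $\Pr(X\in S)\le 1-2^{-m}$. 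Moreover the scheme aims at decay exponential in $n$, which is both false and not what is wanted: take $S=\{0\}$, $d=1$, $m=1$, all $a_i=1$; hypothesis (b) holds, yet $\Pr(X\in S)\asymp n^{-1/2}\asymp\rho$. The bound must be expressed through $\rho$, and your block scheme has no mechanism by which $\rho$ enters; the obstacle you flag at step (iii) is not a technical refinement away but the absence of the key idea.

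The missing idea is to use (b) combinatorially rather than block-by-block probabilistically. Hypothesis (b) is equivalent to saying that $S$ contains no Minkowski sum $A_1+\dots+A_m$ with each $|A_i|=2$. One then splits $\{1,\dots,n\}$ into just $m$ parts (not $n/m$ blocks), chosen so that each partial sum $X_j=\sum_{i\in I_j}a_i\xi_i$ has maximum atom probability at most $2\rho^{1/m}$; this is possible by a discrete-continuity argument along prefixes, since prefix concentration probabilities decrease by at most a factor of $2$ per step and the product of the parts' concentrations is at most $\rho$ (this is \cref{lem:part-concentration}). Form the $m$-partite sum hypergraph on the supports of $X_1,\dots,X_m$, with an edge when the chosen values sum into $S$ and with vertex weights the atom probabilities, so that $\Pr(X\in S)$ is the total edge weight. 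By (b) this hypergraph is $K^{(m)}_{2,\dots,2}$-free, and a weighted K\H ov\'ari--S\'os--Tur\'an theorem, applied inductively on the uniformity with $t=2$ at each step, bounds the weight by $C_m\lambda^{1/2^{m-1}}$ with $\lambda=2\rho^{1/m}$, giving exactly $C_m\rho^{1/(m2^{m-1})}$. The exponent thus comes from iterating the bipartite $t=2$ estimate $m-1$ times on top of the $m$-fold split of $\rho$, not from counting sign patterns inside blocks.
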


\begin{proof}[Proof that assumption (a) implies assumption (b)]
Consider any translate $S'=S-b$ of $S$. If $b=0$, we note that $\Pr(a_1\xi_1+\ldots+a_{m-1}\xi_{m-1}+(a_m/2)\xi_{m}+(a_m/2)\xi_{m+1}\in S)<1/2$ implies that $\Pr(a_1\xi_1+\ldots+a_{m}\xi_{m}\in S')<1$, and if $b\ne 0$ we note that $\Pr(a_1\xi_1+\ldots+a_m\xi_{m}+b\xi_{m+1}\in S)<1/2$ implies that $\Pr(a_1\xi_1+\ldots+a_{m}\xi_{m}\in S')<1$.
\end{proof}
In other words, \Cref{thm:relative} says there is a polynomial relationship between probabilities of the form $\Pr(X\in S)$ and the maximum point concentration probability $\max_{x\in \RR^d}\Pr(X=x)$, whenever we have a nontrivial uniform bound on $\Pr(X\in S)$ for any individual $n$.

\Cref{thm:baby-forwards} and \Cref{thm:relative} together imply that whenever $S\subseteq \RR^d$ is a definable set not containing a line segment, then $\Pr(X\in S)=O(\rho^{c_S})$ for some $c_S$ depending on $S$. One can then combine this statement with the Tao--Vu inverse theorem (or any other inverse theorem for the Littlewood--Offord problem). We remark that the logical structure of the paper does not actually proceed in quite this way: we first directly prove an inequality (\cref{thm:pointvsS}) of the form $\Pr(X\in S)=O(\rho^{c_S})$, where $c_S$ only depends on $S$ via its dimension. We then use this inequality, in addition to the Tao--Vu inverse theorem, to deduce an inverse theorem (\cref{thm:baby-inverse}) in our setting, and then use this inverse theorem in the proof of \Cref{thm:baby-forwards}.


We also remark that the condition in \Cref{thm:relative} that $\Pr(a_1\xi_1+\dots+a_{m}\xi_{m}\in S')<1$ (for all translates $S'$ of $S$, and all nonzero $a_1,\dots,a_m\in \RR^d$) is equivalent to the condition that $S$ does not include a generalised arithmetic progression $Q_{b,v_1,\dots,v_m,2,\dots,2}$ for nonzero $v_1,\dots,v_m\in \RR^d$, which is in turn equivalent to the condition that $S$ does not include a Minkowski sum $A_1+\dots+A_m$ with each $|A_i|=2$. As it turns out, the interaction between Minkowski sums and the geometry of $S$ will play a crucial role in what follows.

In the next few subsections we describe the ideas in the proofs of \cref{thm:baby-forwards} and \cref{thm:relative}, and present a few additional related results.

\subsection{Polynomial anti-concentration and semi-algebraic sets}\label{subsec:polynomial}
Recall that $S\subseteq \mathbb{R}^d$ is a semi-algebraic set if it can be expressed as the set of points satisfying a Boolean combination of equations $f=0$ and inequalities $g>0$ for various polynomials $f,g$. We start by introducing a standard notion of complexity for semi-algebraic sets (this is not quite the same as the o-minimal notion of complexity mentioned previously, but as we will see later it is closely related).

\begin{defn}\label{def:description-complexity}
Say that a semi-algebraic subset $S\subset \mathbb{R}^d$ has \emph{description complexity} at most $r$ if it can be written as a Boolean combination of at most $r$ sets of the form $\{x\in \RR^d:f(x)=0\}$ and $\{x\in \RR^d:f(x)>0\}$, for polynomials $f$ of degree at most $r$.
\end{defn}

We next discuss how to prove \Cref{thm:baby-forwards} for semi-algebraic sets, using some of the ideas that have been developed in the study of polynomial anti-concentration. These ideas will also play a key role in the proof of the full (o-minimal) version of \Cref{thm:baby-forwards}.

The \emph{polynomial Littlewood--Offord problem} is concerned with upper bounds on probabilities of the form $\Pr(p(\xi_1,\dots,\xi_n)=x)$, where $p$ is an $n$-variable polynomial satisfying certain conditions (and, as always, $\xi_1,\dots,\xi_n$ are i.i.d.\ Rademacher random variables). This topic has its roots in random matrix theory (see \cite{CTV06,Ngu12}), but there are also a number of connections between the polynomial Littlewood--Offord problem and the theory of Boolean functions (see \cite{MNV16,RV13}). The strongest and most general result in this area is due to Meka, Nguyen and Vu~\cite{MNV16}, establishing near-optimal anti-concentration estimates in terms of a certain combinatorial parameter of the polynomial $p$ (see also \cite{FKS}). 

As observed by Kane (see the discussion in \cite{MNV16}), the result of Meka, Nguyen and Vu can actually be deduced from estimates related to the \emph{Gotsman--Linial conjecture}. Recall that a \emph{degree-$r$ polynomial threshold function} $f:\{-1,1\}^n\to \{-1,1\}$ is a Boolean function of the form $\operatorname{sgn}(p(x))$ for some polynomial $p:\mathbb{R}^d\to \mathbb{R}$ of degree at most $r$, where $$\operatorname{sgn}(y)=\begin{cases}1 & y>0\\-1 & y\le 0.\end{cases}$$ The $i$-th \emph{influence} $\Inf_i(f)$ of a Boolean function $f$ is the probability that, at a random evaluation point $(\xi_1,\dots,\xi_n)\in \{-1,1\}^n$, changing the value of $\xi_i$ would change the value of $f(\xi_1,\dots,\xi_n)$. The \emph{total influence} or \emph{average sensitivity} of $f$ is $$\operatorname{AS}(f)=\Inf_1(f)+\dots+\Inf_n(f).$$
Then, the Gotsman--Linial conjecture is that\footnote{Gotsman and Linial actually conjectured a stronger bound, which has since been disproved; see \cite{Cha18,KMW}.} for any degree-$r$ polynomial threshold function $f$, we have $\operatorname{AS}(f)=O(r\sqrt n)$.
Kane~\cite{Kan14} proved the weaker result $\operatorname{AS}(f)\le C_r (\log n)^{C_r}\sqrt{n}$ for constant $C_r$ depending only on $r$ (see \Cref{thm:Kane-GL}), from which he was able to deduce a strong bound on the polynomial Littlewood--Offord problem (see the discussion in \cite{MNV16}). We are similarly able to deduce the following quantitative version of \Cref{thm:baby-forwards} in the case of semi-algebraic sets.

\begin{thm}[cf.\ \Cref{thm:baby-forwards}]
\label{thm:Gotsman}
Suppose $S\subset \mathbb{R}^d$ is a $k$-dimensional semi-algebraic set of description complexity $r$ not containing a line segment. Consider nonzero $d$-dimensional vectors $a_{1},\dots,a_{n}\in\RR^{d}$, and write
$X=a_{1}\xi_{1}+\dots+a_{n}\xi_{n}$, where $\xi_{1},\dots,\xi_{n}$
are i.i.d.\ Rademacher random variables. Then there are $C_{d,r}$ depending only on $d,r$, and $C_r$ depending only on $r$, such that $\Pr(X\in S)\le C_{d,r}(\log n)^{C_{r}}/\sqrt n$.
\end{thm}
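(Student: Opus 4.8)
The plan is to reduce the statement to Kane's bound on the average sensitivity of polynomial threshold functions. The key observation is that membership in a semi-algebraic set $S$ of bounded description complexity can be detected by a bounded number of polynomial sign conditions, and that the event $\{X\in S\}$ translates, after substituting $X = \sum a_i\xi_i$, into a Boolean combination of events $\{p_j(\xi_1,\dots,\xi_n) \gtrless 0\}$ where each $p_j$ is a polynomial in $\xi_1,\dots,\xi_n$ of degree bounded in terms of $r$ (since each $p_j$ is obtained from a degree-$\le r$ polynomial in $x\in\RR^d$ by the linear substitution $x\mapsto\sum a_i\xi_i$). So we are left to bound $\Pr((\xi_1,\dots,\xi_n)\in T)$ where $T\subseteq\{-1,1\}^n$ is the solution set of a bounded Boolean combination of bounded-degree polynomial threshold conditions.

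The next step is the heart of the argument: to show that such a $T$ has probability $n^{-1/2+o(1)}$, provided $S$ contains no line segment. The point is that if $\Pr((\xi_1,\dots,\xi_n)\in T)$ is large, then for many pairs $(\xi,\xi')$ differing in a single coordinate $i$, both lie in $T$, i.e.\ both $X(\xi)$ and $X(\xi') = X(\xi) \pm 2a_i$ lie in $S$. I would make this quantitative by a second-moment / edge-counting argument: the number of edges of the hypercube with both endpoints in $T$ is at least (roughly) $|T|\cdot(n - \operatorname{AS}(\mathbf 1_T))$ — more precisely, one encodes $\mathbf 1_T$ (or rather the relevant sign functions) as $\pm1$-valued functions, applies Kane's theorem (\Cref{thm:Kane-GL}) to each of the bounded number of degree-$\le r$ threshold functions whose Boolean combination defines $T$, and concludes that all but $O_r((\log n)^{O_r(1)}\sqrt n / |T|) \cdot |T| \cdot$ (correction terms) of the coordinates $i$, and most points $\xi\in T$, have the property that flipping $\xi_i$ keeps us in $T$. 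If $|T| = \Pr(X\in S)\,2^n$ were much larger than $(\log n)^{C_r}\sqrt n \cdot 2^n/n$, this would produce a point $x = X(\xi)\in S$ together with at least $m+1$ indices $i$ (for any fixed $m$) such that $x \pm 2a_i \in S$ for each of them; iterating the flips, one finds $2^{m}$ points $x + \sum_{i\in I}\pm 2a_i$, $I\subseteq\{i_1,\dots,i_m\}$, all lying in $S$. Thus $S$ contains a Minkowski sum $A_1+\dots+A_m$ of $m$ two-element sets built from nonzero vectors $2a_{i_1},\dots,2a_{i_m}$. Now I invoke a Ramsey-type or pigeonhole compactness fact about semi-algebraic sets of bounded complexity: a semi-algebraic set of description complexity $r$ that contains arbitrarily ``deep'' such Minkowski sums (for $m = m(r)$ large enough) must contain a line segment; this is exactly the kind of statement made available by the discussion following \Cref{thm:relative} together with a quantitative o-minimal/semi-algebraic cell-decomposition bound (one shows that if $S$ contains $A_1 + \dots + A_m$ with $m$ large relative to $r$ and the $a_{i}$ nonzero, then some coordinate direction survives into a genuine segment inside $S$, using that a definable family of finite sets has bounded cardinality). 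Contradiction.

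To assemble this cleanly I would in fact run the argument through \Cref{thm:relative}: choose $m = m(d,r)$ so large that no semi-algebraic set of description complexity $r$ without a line segment can contain a Minkowski sum of $m$ two-element sets of nonzero vectors (this $m$ exists by bounded-complexity semi-algebraic considerations — a finite-VC / cell-decomposition compactness statement), so hypothesis (b) of \Cref{thm:relative} is satisfied with this $m$. Then \Cref{thm:relative} gives $\Pr(X\in S) \le C_m \rho^{1/(m2^{m-1})}$ where $\rho = \max_x \Pr(X=x)$. Separately, Kane's theorem yields a bound of the shape $\rho' \le C_r(\log n)^{C_r}/\sqrt n$ not directly for $\rho$ but for the quantity we actually care about; more to the point, one applies Kane's theorem \emph{directly} to the threshold functions defining $\{X\in S\}$ to get $\Pr(X\in S)\le C_{d,r}(\log n)^{C_r}/\sqrt n$ in one shot, and \Cref{thm:relative} is the conceptual reason this is the right bound. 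Concretely: writing $\mathbf 1_{X\in S}$ as a Boolean combination of $O(r)$ functions $f_j = \operatorname{sgn}(p_j(\xi))$ with $\deg p_j \le r$, the set $T=\{X\in S\}$ has boundary (in the hypercube edge sense) controlled by $\sum_j \operatorname{AS}(f_j) = O_r((\log n)^{C_r}\sqrt n)$, while the edge-isoperimetric inequality on the cube forces $|T|/2^n$ to be small unless $T$ is ``thick'' in a direction — and thickness in a direction is precisely a Minkowski sum / line-segment obstruction, excluded by hypothesis.

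The main obstacle, and the step I expect to need the most care, is the passage from ``$\Pr(X\in S)$ large $\Rightarrow$ $S$ contains a long Minkowski sum of two-element sets $\Rightarrow$ $S$ contains a line segment.'' The first implication needs a robust version of the edge-counting argument that works for a Boolean combination of threshold functions (not a single one) and that produces the Minkowski structure rather than just two nearby points; the technical enemy is that the exceptional sets from Kane's theorem for different $j$ must be combined without the errors compounding. The second implication — that bounded-complexity semi-algebraic sets which are ``line-segment-free'' cannot contain unboundedly deep Minkowski sums of nonzero two-element sets — should follow from a cell-decomposition argument (a definable family of subsets of $S$ that are finite has uniformly bounded size, so after enough iterations two of the $2^m$ translates must be joinable into a segment), but making the dependence of the threshold $m$ on $(d,r)$ effective, while only costing constants $C_{d,r}$ and $C_r$ of the stated form, is where the bookkeeping is delicate.
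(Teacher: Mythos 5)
Your high-level ingredients are the same as the paper's (Kane's bound \Cref{thm:Kane-GL} applied to the boundedly many threshold functions in the description of $S$, which is exactly \Cref{lem:semialgsens}), but the step that actually converts an average-sensitivity bound into a bound on $\Pr(X\in S)$ is missing, and the places where you try to supply it don't work. Kane's theorem ``in one shot'' does not bound $\Pr(X\in S)$: small total influence alone is perfectly compatible with $\Pr(X\in S)$ being a constant, and the edge-isoperimetric inequality you invoke only gives $\alpha\log(1/\alpha)\lesssim \sqrt n(\log n)^{C_r}$ for $\alpha=\Pr(X\in S)$, which is vacuous. Likewise, routing through \Cref{thm:relative} only yields $\Pr(X\in S)\le C_m\rho^{1/(m2^{m-1})}$, far weaker than $(\log n)^{C_r}/\sqrt n$. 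The paper's mechanism is a boosting argument: a weak bound (\Cref{lem:Algebraic-weak}) asserting that for $n\ge N_{r,d}$ one always has $\Pr(X\in S)<1$; applied inside each block of $N_{r,d}$ coordinates (conditioning on the rest), it shows every $\xi$ with $X(\xi)\in S$ can be walked out of $S$ by flipping bits within that block, whence $\Pr(X\in S)\le 2^{N}\sum_{i\in\mathrm{block}}\Inf_i(1_S)$, and summing over $n/N$ disjoint blocks produces the crucial factor $n/N$ that turns $\operatorname{AS}(1_S)\le C_r\sqrt n(\log n)^{C_r}$ into the claimed bound. Your corresponding step --- a point with $m+1$ insensitive coordinates, then ``iterating the flips'' to get a $2^m$ subcube inside $T$ --- is a non sequitur as written: single-flip insensitivity at one point does not iterate, since after one flip the insensitive coordinates can change completely (you flag this as the main obstacle but do not close it). A correct repair in your spirit does exist --- take a uniform $\xi$ and $m$ uniform coordinates and union-bound over the $m2^{m-1}$ edges of the random subcube, giving $\Pr(X\in S)\le m2^{m-1}\operatorname{AS}(1_S)/n$ unless $S$ contains a Minkowski sum of $m$ two-element sets --- but some such argument must actually be made.

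The second gap is the finiteness ingredient itself: that there is $m=m(d,r)$ (equivalently $N_{r,d}$) such that a description-complexity-$r$ semi-algebraic set with no line segment contains no Minkowski sum $A_1+\dots+A_m$ with all $|A_i|=2$. You assert this via ``finite-VC / cell-decomposition compactness,'' but a soft compactness argument gives neither an effective $m$ nor, more importantly, uniformity over all $S$ of the given complexity, and the implication ``arbitrarily deep Minkowski sums $\Rightarrow$ line segment'' is not proved. In the paper this is precisely \Cref{lem:Algebraic-weak}, which rests on the self-irreducibility machinery (\Cref{prop:o-minimal-complexity} together with \Cref{thm:hypercompabs}), or alternatively on B\'ezout's theorem and irreducible components in the algebraic case; some argument of this kind, controlling how translates of $S$ intersect, is genuinely required. (By contrast, your worry about ``exceptional sets from Kane's theorem compounding'' across the Boolean combination is a non-issue: total influence is simply subadditive over the constituent threshold functions, as in \Cref{lem:semialgsens}.)
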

We remark that a full resolution of the Gotsman--Linial conjecture would remove the $(\log n)^{C_r}$ factor, implying \cref{conj:forwards} for semi-algebraic sets not including a line segment. In fact, there is a natural special case of the Gotsman--Linial conjecture that would already suffice, and does not appear to have previously been studied (see \cref{sec:concluding}).

\subsection{The geometry and additive combinatorics of incidence graphs}\label{subsec:graphs}
To prove \Cref{thm:pointvsS}, and to complete the proof of \Cref{thm:baby-forwards} for general sets definable with respect to an o-minimal structure, we study the geometry of $S\subset\mathbb{R}^d$ via the additive combinatorics of finite subsets of $S$. In this subsection we explain how to encode certain combinatorial information in a weighted graph, and prove special cases of \cref{conj:forwards} by considering forbidden subgraphs of this weighted graph. This will serve as a warm-up to the next few subsections, where we extend these ideas to hypergraphs.

For a point set $\mathcal F\subseteq\mathbb{R}^d$ and a family of geometric objects $\mathcal G$ in $\mathbb{R}^d$, the \emph{bipartite incidence graph} $G(\mathcal F,\mathcal G)$ is the graph with parts $\mathcal{F}$ and $\mathcal{G}$, with an edge between $f\in \mathcal{F}$ and $g\in \mathcal{G}$ whenever $f\in g$. This notion is fundamental in incidence geometry: one important way of proving bounds on the number of incidences between $\mathcal{F}$ and $\mathcal{G}$ is to study the subgraphs that $G(\mathcal F,\mathcal G)$ cannot contain, due to the geometry of $\mathcal{F},\mathcal{G}$, and to then apply a theorem from extremal graph theory. For example, one can obtain a non-trivial upper bound for the Erd{\H o}s unit distance problem by taking $\mathcal{F}\subset \mathbb{R}^2$ a set of points, and $\mathcal{G}$ the set of unit circles centred at these points. Then $G(\mathcal F,\mathcal G)$ contains no $K_{2,3}$, and the number of edges in $G(\mathcal F,\mathcal G)$ can be upper-bounded with the K\H ov\'ari--S\'os--Tur\'an theorem (see \cite[Chapter~4.5]{M02}).

To study probabilities of the form $\Pr(X\in S)$ using incidence graphs, we consider a partition $\{1,\dots,n\}=I_1\sqcup I_2$, and defining the random variables $X_i=\sum_{j\in I_i}a_j\xi_j$ (where $\xi_1,\dots,\xi_n$ are i.i.d.\ Rademacher random variables), we let $V_i$ be the support of $X_i$. Then we consider the bipartite graph $G$ with parts $V_1,V_2$
and an edge between $x\in V_1$ and $y\in V_2$ whenever $x+y\in S$. Note that $G$ can be interpreted as the bipartite incidence graph $G(\mathcal F,\mathcal G)$ for $\mathcal{F}=V_1$ and $\mathcal{G}=\{S-x:x\in V_2\}$. By assigning each vertex $x\in V_i$ the weight $w(x):=\Pr(X_i=x)$, we have
$$\Pr(X\in S)=\Pr(X_1+X_2\in S)=\sum_{xy\in E(G)}w(x)w(y).$$
Now, under certain geometric conditions on $S$, we can show that $G$ is $K_{a,b}$-free for some $a,b$. We can then bound $\Pr(X\in S)$ by proving and applying a weighted version of the K\H ov\'ari--S\'os--Tur\'an theorem (\cref{lem:weighted-KST}) to $G$. Specifically, we note that $G$ is $K_{a,b}$-free if either of the following two equivalent conditions hold.
\begin{enumerate}
    \item (Additive combinatorics interpretation). For any $A,B\subset \mathbb{R}^d$ with $|A|=a$, $|B|=b$, we have $A+B\not \subseteq S$.
    \item (Incidence geometry interpretation). For any $A\subset \mathbb{R}^d$ with $|A|=a$, we have $\left|\bigcap_{a\in A}(S-a)\right|<b$.
\end{enumerate}
As an application of these ideas, suppose that $S\subset \mathbb{R}^d$ has dimension $\dim(S)\le d-1$. Then, heuristically, we would expect that $d$ generic translates $S-x_1,S-x_2,\dots,S-x_d$ intersect in a $0$-dimensional set, i.e., finitely many points. If there is a constant $D$ such that for \emph{every} choice of translates, we have $|(S-x_1)\cap\dots\cap(S-x_d)|\le D$, then $G$ is $K_{d,D+1}$-free. To prove an optimal anti-concentration bound for $\Pr(X\in S)$, we will need a version of this condition which is inherited by intersections with affine subspaces.

\begin{defn}\label{def:generic}%
Say that $S\subseteq \mathbb{R}^d$ satisfies the \emph{generic intersection property} if there is $D\in \NN$ such that the following holds. For any affine subspace $\Lambda\subseteq \mathbb{R}^d$ (including $\Lambda=\mathbb{R}^d$), we have
$$\left|\bigcap_{i=1}^{\dim \Lambda}\left((S\cap \Lambda)-x_i\right)\right|\le D$$
for any distinct $x_1,\dots,x_{\dim \Lambda}\in \mathbb{R}^d$.
\end{defn}

Using the strategy described above together with a strong Littlewood--Offord-type theorem due to H\'alasz~\cite{Hal77} (see \cref{thm:Halasz}), we are able to prove \cref{conj:forwards} for sets $S$ satisfying the generic intersection property.
\begin{thm}
\label{thm:variousS}
Suppose that $S\subset \mathbb{R}^d$ has the generic intersection property. Consider nonzero $d$-dimensional vectors $a_{1},\dots,a_{n}\in\RR^{d}$, and write
$X=a_{1}\xi_{1}+\dots+a_{n}\xi_{n}$, where $\xi_{1},\dots,\xi_{n}$
are i.i.d.\ Rademacher random variables. Then there is $C_S$ depending only on $S$ such that:
\begin{compactenum}
    \item $\Pr(X\in S)\le C_Sn^{-1/2}$, and
    \item $\Pr(X\in S)\le C_S\rho^{1/(d+1)}$, where $\rho=\max_{x\in\mathbb{R}^d}\Pr(X=x)$.
\end{compactenum}
\end{thm}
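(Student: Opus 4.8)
The plan is to run the incidence‑graph strategy of \cref{subsec:graphs} and exploit that the generic intersection property (\cref{def:generic}) forbids a complete bipartite subgraph. Concretely: fix a partition $[n]=I_1\sqcup I_2$, set $X_i=\sum_{j\in I_i}a_j\xi_j$, let $V_i$ be the support of $X_i$, weight $x\in V_i$ by $w(x)=\Pr(X_i=x)$, and let $G$ be the bipartite graph on $V_1\sqcup V_2$ with $x\sim y$ iff $x+y\in S$, so that $\Pr(X\in S)=\sum_{xy\in E(G)}w(x)w(y)$. Applying \cref{def:generic} with $\Lambda=\RR^d$, any $d$ distinct points have at most $D$ common translates of $S$ through them, so $G$ contains no $K_{d,D+1}$. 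It then remains to bound $\sum_{xy\in E(G)}w(x)w(y)$ for a $K_{d,D+1}$‑free $G$ whose weights are the point‑mass functions of $X_1,X_2$; the two conclusions differ only in how much one exploits about those functions.

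For $\Pr(X\in S)\le C_Sn^{-1/2}$ I would combine a weighted K\H ov\'ari--S\'os--Tur\'an estimate (\cref{lem:weighted-KST}) with H\'alasz's theorem (\cref{thm:Halasz}) and an induction on $d$. The $K_{d,D+1}$‑freeness yields — e.g.\ by sampling $d$ i.i.d.\ copies of one side, which are distinct except with probability $O(\max_x\Pr(X_1=x))$ and whose common neighbourhood is then pinned to $\le D$ vertices — an inequality of the shape $\Pr(X\in S)^d\le C_{d,D}\max\{\max_x\Pr(X_1=x),\max_y\Pr(X_2=y)\}$, reducing matters to anti‑concentration of $X_i$. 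If no proper subspace contains half of $a_1,\dots,a_n$, a greedy flag $0\subset W_1\subset\dots\subset W_d=\RR^d$ has every $W_k$ transverse to at least $n/2$ of the $a_i$, and a robust multidimensional form of \cref{thm:Halasz} gives $\max_x\Pr(X_i=x)=O(n^{-d/2})$ for an appropriate split, whence $\Pr(X\in S)=O(n^{-1/2})$. If instead some proper — hence some hyperplane — subspace $W$ contains $\ge n/2$ of the $a_i$, write $X=X_W+X'$ with $X_W$ the sum of the terms in $W$ (independent of $X'$); then $\Pr(X\in S)=\E_{X'}[\Pr(X_W\in(S\cap\Lambda)-x')]$ for an affine subspace $\Lambda=W+X'$ of dimension $d-1$, and $S\cap\Lambda$ inherits the generic intersection property with the same $D$ inside $W\cong\RR^{d-1}$ (precisely why \cref{def:generic} quantifies over all affine subspaces), so the inductive hypothesis applied to $X_W$ (a sum of $\ge n/2$ nonzero vectors) bounds $\Pr(X\in S)$ by $O(n^{-1/2})$. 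The base case $d=1$ is immediate: there $|S|\le D$, and one invokes Erd\H os--Littlewood--Offord.

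For $\Pr(X\in S)\le C_S\rho^{1/(d+1)}$ one runs the same two‑case scheme while tracking $\rho=\max_x\Pr(X=x)$. In the non‑degenerate case one again uses \cref{lem:weighted-KST} fed by \cref{thm:Halasz}, now choosing the partition point so that both $\max_x\Pr(X_1=x)$ and $\max_y\Pr(X_2=y)$ are governed by $\rho$ (their product is at most $\rho$, and moving a single index across the partition changes each by at most a constant factor), and extracting the full additive structure of the weights from \cref{thm:Halasz}, not merely their suprema. In the degenerate case one slices along a hyperplane $W$ capturing $\ge n/2$ of the $a_i$ and inducts on $d$ via the decomposition $X=X_W+X'$, balancing the two bounds $\Pr(X\in S)\le C_{D,d-1}(\max_y\Pr(X_W=y))^{1/d}$ (from the inductive hypothesis in $\RR^{d-1}$, good when $X_W$ is spread) against an estimate for $X'$, and relating the relevant point‑mass maxima back to $\rho$ — together with a short reduction observing that once $\rho$ is not extremely small the first part's $O(n^{-1/2})$ already implies the claim.

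The step I expect to be the main obstacle is the interface between \cref{thm:Halasz} and the dimension‑reduction induction. The dichotomy "some proper subspace captures half the $a_i$" versus "none does" is the correct one in the point‑mass regime — being merely \emph{near} a subspace with a consistent offset still contributes the missing $n^{-1/2}$ in the transverse direction — but one must make it quantitative, in particular ensuring a single split of $[n]$ is simultaneously robust and delivers the $O(n^{-d/2})$ anti‑concentration that \cref{thm:Halasz} provides, and handling the nested near‑degenerate cases in the second statement. A secondary difficulty is the sharpness of the exponents: the crude use of \cref{lem:weighted-KST} loses a factor, so reaching $n^{-1/2}$ in place of $n^{-1/(2d)}$, and the exponent $1/(d+1)$ in place of $1/(2d)$, is exactly where \cref{thm:Halasz}, rather than an off‑the‑shelf incidence bound, does the work.
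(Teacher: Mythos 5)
Your plan for part 1 is essentially the paper's argument (bipartite sum graph, $K_{d,D+1}$-freeness from \cref{def:generic} with $\Lambda=\RR^d$, weighted K\H ov\'ari--S\'os--Tur\'an, Hal\'asz, and induction through affine slices), and the obstacle you flag is real but repairable by re-ordering the dichotomy: instead of casing on whether some proper subspace contains half of \emph{all} the $a_i$ (which indeed does not guarantee that each half of a split robustly spans), fix an equipartition $I_1\sqcup I_2$ and case on whether $\rho((a_i)_{i\in I_j})\le C_d\lfloor n/2\rfloor^{-d/2}$ for both $j$. If yes, the weighted KST bound applies directly; if not, the contrapositive of \cref{thm:Halasz} applied to the offending half produces a hyperplane $W$ containing at least $\lfloor n/2\rfloor/2$ of the coefficients (a quarter of them, not half, but that is enough), and one conditions on the sum of the remaining coefficients and applies the inductive hypothesis to $(S-X_2)\cap W$ inside $W\cong\RR^{d-1}$. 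No ``robust'' or ``greedy flag'' version of Hal\'asz is needed.

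The genuine gap is in part 2. The exponent $1/(d+1)$ does not come from Hal\'asz (which plays no role in this part), and your two-case scheme cannot deliver it as sketched: in the degenerate branch you propose the bound $\Pr(X\in S)\le C(\max_y\Pr(X_W=y))^{1/d}$ from the inductive hypothesis, but $\max_y\Pr(X_W=y)\ge\rho$ (point concentration only increases when passing to a sub-collection of coefficients, \cref{fact:concentration-monotone}), and it can be close to $1$ while $\rho$ is tiny, so ``relating the point-mass maxima back to $\rho$'' is exactly what fails. The missing idea is an \emph{unbalanced} splitting: by \cref{lem:part-concentration} (proved via \cref{fact:concentration-product,fact:concentration-lipschitz} and discrete continuity) there is a single partition with $\max_x\Pr(X_1=x)\le 2\rho^{1/(d+1)}$ and $\max_y\Pr(X_2=y)\le 2\rho^{d/(d+1)}$; feeding this into \cref{prop:rel-GIP} (i.e.\ the additive form $q^{1/t}+t\rho$ of \cref{lem:weighted-KST}) with $t=d$ and $q\le 2D\rho^{d/(d+1)}$ gives $\Pr(X\in S)=O(\rho^{1/(d+1)})$ in one step, with no case analysis, no induction, and using only the $\Lambda=\RR^d$ case of \cref{def:generic}. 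Note also that the decoupling-type inequality you derived, of the shape $\Pr(X\in S)^d\le C\max\{\max_x\Pr(X_1=x),\max_y\Pr(X_2=y)\}$, is insufficient here even with the unbalanced split, since the large side $\rho^{1/(d+1)}$ would then be raised to the power $1/d$; this is precisely the caveat in \cref{rem:decoupling}.
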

The first part of \cref{thm:variousS} is best possible, but the second part may not be. Some interesting examples of sets $S$ satisfying the generic intersection property are strictly convex plane curves, irreducible complex algebraic curves other than lines, and 2-dimensional spheres. Also, note that if $S$ can be represented as a finite union of sets satisfying \cref{conj:forwards}, then $S$ satisfies \cref{conj:forwards} itself. So, \cref{thm:variousS} actually implies that \cref{conj:forwards} holds for plane curves with any finite number of inflection points and vertical tangent lines. 

As a final remark, we note that there are very close connections between our weighted K\H ov\'ari--S\'os--Tur\'an theorem (\cref{lem:weighted-KST}) and some other inequalities previously used in connection with the Littlewood--Offord problem. For example, in \cite{Cos13}, a weighted version of the closely-related Szemer\'edi--Trotter theorem~\cite{IKRT06} was applied to study the quadratic Littlewood--Offord problem. There is also a close connection to \emph{decoupling inequalities} (see for example \cite{Cos13,CTV06,RV13}) commonly applied to the polynomial Littlewood--Offord problem, which are themselves closely related to \emph{Sidorenko's conjecture} (see for example \cite{Sid93}) in extremal graph theory. See \cref{rem:decoupling} for further discussion.

\subsection{Sum hypergraphs, and an application to points in convex position}\label{subsec:hypergraphs}
In \cref{subsec:graphs}, we explained how to prove bounds on probabilities of the form $\Pr(X\in S)$ under the assumption that $S$ does not include a sumset $A+B$, for small sets $A,B$. In this subsection we explain how to generalise this to prove \cref{thm:relative}, giving bounds on probabilities of the form $\Pr(X\in S)$ under the assumption that $S$ does not include an iterated sumset $A_1+\dots+A_k$. The ideas in this subsection will be extended further in the next subsection, when we begin to discuss the geometry of sets definable with respect to an o-minimal structure.

We remark that some other natural questions on the additive combinatorics of finite subsets of $S\subset \mathbb{R}^d$ can be phrased in terms of the intersection of $S$ with sumsets $A_1+\dots+A_k$. For example, if $k=d$ and $A_i$ consists of multiples of the $i$th standard basis vector, then $A_1+\dots+A_k$ is a \emph{combinatorial box} (which may also be expressed in the form $X_1\times \dots \times X_d\subseteq \mathbb{R}^d$). The \emph{Elekes--R\'onyai--Szab\'o problem}~\cite{ER00,ES12} asks what structure is imposed on $S$ if $S\subseteq \mathbb{R}^d$ is the zero locus of a single polynomial and has large intersection with a combinatorial box $X_1\times \dots \times X_d\subset \mathbb{R}^d$. See \cite{RSZ16,RSZ18,RSS16,RS20} for some recent work on this problem.
If we specialise further, requiring $X_i=[a_i,b_i]\cap \frac{1}{m}\mathbb{Z}$ for some fixed $m$, then the size of the intersection with $S$ determines the number of rational points of $S$ with denominator $m$ in a box $\prod_i[a_i,b_i]$. This direction of study has been closely connected to recent developments in \emph{o-minimal geometry} (see \cref{subsec:o-minimal}).

As an example of a natural geometric condition which precludes the inclusion of an iterated sumset, it is known that if a set of points in $\RR^d$ lies in convex position (meaning no point is a convex combination of the others), then it cannot contain a sumset of the form $A_1+\dots+A_{d+1}$, where each $|A_i|\ge 2$ (see \cref{lem:Zonotope-bound-weak}).

\begin{defn}\label{def:incidence-hypergraph}
For $S\subset \mathbb{R}^d$ and subsets $V_1,\dots,V_k\subset \mathbb{R}^d$, we define the $k$-uniform $k$-partite \emph{sum hypergraph} $H_S(V_1,\dots,V_k)$ to have vertex sets $V_1,\dots,V_k$, and an edge $v_1\dots v_k$ whenever $v_1+\dots+v_k\in S$.

For $a_1,\dots,a_n\in \RR^d$, and a partition $\{1,\dots,n\}=I_1\sqcup \dots \sqcup I_k$, we define the random variables $X_i=\sum_{j\in I_i}a_j\xi_j$, where $\xi_1,\dots,\xi_n$ are i.i.d.\ Rademacher random variables, and let $V_i$ be the support of $X_i$. Then we define $H_{X,S}(I_1,\dots,I_k)$ to be the hypergraph $H_S(V_1,\dots,V_k)$, with associated vertex-weight function $w$ given by $w(x):=\Pr\left(X_i=x\right)$ for $x\in V_i$.
\end{defn}

Note that with this definition, we have
$$\Pr(X\in S)=\Pr(X_1+\dots+X_k\in S)=\sum_{x_1\dots x_k\in E(H_{X,S}(I_1,\dots,I_k))}w(x_1)\dots w(x_k).$$
Erd{\H o}s~\cite{Erd64} proved a hypergraph extension of the K\H ov\'ari--S\'os--Tur\'an theorem, which in particular gives an upper bound on the number of edges in a $k$-uniform hypergraph which is $K^{(k)}_{2,\dots,2}$-free. We prove a weighted version of this (actually a slight generalisation; see \cref{cor:hypercomprho}) and we use this to prove \Cref{thm:relative}. As an application we prove the following theorem on sets of points in convex position. 

\begin{thm}
\label{thm:convex}
Suppose that $S\subset \mathbb{R}^d$ is a set of points in convex position. Consider nonzero $d$-dimensional vectors $a_{1},\dots,a_{n}\in\RR^{d}$, and write
$X=a_{1}\xi_{1}+\dots+a_{n}\xi_{n}$, where $\xi_{1},\dots,\xi_{n}$
are i.i.d.\ Rademacher random variables. Then there is $C_d$ depending only on $d$ such that:
\begin{compactenum}
    \item $\Pr(X\in S)\le C_dn^{-d/2^d}$, and
    \item $\Pr(X\in S)\le C_d\rho^{1/(d2^{d-1})}$, where $\rho=\max_{x\in\mathbb{R}^d}\Pr(X=x)$.
\end{compactenum}
\end{thm}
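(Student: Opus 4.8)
The plan is to apply the weighted hypergraph Kővári–Sós–Turán machinery (in the form of Corollary~\ref{cor:hypercomprho}, once it is available) to the sum hypergraph $H_{X,S}(I_1,\dots,I_d)$ for a suitable balanced partition $\{1,\dots,n\}=I_1\sqcup\dots\sqcup I_d$. First I would recall the key geometric input: by Lemma~\ref{lem:Zonotope-bound-weak}, a set of points in convex position in $\RR^d$ cannot contain a Minkowski sum $A_1+\dots+A_{d+1}$ with every $|A_i|\ge 2$. This is exactly the statement that $H_S(V_1,\dots,V_d)$ — wait, more carefully: I want to split into $d$ parts, not $d+1$. The cleanest route is to partition $\{1,\dots,n\}$ into $d$ blocks, each of size roughly $n/d$, and observe that if $V_1,\dots,V_d$ are the supports of $X_1,\dots,X_d$, then $H_S(V_1,\dots,V_d)$ being ``$K^{(d)}_{2,\dots,2}$-free'' would say $S$ contains no $A_1+\dots+A_d$ with each $|A_i|=2$. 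Convex position gives us the absence of $A_1+\dots+A_{d+1}$, which is slightly weaker, so instead I would peel off one coordinate: fix the last block $I_d$ and condition on $\xi_j$ for $j\in I_d$, reducing to translates $S'=S-x_d$ of $S$ and the hypergraph $H_{S'}(V_1,\dots,V_{d-1})$. Hmm — a translate of a convex-position set is again in convex position, so by Lemma~\ref{lem:Zonotope-bound-weak} applied in dimension $d$ we still only forbid a $(d{+}1)$-fold sumset. The honest fix is that a convex-position set contains no $A_1+\dots+A_{d+1}$, so the $(d{+}1)$-partite hypergraph $H_S(V_1,\dots,V_{d+1})$ is $K^{(d+1)}_{2,\dots,2}$-free; I would therefore split into $d+1$ blocks of size $\sim n/(d+1)$.

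With that in hand, the second step is the counting. Assign to each $x\in V_i$ the weight $w(x)=\Pr(X_i=x)$, so $\sum_{x\in V_i}w(x)=1$ and $\Pr(X\in S)=\sum_{v_1\dots v_{d+1}\in E}w(v_1)\cdots w(v_{d+1})$ is the weighted edge count of $H_S(V_1,\dots,V_{d+1})$. The weighted Erdős-box/Kővári–Sós–Turán bound (Corollary~\ref{cor:hypercomprho}) then bounds this weighted edge density of a $K^{(d+1)}_{2,\dots,2}$-free $(d{+}1)$-partite hypergraph. I would feed in the two different ``volume'' estimates for the vertex classes. For part~(1): each $V_i$ has size at most $2^{|I_i|}=2^{\Theta(n/(d+1))}$, but more to the point, the relevant quantity controlling the bound is the $\ell^2$-type mass, and plugging the trivial bounds should produce an exponent of the form $n^{-c}$ with $c$ a constant depending only on $d$; tracking the exponents in the $(d{+}1)$-uniform KST bound for a complete-partite forbidden subgraph with all parts of size $2$ yields exactly $d/2^d$. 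For part~(2): instead of using the sizes of the $V_i$, I would use the fact that $\rho_i:=\max_x\Pr(X_i=x)\le \rho$ (since $X=X_i+(\text{independent sum})$, conditioning gives $\max_x\Pr(X_i=x)\le\rho$... actually the inequality goes $\rho\le\rho_i$ is false; the correct direction is $\max_x \Pr(X_i = x) \ge \rho$ — so I should be careful and instead use $\rho$ as an upper bound on each $w(x)$ after a halving trick, splitting each block in two). The cleaner statement is: by conditioning, $\max_{x}\Pr(X_i=x)$ is monotone, and a standard argument (as in the proof that (a) implies (b) above, and as used for Theorem~\ref{thm:relative}) lets one replace the $\ell^\infty$ weight bound on each class by $\rho^{1/2}$ after splitting blocks, giving the $\rho^{1/(d2^{d-1})}$ exponent.

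The main obstacle, and the step I would spend the most care on, is bookkeeping the exponents in the weighted $(d{+}1)$-uniform Kővári–Sós–Turán inequality so that they come out to precisely $d/2^d$ and $1/(d2^{d-1})$ rather than merely ``some constant depending on $d$''. In the unweighted setting, a $K^{(k)}_{2,\dots,2}$-free $k$-partite hypergraph on parts of size $N$ has $O(N^{k-k/2^{k-1}})$ edges (Erdős), i.e.\ edge density $O(N^{-k/2^{k-1}})$; with $k=d+1$ and $N\approx 2^{n/(d+1)}$ this gives $\Pr(X\in S)\le O(N^{-(d+1)/2^{d}})$, and since the useful lower bound on the ``effective $N$'' for a weighted class of total mass $1$ and $\ell^\infty$-norm $w_{\max}$ is $1/w_{\max}$, one wants $w_{\max}$ (essentially $\rho$-type quantities or $n^{-1/2}$-type quantities from the one-dimensional Erdős–Littlewood–Offord theorem applied to a generic projection) to feed through. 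Reconciling the $(d+1)/2^d$ I get naively with the claimed $d/2^d$ and $1/(d2^{d-1})$ will force the right choice of how many blocks to split into and whether to apply the forbidden-configuration bound in its $(d{+}1)$-partite form or after one conditioning step — I expect the resolution is to condition away one block first (leaving a genuinely $d$-partite $K^{(d)}_{2,\dots,2}$-free... no, $(d{+}1)$-partite issue persists) and that the exponent $d/2^d$ comes from the weighted bound in Corollary~\ref{cor:hypercomprho} being stated with a $(\log)$-free constant and the $\rho$-substitution converting a $2^{n/?}$ into the stated powers; I would verify this against the $d=2$ case, where the claim reads $\Pr(X\in S)\le C n^{-1/2}$ and $\le C\rho^{1/4}$, matching the convex-curve instance of Theorem~\ref{thm:variousS} up to the exponent in $\rho$.
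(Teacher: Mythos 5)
There is a genuine gap, and you have in fact located it yourself without resolving it: working only from \cref{lem:Zonotope-bound-weak}, convex position forbids a $(d{+}1)$-fold sumset $A_1+\dots+A_{d+1}$, so the only hypergraph you can certify as $K^{(k)}_{2,\dots,2}$-free is the $(d{+}1)$-partite one, and then \cref{cor:hypercomprho} with $m=d+1$ gives $\Pr(X\in S)\le C\rho^{1/((d+1)2^{d})}$, strictly weaker than the claimed $\rho^{1/(d2^{d-1})}$; conditioning away one block does not help (as you noted, a translate of a convex-position set is again in convex position, so the $d$-partite hypergraph of the translate need not be box-free). The missing idea in the paper is a strengthening of \cref{lem:Zonotope-bound-weak}, namely \cref{prop:convex-zonotope}: using, for each $x\in S$, the outward normal $v_x$ of a supporting hyperplane, one covers $S$ by the $2d$ sets $S_i^{\pm}=\{x:\pm\langle v_x,e_i\rangle>0\}$, and a ``concave up'' argument (projecting a putative sumset $\{a_1,b_1\}+\dots+\{a_d,b_d\}$ along $e_i$ and reusing \cref{lem:Zonotope-bound-weak} one dimension down) shows that no $S_i^{\pm}$ contains a $d$-fold sumset with all $|A_j|=2$. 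Then each $H_{S_i^{\pm}}(V_1,\dots,V_d)$ is $K^{(d)}_{2,\dots,2}$-free, one applies the machinery with $m=d$ rather than $m=d+1$, and a union bound over the $2d$ pieces costs only a constant; this is what produces the exponents $d/2^d$ and $1/(d2^{d-1})$.

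Your bookkeeping for part (1) also cannot close as written. The per-block weight bound you propose to feed in (a generic one-dimensional projection plus Erd\H{o}s--Littlewood--Offord) gives $\lambda=O(n^{-1/2})$ per class, and \cref{thm:hypercomprho} then yields only $n^{-1/2^{d}}$ (or worse with $d+1$ parts), not $n^{-d/2^{d}}$. To reach $n^{-d/2^{d}}$ one needs per-block point concentration at scale $n^{-d/2}$, which is exactly what Hal\'asz's theorem (\cref{thm:Halasz}) provides when the coefficients of each block robustly span $\RR^d$; when they do not, half the coefficients of some block lie in a hyperplane $W$, one conditions on the remaining variables and inducts on dimension with the set $(S-X_2)\cap W$, which is again in convex position. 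This is the content of \cref{thm:hypercompabs}, and it (together with the $2d$-fold cover above) is how the paper derives part (1); part (2) follows from \cref{cor:hypercomprho} with $m=d$ via \cref{lem:part-concentration}, which is essentially the ``splitting'' you gesture at, but it must be run with $d$ parts, which again requires the cover from \cref{prop:convex-zonotope}.
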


If $S$ is any affine variety not containing a line segment, then one can show using B\'ezout's theorem that for sufficiently large $k$ there is no sumset of the form $A_1+\dots+A_k$ included in $S$, where each $|A_i|\ge 2$.
However, we instead consider a more sophisticated variation of this fact, in the context of sets definable with respect to o-minimal structures, in the following subsection.

\subsection{o-minimal geometry and irreducible components}\label{subsec:o-minimal}

Recall that affine varieties in $\RR^d$ have notions of ``irreducible component'' and ``degree'', which control the complexity of intersections. More precisely, the following properties are satisfied.
\begin{enumerate}
    \item Every $k$-dimensional affine variety $S\subset \mathbb{R}^d$ with degree $r$ is a union of at most $r$ irreducible varieties with dimension at most $k$.
    \item (B\'ezout's Theorem) Every pair of distinct irreducible $k$-dimensional varieties $S_1,S_2\subset \mathbb{R}^d$ of degree at most $r$ intersect in an affine variety with dimension at most $k-1$ and degree at most $r^2$.
\end{enumerate}
Recalling the $k$-uniform sum hypergraph $H_{S}(V_1,\dots,V_k)$ from \cref{def:incidence-hypergraph}, the combinatorial manifestation of the above facts is that if $S$ is an affine variety not containing a line segment then this hypergraph satisfies a certain recursive property (we note that the hypothesis of not containing a line segment ensures that no irreducible variety is equal to a nontrivial translate of itself). In a hypergraph $H$ with edge set $E$, recall that the \emph{link} of a vertex $v$ is the $(k-1)$-uniform hypergraph with edge set $\{e:e\cup \{v\}\in E\}$. The \emph{common link} of a pair of vertices is the intersection of the links of those vertices.
\begin{defn}
\label{def:hypergraph-complexity}
Say that a 1-uniform hypergraph $H$ has \emph{complexity} $M$ if it has at most $M$ edges. For $k>1$, say that a $k$-uniform $k$-partite hypergraph $H$ with vertex sets $V_1,\dots,V_k$ has complexity $M$ if we can write $H=H_1\cup \dots H_M$ such that for $1\le i \le M$, the common link in $H_i$ of any two elements $x,y\in V_1$ is a $(k-1)$-uniform $(k-1)$-partite hypergraph on the vertex sets $V_2, \dots, V_k$ with complexity $M$.
\end{defn}

Note that if a $k$-uniform $k$-partite hypergraph $H$ is $K^{(k)}_{2,\dots,2}$-free, then it has complexity $1$. As it happens, a bounded-complexity assumption is enough to prove a (weighted) K\H ov\'ari--S\'os--Tur\'an-type theorem (\cref{cor:hypercomprho}), which implies \cref{thm:baby-forwards,thm:pointvsS} for affine varieties. Actually, one can prove that the sum hypergraph $H$ has bounded complexity under a much weaker condition on $S$, namely the condition that $S$ does not contain a line segment and is \emph{definable with respect to an o-minimal structure}.
We will introduce this notion after discussing some recent developments in the area (see \cite{vdD98} for a historical introduction and \cite{S17} for a more recent survey).

Following earlier results for curves in $\RR^2$ due to Bombieri and Pila~\cite{BP89}, Pila \cite{P95} proved that  any affine variety $S\subset \mathbb{R}^d$ of degree $r$ and dimension $k$ satisfies
$$\left|S\cap \frac{1}{n}\mathbb{Z}^d\cap \prod_{i=1}^d [a_i,b_i]\right|=n^{k-1+1/r+o(1)}$$
for any constants $a_1<b_1,\dots,a_d<b_d$. Extending beyond the algebraic setting, Pila and Wilkie \cite{PW06} considered the setting where $S\subset \mathbb{R}^d$ is definable with respect to an o-minimal structure, in which case one can obtain similar results by decomposing $S$ into an ``algebraic part''
and a complementary ``transcendental part''. 
Exploiting these rational point counting theorems, powerful results in arithmetic dynamics such as the Manin--Mumford conjecture and parts of the Andr\'e--Oort conjecture have recently been attacked with great success (see for example \cite{P11,PU08}). 

On the more combinatorial side, a number of results bounding the number of edges in incidence graphs and hypergraphs (such as the work of Fox, Pach, Sheffer, Suk and Zahl~\cite{FPSSZ17}, and Do~\cite{Do18} on the semi-algebraic Zarankiewicz problem) have been generalised in the o-minimal direction. In particular, Basu and Raz \cite{BR18} have generalised the  Szemer\'edi--Trotter theorem to certain o-minimal incidence graphs and Chernikov, Galvin, and Starchenko \cite{CGS20} have generalised this to more general incidence hypergraphs.

To proceed further, we finally define the notion of an o-minimal structure. Recall that a first-order formula over $\mathbb{R}$ is a well-formed sentence involving parentheses $(,)$, quantifiers $\exists,\forall$, predicates (such as $>,<,=$), connectives (such as $\vee,\wedge,\rightarrow,\lnot,\top,\bot$), variables (which we give single-letter names such as $x,y,z$), operations (such as $+$, $\times$, $\exp$), and  constants (one for every element of $\mathbb{R}$), such that we can specialise the unbound variables to elements of $\mathbb{R}$ and interpret the sentence as either true $\top$ or false $\bot$. For example, if $\phi(x)$ is the formula $(\exists y)(y^2=x)\wedge (\exists z)(z^2=1-x)$, then $\phi(x)$ is true exactly when the unbound variable $x$ has $x\in[0,1]$.

\begin{defn}
A collection $\mathcal{F}$ of functions $f:\mathbb{R}^{\ell_f}\to \mathbb{R}$ of various arities $\ell_f$ generates an \emph{o-minimal structure $\mathbb{R}_\mathcal{F}$} (over the real field) if any first-order formula $\phi$ involving one free parameter $x$, operations in $\mathcal{F}\cup \{+,\times\}$ and predicates $>,<,=$, has the property that the set
$$S_{\phi}:=\{x:\phi(x)\text{ is true}\}\subseteq \mathbb{R}$$
is a finite union of points and (possibly unbounded) intervals.

We say a subset $S\subseteq \mathbb{R}^d$ is \emph{definable} (with respect to $\mathcal{F}$) if there is a first-order formula $\phi$ with $d$ free parameters $x_1,\dots,x_d$, operations in $\mathcal{F}\cup \{+,\times\}$, and predicates $>,<,=$, such that $$S=S_{\phi}:=\{(x_1,\dots,x_d): \phi(x_1,\dots,x_d)\text{ is true}\}\subseteq \mathbb{R}^d.$$
\end{defn}
\textbf{Example 0:} $\mathcal{F}=\{f\}$ with $f(x,y)=\sin(x+y)$ does not generate an o-minimal structure. Indeed, the formula $\phi(x)$ given by $f(x,0)=0$ has
$S_{\phi}=2\pi \mathbb{Z}$,
which is not a finite union of points and intervals.

\textbf{Example 1:} Take $\mathcal{F}=\emptyset$. Obviously, semi-algebraic sets (Boolean combinations of subsets of $\mathbb{R}^d$ given by polynomial equalities and inequalities) are definable. \textit{A priori}, it may seem that other more exotic sets are definable as well (for example, images of semi-algebraic sets under polynomial maps are definable, and one can define sets using functions that are themselves only defined implicitly in terms of polynomials). However, the Tarski--Seidenberg theorem \cite{T51} states that in this setting actually the only definable sets are semi-algebraic subsets of $\mathbb{R}^d$ (so, the ``exotic'' sets described above are actually semi-algebraic, though this isn't obvious), and taking $d=1$ shows that $\mathcal{F}$ generates an o-minimal structure (commonly denoted $\RR_{\mathrm{alg}}$).

\textbf{Example 2:} Wilkie~\cite{W96} showed that $\mathcal{F}=\{\exp\}$ generates an o-minimal structure. In this setting, definable sets are rather more complicated to describe, 
but in \cite{W96} it is shown that they are all of the form $\pi(\{(x_1,\dots,x_n):f(x_1,\dots,x_n,e^{x_1},\dots,e^{x_n})=0\})$, where $f$ is a polynomial in $2n$ variables and $\pi$ is a linear projection $\mathbb{R}^n\to \mathbb{R}^m$ for some $m\in \NN$. 

\textbf{Example 3:}
van den Dries and Miller \cite{DM94} showed that $\exp$, together with all ``restricted analytic functions'' ($f|_{[0,1]^d}$ for $f$ an analytic function on an open neighborhood of $[0,1]^d$) generate an o-minimal structure.

In the setting of o-minimal geometry, the following notion plays a similar role to the notion of degree for affine varieties, and description complexity for semi-algebraic sets (recall \cref{def:description-complexity}).

\begin{defn}\label{def:o-complexity}
We say an o-minimal structure $\mathbb{R}_{\mathcal{F}}$ is said to be \emph{finitely generated} if $\mathcal{F}$ is finite.

For a finitely generated o-minimal structure $\mathbb{R}_{\mathcal{F}}$, the \emph{complexity} of a formula $\phi$ is the number of symbols used in its description (every constant from $\mathbb{R}$ counts as one symbol). The complexity of a definable set $S$ is then defined to be the minimum complexity of a formula defining $S$.
\end{defn}
Examples 1 and 2 above describe finitely generated o-minimal structures, but Example 3 does not. 
For an arbitrary o-minimal structure $\mathbb{R}_{\mathcal{F}}$, note that every definable set only uses a finite number of the functions in $\mathcal F$ in its description. Hence restricting our attention to finitely generated o-minimal structures is not actually a restriction at all, and in fact our notion of complexity is equivalent to the notion of ``definable families'' more commonly used in the o-minimality literature (see \Cref{rem:complexity}).

By the Tarski--Seidenberg theorem (see \cite{T51}), the degree of an affine variety and the description complexity of a semi-algebraic set are bounded from above and below by a function of its complexity with respect to the o-minimal structure $\RR_{\emptyset}=\RR_{\mathrm{alg}}$.

We show that the above notion of complexity controls the hypergraph notion of complexity in \cref{def:hypergraph-complexity}. This is done by constructing an analogue of irreducible components we call \emph{self-irreducible components}, for definable sets containing no line segment, which satisfies a restricted analogue of B\'ezout's theorem in which we only consider intersections $S_1\cap S_2$ for which $S_1,S_2$ are translates of each other.

\begin{prop}\label{prop:o-minimal-complexity}
For a finitely generated o-minimal structure $\RR_{\mathcal F}$ and any $r,d\in \NN$, there is $C_{\mathcal F,d,r}$ such that for any $k$-dimensional definable subset $S\subset \mathbb{R}^d$ with complexity $r$ not containing a line segment, any $(k+1)$-uniform $(k+1)$-partite intersection hypergraph $H_{S}(V_1,\dots,V_{k+1})$ has complexity at most $C_{\mathcal F,d,r}$. 
\end{prop}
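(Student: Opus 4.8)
The plan is to build, for any definable $S \subset \mathbb{R}^d$ of complexity $r$ not containing a line segment, a decomposition into "self-irreducible components" that plays the role of the decomposition into irreducible varieties, and then establish a restricted Bézout-type bound controlling the intersections of translates of these components. The key point is that everything must be uniform in families: the number of self-irreducible components, their complexities, and the intersection bounds must all be bounded in terms of $\mathcal{F}, d, r$ only. The main tool for this uniformity is the standard fact from o-minimality that for a definable family $\{S_t\}_{t}$ (parametrized by a definable set of parameters), there is a uniform bound on the number of connected components of each fibre $S_t$, and each fibre has a cell decomposition of bounded complexity; in the finitely generated setting, a definable set of complexity $r$ belongs to a definable family of "complexity-$r$ formulas" whose parameter space is itself definable, so one gets bounds depending only on $\mathcal{F}, d, r$. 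I would first record this as a lemma (or cite the equivalence with definable families mentioned in Remark on complexity).

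Next I would define a \emph{self-irreducible component} of $S$: informally, a maximal definable subset $S' \subseteq S$ such that $S'$ cannot be written as a union of two definable proper subsets, each of which is "in the same translate class" — more precisely, the relevant notion is that $S'$ is not a translate of a proper definable subset of itself, and that $S'$ has the same dimension $k$. The hypothesis that $S$ contains no line segment is what guarantees that a self-irreducible component $S'$ is not equal to any nontrivial translate $S' + v$ of itself (if $S' = S' + v$ for $v \neq 0$, iterating and using connectedness/definability forces a line segment inside $S'$). The restricted Bézout statement I would aim for is: if $S_1, S_2$ are distinct self-irreducible components of $S$ (or, more relevantly for the hypergraph recursion, translates $S' , S' + v$ of a single self-irreducible component with $v \neq 0$), then $S_1 \cap S_2$ has dimension at most $k-1$ and complexity bounded in terms of $\mathcal{F}, d, r$. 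The dimension drop is the analogue of Bézout; the complexity bound is again a definable-families statement. One then wants to iterate this: intersecting with more and more translates drops the dimension each time, so after at most $k+1$ steps one reaches dimension $< 0$, i.e. the empty set — but to make this into a \emph{complexity} bound for the hypergraph (not just a Zarankiewicz-type edge bound) one needs the decomposition itself, at each level, to split into boundedly many pieces each with bounded-complexity common links.

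Finally I would assemble the hypergraph complexity bound by induction on $k$, matching the recursive structure of Definition~\ref{def:hypergraph-complexity}. For $k+1 = 1$ (the base case $k=0$, $S$ finite of bounded size), the hypergraph is $1$-uniform with boundedly many edges, so complexity is bounded. For the inductive step, given the $(k+1)$-uniform hypergraph $H_S(V_1,\dots,V_{k+1})$, I write $S$ as the union of its boundedly many self-irreducible components $S^{(1)}, \dots, S^{(N)}$ (with $N \le C_{\mathcal{F},d,r}$), inducing a decomposition $H = H_1 \cup \dots \cup H_N$. Within $H_j$, the common link of two vertices $x, y \in V_1$ is the sum hypergraph $H_{(S^{(j)} - x) \cap (S^{(j)} - y)}(V_2,\dots,V_{k+1})$; by the restricted Bézout statement, $(S^{(j)} - x) \cap (S^{(j)} - y) = (S^{(j)} \cap (S^{(j)} + (x - y))) - x$ has dimension $\le k - 1$ (using $x \neq y$ and that $S^{(j)}$ is not a nontrivial translate of itself) and complexity bounded in terms of $\mathcal{F}, d, r$, so by the inductive hypothesis its sum hypergraph has complexity $\le C_{\mathcal{F}, d, k-1, r'}$ for a bounded $r'$, as required. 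The main obstacle — the step I expect to be genuinely delicate — is setting up the self-irreducible component decomposition so that it is simultaneously (i) a genuine decomposition into boundedly many definable pieces, (ii) uniform in families so all constants depend only on $\mathcal{F}, d, r$, and (iii) compatible with the restricted Bézout dimension-drop, i.e. each piece really fails to be a nontrivial translate of itself; getting the right definition that makes all three hold at once (rather than, say, a decomposition with the first two properties but where some piece \emph{is} translation-invariant in some direction, which is where the no-line-segment hypothesis must be used carefully) is the crux.
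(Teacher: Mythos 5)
Your high-level architecture (decompose $S$ into boundedly many ``self-irreducible'' pieces, prove a restricted B\'ezout-type dimension drop for intersections of a piece with its own nontrivial translates, then induct on dimension to bound the hypergraph complexity) is exactly the paper's, and your final inductive assembly of the hypergraph bound is fine. But there is a genuine gap, and it sits precisely at the point you yourself flag as ``the crux'': you never construct the decomposition, and the definition you tentatively propose would not work. Declaring a piece self-irreducible when it is ``not a translate of a proper definable subset of itself'', or when it is not equal to any nontrivial translate of itself, is strictly weaker than what the induction needs: the common-link step requires $\dim\bigl((S'-x)\cap(S'-y)\bigr)<\dim S'$ for all $x\ne y$, and ``$S'\ne S'+v$'' only gives that the intersection is a proper subset, which in the definable category does not force a dimension drop. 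For example, $S'=\{(x,x^2):x\in\RR\}\cup\{(x,x^2+1):x\in\RR\}$ contains no line segment and is not a nontrivial translate of itself, yet $S'\cap(S'+(0,1))$ is a full parabola, of the same dimension as $S'$. So the no-line-segment hypothesis cannot be used merely to rule out exact translation invariance; it must rule out all dimension-preserving overlaps with translates.

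The paper resolves this by building the dimension drop into the definition ($T$ is self-irreducible iff $\dim((T-x)\cap(T-y))<\dim T$ for all distinct $x,y$), so that the restricted B\'ezout statement becomes trivial and all the work goes into producing the decomposition. Concretely, one shows (\cref{lem:BadBounded}) that the set $\Bad(S)$ of nonzero directions $t$ with $\dim(S\cap(S-t))=\dim S$ is finite, of size bounded in terms of the complexity: $\Bad(S)$ is definable by the fibre-dimension theorem, hence of bounded size or uncountable; if uncountable, a local-agreement (``$\varepsilon$-isometric'') argument plus pigeonhole on nearby parameters produces arbitrarily long arithmetic progressions in $S$, contradicting the no-line-segment hypothesis via uniform finiteness on lines. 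Given that, $S$ is partitioned into the definable pieces $S_{i_1,\dots,i_s}$ recording, for each bad direction $v_j$, how many points of $S$ lie ahead in the $v_j$-direction; the full-dimensional pieces satisfy $S_{i_1,\dots,i_s}\cap(S_{i_1,\dots,i_s}+v_j)=\emptyset$ and hence are self-irreducible, and the lower-dimensional pieces are handled by induction on dimension. None of this is supplied in your proposal (nor is it replaceable by the uniform bound on connected components you invoke), so as written the argument is incomplete at its central step.
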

Using \cref{prop:o-minimal-complexity} and a K\H ov\'ari--S\'os--Tur\'an-type theorem, we deduce the following strong bound comparing $\Pr(X\in S)$ with the maximum point concentration probability.

\begin{thm}
\label{thm:pointvsS}
Let $S\subseteq\RR^d$ be a set which is definable with respect to an o-minimal structure and does not contain any line segment. Consider nonzero $d$-dimensional vectors $a_{1},\dots,a_{n}\in\RR^{d}$, and write
$X=a_{1}\xi_{1}+\dots+a_{n}\xi_{n}$, where $\xi_{1},\dots,\xi_{n}$
are i.i.d.\ Rademacher random variables. Then there is a constant $C_S$ such that for $k=\dim(S)$ and $\rho=\max_{x\in \RR^d}\Pr(X=x)$, we have
$$\Pr(X\in S)\le C_{S}\rho^{1/((k+1)2^k)}.$$
\end{thm}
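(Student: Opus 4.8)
The plan is to deduce \cref{thm:pointvsS} by combining \cref{prop:o-minimal-complexity} with a weighted K\H ov\'ari--S\'os--Tur\'an-type theorem for bounded-complexity hypergraphs, applied to a suitably chosen sum hypergraph. First I would fix $k=\dim(S)$ and partition $\{1,\dots,n\}$ into $k+1$ blocks $I_1\sqcup\dots\sqcup I_{k+1}$ of as-equal-as-possible size, so that each $X_i=\sum_{j\in I_i}a_j\xi_j$ is a sum of roughly $n/(k+1)$ Rademacher terms with nonzero coefficients. By the Erd\H os--Littlewood--Offord theorem (in the point-concentration form valid in $\RR^d$, obtained by projecting to a generic line), the maximum weight $w(x)=\Pr(X_i=x)$ of any vertex of $V_i$ is $O(\rho_i)$ where $\rho_i=\max_x\Pr(X_i=x)$; more importantly I want to relate everything back to $\rho=\max_x\Pr(X=x)$. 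Since $X=X_1+\dots+X_{k+1}$ and the $X_i$ are independent, we have $\Pr(X_i=x)=\sum_y \Pr(X_i=x)\Pr(\sum_{j\ne i}X_j = y)\ge\dots$; more directly, $\rho_i\le$ (something comparable to $\rho$ up to the fact that conditioning on the other blocks can only spread mass). Actually the clean route is: by symmetry and independence, for each $i$ the total mass $\sum_{x\in V_i}w(x)^2=\Pr(X_i=X_i')$ for an independent copy, and H\'alasz/ELO-type bounds give $\sum_{x\in V_i}w(x)^2 = O(|I_i|^{-1/2})=O(n^{-1/2})$. But to get the $\rho$-dependence in the exponent we instead use that $w(x)\le \rho'$ for all $x\in V_i$, where $\rho'=\max_{x,i}\rho_i$, and a standard argument (splitting further or using a Fourier/Esseen bound) shows $\rho'\le C\rho^{1/(k+1)}$ is false in general — rather one shows $\rho \le \prod \rho_i$ is false too; the correct statement is $\rho_i\le \rho^{|I_i|/n}\cdot(\text{const})$ is what fails, so instead I would simply carry $\rho_i$ through and observe $\rho_i \le C_k \rho^{1/(k+1)}$ does hold, because projecting to a generic line turns this into the one-dimensional statement $\max_x\Pr(\sum_{j\in I_i}b_j\xi_j=x)\le C\big(\max_x\Pr(\sum_j b_j\xi_j=x)\big)^{|I_i|/n}$, which follows from H\'alasz's theorem (\cref{thm:Halasz}) relating point concentration to the count of $\pm1$ solutions.

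With the weights controlled, the core step is the weighted KST-type bound. By \cref{prop:o-minimal-complexity}, the $(k+1)$-uniform $(k+1)$-partite sum hypergraph $H_{X,S}(I_1,\dots,I_{k+1})=H_S(V_1,\dots,V_{k+1})$ has complexity at most $M=C_{\mathcal F,d,r}$, a constant depending only on $S$. I would then invoke (a suitable form of) the weighted hypergraph KST theorem alluded to as \cref{cor:hypercomprho}: for a $k'$-uniform $k'$-partite hypergraph of bounded complexity with vertex weights summing to $1$ on each side and maximum weight $\le \rho'$, the weighted edge count $\sum_{e} \prod_{v\in e} w(v)$ is $O(\rho'^{\,1/2^{k'-1}})$ — this is the weighted analogue of the fact that a $K^{(k')}_{2,\dots,2}$-free (more generally bounded-complexity) $k'$-partite $k'$-graph on $N$ vertices per side has $O(N^{k'-1/2^{k'-1}})$ edges, which after normalizing weights (dividing by $N$ per side and using $w\le 1/N\cdot(N\rho')$) gives the stated power of $\rho'$. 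Plugging in $k'=k+1$ and $\rho'=C_k\rho^{1/(k+1)}$ yields
$$\Pr(X\in S)=\sum_{e\in E(H)}\prod_{v\in e}w(v) \le C\,\rho'^{\,1/2^{k}} \le C_S\,\rho^{1/((k+1)2^k)},$$
which is exactly the claimed bound. I would need to be slightly careful that the complexity-$M$ decomposition $H=H_1\cup\dots\cup H_M$ is compatible with the weighted counting (just sum the bound over the $M$ pieces, at the cost of a factor $M$ absorbed into $C_S$), and that the recursive "common link has bounded complexity" structure of \cref{def:hypergraph-complexity} is precisely the hypothesis the weighted KST lemma needs.

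The main obstacle, and the step requiring the most care, is proving the weighted bounded-complexity KST theorem itself (\cref{cor:hypercomprho}) and tracking the exponent $1/((k+1)2^k)$ through its recursion. The natural proof is by induction on uniformity $k'$: for $k'=1$ a complexity-$M$ $1$-uniform hypergraph has $\le M$ edges so the weighted count is $\le M\rho'$; for the inductive step, one applies Cauchy--Schwarz (or convexity) over pairs of vertices in $V_1$, using that the common link of any pair has complexity $\le M$ and hence, by induction, small weighted edge count in $V_2\times\dots\times V_{k'}$ — the square root in Cauchy--Schwarz is exactly what produces the factor-of-$2$ loss at each level, giving the $2^{k'-1}$ in the exponent. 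The delicate points are: ensuring the Cauchy--Schwarz is applied with the correct weights (one pairs up with weight $w(x)w(x')$ on $V_1$ and must bound a diagonal term $\sum_x w(x)^2(\deg\text{-weighted})$ using $\max w \le \rho'$), and verifying that "common link of $x,x'$ in $H_i$ has complexity $M$" survives the weighting so the induction hypothesis genuinely applies. A secondary, more bookkeeping-flavored obstacle is the reduction $\rho_i\le C_k\rho^{1/(k+1)}$: one must make sure H\'alasz's theorem is being applied in a form that gives a clean power relationship between $\Pr(X_i=\cdot)$ and $\Pr(X=\cdot)$ uniformly over all coefficient vectors, handling degenerate cases (e.g.\ many repeated coefficients) where the generic projection to a line must be chosen with a little care.
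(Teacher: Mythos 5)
Your overall architecture is the paper's: \cref{prop:o-minimal-complexity} gives that the $(k+1)$-partite sum hypergraph has bounded complexity, and a weighted K\H ov\'ari--S\'os--Tur\'an-type bound for bounded-complexity hypergraphs (\cref{thm:hypercomprho}, proved by induction on uniformity with a $t=2$ application of \cref{lem:weighted-KST}, which is where the $2^{k}$ in the exponent comes from) converts this into the probability bound. However, there is a genuine gap in the step where you control the vertex weights. You fix an equipartition $I_1\sqcup\dots\sqcup I_{k+1}$ into blocks of roughly equal size and then assert $\rho_i\le C_k\rho^{1/(k+1)}$ for every block, justified by the ``one-dimensional statement'' $\max_x\Pr\bigl(\sum_{j\in I_i}b_j\xi_j=x\bigr)\le C\bigl(\max_x\Pr\bigl(\sum_j b_j\xi_j=x\bigr)\bigr)^{|I_i|/n}$, which you attribute to H\'alasz. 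That statement is false, and no choice of generic projection rescues it: take $b_j=1$ for all $j\in I_1$ with $|I_1|=n/2$ and the remaining $b_j$ rationally independent; then the left side is of order $n^{-1/2}$ while the right side is exponentially small (the overall point concentration is at most $2^{-n/2}$ by \cref{fact:concentration-monotone}). The same example shows that for a \emph{fixed} equipartition the needed bound $\rho_i\le C_k\rho^{1/(k+1)}$ simply fails, so \cref{thm:hypercomprho} cannot be applied with $\lambda\approx\rho^{1/(k+1)}$. (H\'alasz's theorem, \cref{thm:Halasz}, bounds concentration in terms of $n$ and the robust span of the coefficients; it gives no relation of the form ``block concentration is a power of total concentration''.) Your own text wavers on this point (``is false in general \dots does hold''), which is exactly where the argument breaks.

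The missing idea is that the partition must be chosen \emph{adaptively}, not as an equipartition. This is the content of \cref{lem:part-concentration}: using that $\rho(\cdot)$ is monotone under passing to sub-multisets (\cref{fact:concentration-monotone}), super-multiplicative across a partition (\cref{fact:concentration-product}), and decreases by at most a factor of $2$ when one coefficient is removed (\cref{fact:concentration-lipschitz}), a discrete-continuity argument produces a prefix $I_1=\{1,\dots,i\}$ with $\rho^{1/(k+1)}\le\rho((a_j)_{j\le i})\le 2\rho^{1/(k+1)}$, and one recurses on the remaining coefficients. With the resulting (generally very unbalanced) partition, every vertex weight in every part is at most $2\rho^{1/(k+1)}$, and then your application of \cref{prop:o-minimal-complexity} together with \cref{thm:hypercomprho} (summing over the $M$ pieces of the complexity decomposition, absorbed into $C_S$) goes through exactly as you describe, giving $\Pr(X\in S)\le C_S\rho^{1/((k+1)2^{k})}$; this is precisely the paper's route via \cref{cor:hypercomprho}. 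Your inductive sketch of the weighted KST step itself is essentially right, provided the diagonal/maximum-weight term is handled as in \cref{lem:weighted-KST}.
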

Here the constant $C_S$ depends only on the dimension $d$ and the complexity of $S$.

The proof of \cref{thm:baby-forwards} is a little more involved, and combines most of the ideas we have discussed so far. First, using the Tao--Vu inverse theorem we observe that the desired result follows from \Cref{thm:pointvsS} unless most of the coefficients $a_1,\dots,a_n$ lie in a small generalised arithmetic progression with low rank. After conditioning on a small number of $\xi_i$, the random variable $X$ is conditionally supported in a low-rank generalised arithmetic progression $Q$. We then adapt parts of a general theory due to Pila~\cite{P09} to cover $S\cap Q$ with a small number of projections of open subsets of semi-algebraic sets. Although the resulting covering sets are not themselves semi-algebraic, we can use similar ideas as in the proof of \Cref{thm:Gotsman} to deduce \cref{thm:baby-forwards}.

We note that in the works mentioned earlier in this subsection, counting the number of rational points \cite{PW06} and algebraic points \cite{P09} of bounded height on definable sets $S$, one has very poor control on the number of such points lying on the ``algebraic'' part of $S$. One therefore obtains the strongest bounds by excising this part from $S$. It is worth highlighting that our situation is quite different: we have bounds of the form $\Pr(X\in S_\mathrm{alg})\le n^{-1/2+o(1)}$ for the algebraic part $S_\mathrm{alg}$, nearly matching the point concentration bound $\Pr(X=x)=O(n^{-1/2})$ given by the Erd\H os--Littlewood--Offord theorem.

\section{Preliminaries on point concentration}\label{sec:point}

In this section we collect a few results about point concentration, which will be useful throughout the paper. For $a_1,\dots,a_n\in \RR^d$, let $\rho(a_1,\dots,a_n)=\max_{x\in \RR^d}\Pr(a_1\xi_1+\dots+a_n\xi_n=x)$, where $\xi_1,\dots,\xi_n$ are i.i.d. Rademacher random variables. We start with the following simple facts.

\begin{fact}\label{fact:concentration-monotone}
For any subset $I\subseteq \{1,\dots,n\}$ we have $\rho(a_1,\dots,a_n)\le \rho((a_i)_{i\in I})$.
\end{fact}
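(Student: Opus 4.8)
The plan is to observe that conditioning on the Rademacher variables outside $I$ does not decrease the maximum point probability. Concretely, write $X = \sum_{i=1}^n a_i\xi_i = Y + Z$, where $Y = \sum_{i\in I}a_i\xi_i$ and $Z = \sum_{i\notin I}a_i\xi_i$, and note that $Y$ and $Z$ are independent. Then for any $x\in\RR^d$,
\[
\Pr(X = x) = \sum_{z}\Pr(Z = z)\Pr(Y = x - z) \le \sum_{z}\Pr(Z = z)\,\rho((a_i)_{i\in I}) = \rho((a_i)_{i\in I}),
\]
where the sum ranges over the (countably many) values $z$ in the support of $Z$, and we used $\Pr(Y = x-z)\le \rho((a_i)_{i\in I})$ for every $z$ by definition of $\rho$. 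Taking the maximum over $x\in\RR^d$ on the left-hand side gives $\rho(a_1,\dots,a_n)\le \rho((a_i)_{i\in I})$.

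There is essentially no obstacle here — the only thing to be slightly careful about is that $Z$ may be supported on infinitely many points if the $a_i$ are in ``general position'', so the conditioning identity should be written as a (convergent) sum rather than a finite one; this is harmless since the bound $\rho((a_i)_{i\in I})$ is uniform in $z$. (Equivalently, one can phrase this as $\Pr(X=x) = \E_Z[\Pr(Y = x - Z)] \le \rho((a_i)_{i\in I})$.) An alternative one-line argument: couple $X$ with $Y$ by revealing $(\xi_i)_{i\notin I}$ first; this is a thinning of the distribution of $Y$ across the fibers of the map $x\mapsto$ (value of $Z$), and thinning cannot increase the sup of the mass function. Either phrasing yields the claim immediately.
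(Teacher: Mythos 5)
Your proof is correct and is exactly the standard conditioning argument the paper has in mind for this fact (which it states without proof): decompose $X=Y+Z$ with $Y,Z$ independent and average the uniform bound $\Pr(Y=x-z)\le\rho((a_i)_{i\in I})$ over the outcomes of $Z$. One tiny remark: the support of $Z$ is always finite (at most $2^{n-|I|}$ points), so your caution about a countably infinite sum is unnecessary, though harmless.
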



\begin{fact}\label{fact:concentration-product}
For any subset $I\subseteq \{1,\dots,n\}$ we have $\rho(a_1,\dots,a_n)\ge \rho((a_i)_{i\in I})\rho((a_i)_{i\notin I})$.
\end{fact}


\begin{fact}\label{fact:concentration-lipschitz}
We have $\rho(a_1,\dots,a_n)\ge \rho(a_1,\dots,a_{n-1})/2$.
\end{fact}


We also need a ``partite'' notion of point concentration.
\begin{defn}\label{def:partite-concentration}
Say that a sequence $(a_1,\dots,a_n)\in (\RR^d)^n$ is \emph{$(\lambda_1,\dots,\lambda_m)$-anti-concentrated} if there is a partition $I_1\sqcup \dots\sqcup I_m=\{1,\dots,n\}$ such that $\max_{x\in \RR^d}\Pr(\sum_{i\in I_j}a_i\xi_i=x)\le \lambda_j$ for all $j$. If $\lambda_1=\dots\lambda_m=\lambda$, we abbreviate this by saying $(a_1,\dots,a_n)$ is \emph{$m$-part $\lambda$-anti-concentrated}.
\end{defn}
For example, note that if each $a_i\ne 0$ then $(a_1,\dots,a_n)$ is $m$-part $O(m^{1/2}n^{-1/2})$-anti-concentrated. Indeed, this follows from the Erd\H os--Littlewood--Offord theorem, and any equipartition $I_1\sqcup \dots\sqcup I_m$ will do.

We next show that upper bounds on point concentration imply similar upper bounds on partite point concentration.
\begin{lem}\label{lem:part-concentration}
For $0\le \lambda_1,\dots,\lambda_m< 1/2$, if $\rho(a_1,\dots,a_n)\le \prod_{i=1}^m\lambda_i$,  then $(a_1,\dots,a_n)$ is  $(2\lambda_1,\dots,2\lambda_m)$-anti-concentrated.
\end{lem}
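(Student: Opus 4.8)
The plan is to build the partition $I_1 \sqcup \dots \sqcup I_m$ greedily, peeling off one block at a time, using \cref{fact:concentration-monotone,fact:concentration-product,fact:concentration-lipschitz} as the only tools. The key mechanism is the following one-step claim: if $\rho(b_1,\dots,b_N) \le \mu\nu$ with $\mu < 1/2$, then there is a subset $J \subseteq \{1,\dots,N\}$ with $\rho((b_i)_{i\in J}) \le 2\mu$ and $\rho((b_i)_{i\notin J}) \le \nu$. To prove this, start with $J = \emptyset$ and add indices one at a time. Writing $\rho_t$ for the point concentration of the first $t$ indices added, we have $\rho_0 = 1$, and by \cref{fact:concentration-lipschitz} each step decreases $\rho$ by at most a factor of $2$, i.e.\ $\rho_{t} \ge \rho_{t-1}/2$; also $\rho_t$ is non-increasing by \cref{fact:concentration-monotone}. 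So there is a first time $t^\ast$ at which $\rho_{t^\ast} \le 2\mu$; since $\mu < 1/2$ we have $2\mu < 1 = \rho_0$, so $t^\ast \ge 1$ and, by minimality, $\rho_{t^\ast - 1} > 2\mu$, hence $\rho_{t^\ast} \ge \rho_{t^\ast-1}/2 > \mu$. Let $J$ be the set of the first $t^\ast$ indices added; then $\rho((b_i)_{i\in J}) = \rho_{t^\ast} \le 2\mu$ and $\rho((b_i)_{i\in J}) > \mu$. Now by \cref{fact:concentration-product} applied to the partition $J \sqcup (\{1,\dots,N\}\setminus J)$,
$$\mu\nu \ge \rho(b_1,\dots,b_N) \ge \rho((b_i)_{i\in J})\,\rho((b_i)_{i\notin J}) > \mu\,\rho((b_i)_{i\notin J}),$$
so $\rho((b_i)_{i\notin J}) < \nu$, proving the claim.

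Given the claim, the lemma follows by induction on $m$. For $m = 1$ there is nothing to do. For $m > 1$, apply the claim with $\mu = \lambda_1$ and $\nu = \prod_{i=2}^m \lambda_i$ (noting $\lambda_1 < 1/2$) to split $\{1,\dots,n\} = I_1 \sqcup R$ with $\rho((a_i)_{i\in I_1}) \le 2\lambda_1$ and $\rho((a_i)_{i\in R}) \le \prod_{i=2}^m \lambda_i$. By the induction hypothesis applied to $(a_i)_{i\in R}$ with the thresholds $\lambda_2,\dots,\lambda_m$, there is a partition $R = I_2 \sqcup \dots \sqcup I_m$ with $\rho((a_i)_{i\in I_j}) \le 2\lambda_j$ for $2 \le j \le m$. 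Then $I_1 \sqcup I_2 \sqcup \dots \sqcup I_m = \{1,\dots,n\}$ witnesses that $(a_1,\dots,a_n)$ is $m$-part $(2\lambda_1,\dots,2\lambda_m)$-anti-concentrated.

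I expect the only subtle point to be the one-step claim — specifically making sure the greedy stopping time $t^\ast$ is well-defined and positive, which is exactly where the hypothesis $\lambda_i < 1/2$ is used (it guarantees the target $2\mu$ is below the starting value $1$, so we really do need to add at least one index, and then the Lipschitz bound of \cref{fact:concentration-lipschitz} forces $\rho_{t^\ast}$ to stay above $\mu$). Everything else is a routine combination of the three facts, so there is no serious obstacle.
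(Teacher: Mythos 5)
Your argument is correct and essentially identical to the paper's proof: both scan prefixes, using \cref{fact:concentration-monotone,fact:concentration-lipschitz} to locate a stopping point where the prefix concentration lands in $(\lambda_1,2\lambda_1]$, then apply \cref{fact:concentration-product} to bound the remaining coefficients and induct on $m$. The only caveat is that your one-step claim as stated (for arbitrary $\nu$) is slightly too general---the stopping time $t^\ast$ exists only when $\rho_N\le 2\mu$, i.e.\ essentially when $\nu\le 2$---but in your application $\nu=\prod_{i\ge 2}\lambda_i<1$, so $\rho_N\le\mu\nu<2\mu$ and the argument goes through exactly as in the paper.
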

\begin{proof}
We prove this by induction on $m$, noting that the case $m=1$ is trivial. Let $p_i= \rho({a_1,...,a_i})$ and let $q_i = \rho({a_{i+1},...,a_n})$. Note that $p_0 = 1$ and $p_n=\rho(a_1,\dots,a_n)$.

Now, for $i = 0,1,\dots,n-1$, we have $p_i \ge p_{i+1} \ge p_i /2$ by \cref{fact:concentration-monotone} and \cref{fact:concentration-lipschitz}. There exists by discrete continuity an $i$ for which $\lambda_1 \le p_i \le 2\lambda_1$. Note that $p_iq_i\le \rho(a_1,\dots,a_n)\le \prod_{i=1}^m\lambda_i$ by \cref{fact:concentration-product}, so $q_i\le \prod_{i=2}^m\lambda_i$. Therefore, by the induction hypothesis, $(a_{i+1},\dots,a_n)$ is $(2\lambda_2,\dots,2\lambda_m)$-anti-concentrated (with partition $I_2\sqcup\dots\sqcup I_m=\{i+1,\dots,n\}$, say). Taking $I_1=\{1,\dots,i\}$, the partition $I_1\sqcup \dots\sqcup I_m=\{1,\dots,n\}$ shows that $(a_1,\dots,a_n)$ is $(2\lambda_1,\dots,2\lambda_m)$-anti-concentrated, as desired.
\end{proof}

Finally, we will need the following theorem of Hal\'asz, which gives strong bounds for the Littlewood--Offord problem in the case where the coefficients ``robustly'' span the entire space $\RR^d$.

\begin{thm}[{\cite[Theorem~1]{Hal77}}]\label{thm:Halasz}
Let $a_1,\dots,a_n\in \RR^d$ be vectors such that no proper linear subspace of $\RR^d$ contains half of the vectors $a_i$. Let $\xi_1,\dots,\xi_n$ be i.i.d.\ Rademacher random variables, and let $X=a_1\xi_1+\dots+a_n\xi_n$. Then $\max_x\Pr(X=x)\le C_d n^{-d/2}$, where $C_d$ depends only on $d$.
\end{thm}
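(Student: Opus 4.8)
The plan is to use the Fourier-analytic (characteristic function) method, writing $\phi(t)=\E\, e^{2\pi i\langle t,X\rangle}=\prod_{i=1}^n\cos(2\pi\langle t,a_i\rangle)$. First I would reduce to the case $a_i\in\ZZ^d$ for all $i$. The point concentration $\max_x\Pr(X=x)$ depends on $(a_1,\dots,a_n)$ only through the set $Z\subseteq\{-1,0,1\}^n$ of coefficient vectors $c$ with $\sum_i c_ia_i=0$, since two sign patterns $\xi,\xi'$ give the same value of $X$ exactly when $\tfrac12(\xi-\xi')\in Z$, so the fibres of $\xi\mapsto X(\xi)$ — hence the maximum point mass — are governed by $Z$. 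For a prescribed $Z$, the tuples realizing it form a nonempty relatively open subset of the rational linear subspace $\{a\in(\RR^d)^n:\sum_i c_ia_i=0\text{ for all }c\in Z\}$, so it contains tuples with rational coordinates arbitrarily close to any given one; and the hypothesis that every $\lceil n/2\rceil$ of the $a_i$ span $\RR^d$ is an open condition. So after a sufficiently small such perturbation (preserving both $\max_x\Pr(X=x)$ and the hypothesis) and clearing denominators, we may assume $a_i\in\ZZ^d$.

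With $X$ now $\ZZ^d$-valued, Fourier inversion on the torus gives $\max_x\Pr(X=x)\le\int_{[0,1]^d}|\phi(t)|\,dt$, and the elementary inequality $|\cos(\pi u)|\le\exp(-\tfrac{\pi^2}{2}\operatorname{dist}(u,\ZZ)^2)$ upgrades this to $\max_x\Pr(X=x)\le\int_{[0,1]^d}e^{-c_0\psi(t)}\,dt$, where $c_0=\pi^2/2$ and $\psi(t)=\sum_{i=1}^n\operatorname{dist}(2\langle t,a_i\rangle,\ZZ)^2$. By the layer-cake formula, $\int_{[0,1]^d}e^{-c_0\psi}\,dt=\int_0^\infty c_0e^{-c_0s}\vol\{t\in[0,1]^d:\psi(t)\le s\}\,ds$, so everything reduces to the uniform volume bound $\vol\{t\in[0,1]^d:\psi(t)\le s\}\le C_d(s/n)^{d/2}$ for all $s>0$, which integrates to $O_d(n^{-d/2})$.

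This volume bound is the crux, and I would deduce it from two facts. \emph{(i)} The hypothesis produces many independent $d$-tuples: picking $i_1$ arbitrarily ($n$ choices, as every $a_i\ne0$) and, given linearly independent $a_{i_1},\dots,a_{i_j}$ spanning a proper subspace $V_j$, picking $i_{j+1}$ with $a_{i_{j+1}}\notin V_j$ (more than $n/2$ choices, since fewer than $\lceil n/2\rceil$ of the $a_i$ lie in $V_j$ and such indices are automatically new), one obtains more than $2^{1-d}n^d$ ordered linearly independent $d$-tuples, and deleting any $\lfloor\epsilon n\rfloor$ indices destroys at most $d\epsilon n^d$ of them. \emph{(ii)} For fixed linearly independent $a_{j_1},\dots,a_{j_d}\in\ZZ^d$ and $0<\theta<1/2$, one has the exact identity
$$\vol\{t\in[0,1]^d:\operatorname{dist}(2\langle t,a_{j_\ell}\rangle,\ZZ)\le\theta\text{ for all }\ell\}=(2\theta)^d,$$
because $t\mapsto Bt\bmod\ZZ^d$, with $B$ the integer matrix whose rows are $2a_{j_1},\dots,2a_{j_d}$, is a surjective endomorphism of the torus $(\RR/\ZZ)^d$ (as $B$ is integral and invertible over $\RR$), hence pushes Haar measure to Haar measure, and the displayed set is the preimage of a box of measure $(2\theta)^d$. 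To combine these: if $\psi(t)\le s$ then at most $s/\theta^2$ indices $i$ have $\operatorname{dist}(2\langle t,a_i\rangle,\ZZ)>\theta$, so choosing $\theta=\sqrt{s/(\epsilon n)}$ with $\epsilon=2^{-d}/d$ (the case $\theta\ge1/2$ being trivial, as then $(s/n)^{d/2}$ is bounded below in terms of $d$) leaves at least $2^{-d}n^d$ ordered independent $d$-tuples all of whose coordinates $j_\ell$ satisfy $\operatorname{dist}(2\langle t,a_{j_\ell}\rangle,\ZZ)\le\theta$; integrating this count over $t$ and invoking (ii) gives $2^{-d}n^d\cdot\vol\{t:\psi(t)\le s\}\le n^d(2\theta)^d$, whence $\vol\{t:\psi(t)\le s\}=O_d((s/n)^{d/2})$.

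I expect the main obstacle to be exactly this volume bound, specifically getting the exponent $d/2$ uniformly in the $a_i$. The naive route — bounding $\psi(t)$ near the origin below by $4\,t^\top(\sum_ia_ia_i^\top)t$ and estimating a Gaussian integral — fails when the $a_i$ are geometrically clustered (many nearly parallel), since then $\sum_ia_ia_i^\top$ is badly conditioned and the ellipsoid $\{t:t^\top(\sum_ia_ia_i^\top)t\le s\}$ is fat in some directions. The fix is to replace this by the counting argument of (i) together with the exact torus-volume identity of (ii), which is completely insensitive to the sizes and mutual determinants of the chosen vectors; putting the hypothesis in the robust form ``many independent $d$-tuples survive deleting a small fraction of the indices'' is the one genuinely delicate point, along with the (routine but not entirely trivial) reduction to integer vectors.
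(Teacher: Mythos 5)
The paper does not actually prove this statement: it is quoted directly from Hal\'asz's 1977 paper, so there is no internal proof to compare against, and the relevant question is whether your blind argument is sound — as far as I can check, it is, and it is a complete, self-contained proof. You follow the classical characteristic-function strategy that also underlies Hal\'asz's original argument: bound $\max_x\Pr(X=x)$ by $\int_{[0,1]^d}|\phi(t)|\,dt$, dominate $|\phi(t)|$ by $e^{-c_0\psi(t)}$ with $\psi(t)=\sum_i\operatorname{dist}(2\langle t,a_i\rangle,\ZZ)^2$, and reduce everything to the level-set estimate $\vol\{t:\psi(t)\le s\}=O_d((s/n)^{d/2})$. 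Where you genuinely diverge from Hal\'asz is in how that estimate is obtained: instead of his sumset/doubling analysis of the level sets, you first reduce to integer coefficient vectors via a rational perturbation that preserves the collision pattern $Z$ and the (open) spanning hypothesis exactly, and then exploit the fact that an integer matrix with nonzero determinant induces a measure-preserving endomorphism of the torus, so the volume of $\{t:\operatorname{dist}(2\langle t,a_{j_\ell}\rangle,\ZZ)\le\theta\ \forall\ell\}$ is exactly $(2\theta)^d$; combined with the robust count of ordered independent $d$-tuples surviving deletion of $\epsilon n$ indices and Markov's inequality with $\theta=\sqrt{s/(\epsilon n)}$ (the case $\theta\ge 1/2$ being trivial), this gives the volume bound with constants depending only on $d$. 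Both the perturbation step and the pushforward identity check out. One small slip: the hypothesis does not force every $a_i\ne 0$ (the zero vector lies in every proper subspace, so it only forces fewer than $n/2$ of the $a_i$ to vanish), so the first index of your tuple count has more than $n/2$ admissible choices rather than $n$; this replaces $2^{1-d}n^d$ by $2^{-d}n^d$ and $\epsilon=2^{-d}/d$ by $2^{-d-1}/d$, a purely cosmetic adjustment of constants. What your route buys is an elementary, exact torus-volume computation in place of Hal\'asz's more delicate level-set geometry; the price is the extra (but correctly executed) reduction to integer vectors, which one could avoid by invoking Esseen's inequality for general real coefficients at the cost of losing the exactness of the volume identity.
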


\section{A weighted version of the K\H ov\'ari--S\'os--Tur\'an theorem}
In this section we prove a weighted version of the K\H ov\'ari--S\'os--Tur\'an theorem, which we will use frequently to bound the total weight of various graphs and hypergraphs.
\begin{thm}\label{lem:weighted-KST}Consider a bipartite graph $G$ with bipartition $A\cup B$, where the vertices
in $A,B$ are weighted (with total weights $w(A)=w(B)=1$). Let $w(G)=\sum_{a\sim b}w(a)w(b).$
Assume every set of $t$ vertices in $A$ has common neighbourhood
with total weight at most $q$, and assume every vertex in $A$ has weight at most $\rho$. Then
\[
w(G)\le q^{1/t}+t\rho.
\]
\end{thm}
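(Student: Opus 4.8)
The plan is to mimic the classical double-counting proof of the K\H{o}v\'ari--S\'os--Tur\'an theorem, but carried out with weights and with Jensen's inequality (in its integral/probabilistic form) replacing the usual convexity of binomial coefficients. First I would normalize: think of $w$ restricted to $A$ and to $B$ as probability distributions, and write $w(G) = \sum_{b \in B} w(b)\, d(b)$, where $d(b) := \sum_{a \sim b} w(a) = w(N(b))$ is the weighted degree of $b$ seen from the $A$-side. The quantity to control is then $\mathbb{E}_{b \sim w}[d(b)]$, and the hypothesis says something about $t$-fold intersections of neighbourhoods on the $A$-side.

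The key step is to count ``weighted cherries'': tuples $(a_1, \dots, a_t; b)$ with each $a_i \sim b$, weighted by $w(a_1)\cdots w(a_t)\, w(b)$. Summing over $b$ first, this total is $\sum_{b} w(b) \left(\sum_{a \sim b} w(a)\right)^t = \mathbb{E}_{b\sim w}\big[d(b)^t\big]$. On the other hand, summing over the $t$-tuple $(a_1,\dots,a_t)$ first, the inner sum $\sum_{b : a_i \sim b \ \forall i} w(b)$ is exactly the total weight of the common neighbourhood of $\{a_1,\dots,a_t\}$. When the $a_i$ are distinct this is at most $q$ by hypothesis; the remaining ``degenerate'' tuples (where some $a_i = a_j$) contribute a total weight at most (number of such degenerate patterns) times $\max_a w(a) \le \rho$ times $1$, and more carefully, fixing which coordinates coincide reduces to a sum over a $t'$-tuple with $t' < t$ of a common-neighbourhood weight that is at most $1$, multiplied by a factor of $\rho$ coming from the repeated vertex; a crude bound gives that the degenerate contribution is at most $C_t \rho$ for some constant, but in fact one can arrange the bookkeeping so the bound is simply $\le t\rho$ as claimed. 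Thus
\[
\mathbb{E}_{b\sim w}\big[d(b)^t\big] \le q + t\rho .
\]
Here I should be a little careful about the exact constant: splitting the $t$-tuples into ``all distinct'' (weight $\le q$ each, total weight of all-distinct tuples $\le 1$) and ``not all distinct'' and bounding the latter — I would choose the grouping of coincidences so that the union bound over the $\le t$ ways for a *first* repeat to occur, each contributing $\le \rho$, yields exactly $t\rho$; this is the one spot that needs to be written out rather than waved at.

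Finally, apply Jensen / power-mean: since $x \mapsto x^t$ is convex for $t \ge 1$,
\[
w(G)^t = \big(\mathbb{E}_{b\sim w}[d(b)]\big)^t \le \mathbb{E}_{b\sim w}\big[d(b)^t\big] \le q + t\rho \le \big(q^{1/t} + t\rho\big)^t,
\]
where the last inequality uses $q + t\rho \le (q^{1/t})^t + (t\rho)^t \le (q^{1/t} + t\rho)^t$ (valid since both summands are nonnegative and $t \ge 1$, expanding the right side). Taking $t$-th roots gives $w(G) \le q^{1/t} + t\rho$, as desired.

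**Main obstacle.** The genuinely routine part is the Jensen step; the only real care needed is the combinatorial accounting of the degenerate (non-injective) $t$-tuples on the $A$-side, to make sure the error term comes out as the clean $t\rho$ rather than some larger constant $C_t\rho$. I would handle this by conditioning on the position of the first repeated coordinate: if coordinates $1,\dots,j-1$ are the injective prefix and coordinate $j$ duplicates an earlier one, then summing the $w(b)$-weight over $b$ adjacent to all of them is at most the weight over $b$ adjacent to $a_1,\dots,a_{j-1}$ alone, which is $\le 1$; factoring out the $w(a_i)$ over the free coordinates (which sum to $1$ each) and the extra repeated $w(a_j) \le \rho$, each of the $\le t$ choices of $j$ contributes $\le \rho$, giving $t\rho$ total. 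If one is willing to accept a worse constant $C_t\rho$ in place of $t\rho$ the argument is completely mechanical; getting the stated $t\rho$ is the only place demanding a moment's thought.
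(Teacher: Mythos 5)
Your argument has a genuine gap, and it is located exactly where you flagged uncertainty --- but the fatal step is the last one, not the bookkeeping of degenerate tuples. The moment/decoupling method gives, at best, $w(G)\le\bigl(q+C_t\rho\bigr)^{1/t}$ (and in fact your "first repeat" accounting yields $C_t=\binom t2$, not $t$: conditioning on the first repeat occurring at position $j$, the repeated coordinate can equal any of the $j-1$ earlier ones, so that case contributes $(j-1)\rho$, and summing over $j$ gives $\binom t2\rho$). The real problem is your final chain $q+t\rho\le (q^{1/t})^t+(t\rho)^t\le (q^{1/t}+t\rho)^t$: the first inequality requires $t\rho\le(t\rho)^t$, i.e.\ $t\rho\ge1$, which is precisely the trivial regime. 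In the interesting regime $t\rho<1$ it is false --- take $t=2$, $q=0$, $\rho=0.1$, where $q+t\rho=0.2$ but $(q^{1/t}+t\rho)^t=0.04$. More fundamentally, the bound $(q+C_t\rho)^{1/t}$ that your method actually produces does \emph{not} imply the additive bound $q^{1/t}+t\rho$ when $\rho$ is substantially larger than $q$ (again $q=0$: you get $(C_t\rho)^{1/t}\gg t\rho$). The paper makes exactly this point in \cref{rem:decoupling}: the H\"older/decoupling route gives $w(G)\le\bigl(q+\binom t2\rho\bigr)^{1/t}$, which suffices for most applications but not for the second part of \cref{thm:variousS}, which is why the stronger additive form is stated and proved differently.

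The paper's proof avoids the $t$-th root swallowing the $\rho$-term by a different reduction: since $w(G)$ is linear in the $A$-side weights with the constraints $w(A)=1$, $0\le w(a)\le\rho$, its maximum is attained when all but one $A$-weight lies in $\{0,\rho\}$; deleting the exceptional vertex (cost at most $\rho$) and rescaling turns $A$ into an unweighted set of $\lfloor1/\rho\rfloor$ vertices, to which a one-sided weighted K\H ov\'ari--S\'os--Tur\'an lemma (proved by the standard double count of distinct $t$-subsets and convexity of $x\mapsto\binom xt$) is applied, giving $q^{1/t}/\rho+t-1$ before rescaling and hence $q^{1/t}+t\rho$ overall. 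If you want to salvage your write-up, either prove the weaker $\bigl(q+\binom t2\rho\bigr)^{1/t}$ bound and note it is not the stated theorem, or incorporate an extreme-point reduction of the $A$-weights along the paper's lines.
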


We remark that the unweighted K\H ov\'ari--S\'os--Tur\'an theorem  follows from \cref{lem:weighted-KST} if we set $w(a)=1/|A|$ for each $a\in A$ and $w(b)=1/|B|$ for each $b\in B$.

\begin{rem}\label{rem:decoupling}
A weighted bipartite graph $G$ as in \cref{lem:weighted-KST} can be viewed as encoding an event $\mathcal E(X_1,X_2)$ depending on two random variables $X_1,X_2$. Given independent copies $X_1^{(1)},\ldots,X_1^{(t)}$ of $X_1$, it follows from H\"older's inequality (or alternatively Jensen's inequality) that
$$\Pr(\mathcal E(X_1,X_2))\le \Pr(\mathcal E(X_1^{(1)},X_2)\cap \ldots \cap \mathcal E(X_1^{(t)},X_2))^{1/t}.$$
This can be interpreted as a \emph{decoupling inequality} (see for example \cite[Lemma~4.7]{CTV06} for a related inequality). Let $\mathcal F$ be the event that some $X_1^{(i)}=X_1^{(j)}$, so in the setting of \cref{lem:weighted-KST},
\begin{align*}
    \Pr(\mathcal E(X_1^{(1)},X_2)\cap \ldots \cap \mathcal E(X_1^{(t)},X_2))\le \Pr(\mathcal E(X_1^{(1)},X_2)\cap \ldots \cap \mathcal E(X_1^{(t)},X_2)\,|\,\overline{\mathcal F})+\Pr(\mathcal F)\le q+\binom t 2\rho,
\end{align*}
and therefore $w(G)\le \left(q+\binom t 2\rho\right)^{1/t}$. This is comparable to the bound $q^{1/t}+t\rho$ in \cref{lem:weighted-KST}, unless $\rho$ is substantially larger than $q$. We note that this weaker inequality is sufficient for all applications in this paper, except the second part of \cref{thm:variousS}.
\end{rem}

To prove \cref{lem:weighted-KST}, first we prove a version where $A$ is not weighted.

\begin{lem}\label{lem:weighted-KST-weak}Consider a bipartite graph $G=(A,B)$ where the vertices
in $B$ are weighted (with total weight 1). Let $w(G)=\sum_{a\sim b}w(b).$
Assume every set of $t$ vertices in $A$ has common neighbourhood
with total weight at most $q$. Then
\[
w(G)\le q^{1/t}n+t-1.
\]
\end{lem}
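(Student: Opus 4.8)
The plan is to run the standard double-counting argument behind the K\H ov\'ari--S\'os--Tur\'an theorem, but carefully track the weights on $B$. Write $n=|A|$, and for each $b\in B$ let $\deg(b)$ denote its degree into $A$. First I would count, with multiplicity, the number of ``cherries'' — that is, pairs $(T,b)$ where $T\subseteq A$ with $|T|=t$ and $b$ is adjacent to every vertex of $T$ — but weighting each such pair by $w(b)$. On one hand this weighted count equals $\sum_{b\in B} w(b)\binom{\deg(b)}{t}$. On the other hand, grouping by $T$ first, it equals $\sum_{T}\big(\text{weight of common neighbourhood of }T\big)\le \binom{n}{t} q$, using the hypothesis that every $t$-set in $A$ has common neighbourhood of weight at most $q$.

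Next I would lower-bound $\sum_{b} w(b)\binom{\deg(b)}{t}$ in terms of $w(G)=\sum_{b} w(b)\deg(b)$. Since $\sum_b w(b)=1$, the function $x\mapsto \binom{x}{t}$ is convex, and by Jensen's inequality applied to the probability measure $w$ on $B$ we get $\sum_b w(b)\binom{\deg(b)}{t}\ge \binom{\sum_b w(b)\deg(b)}{t}=\binom{w(G)}{t}$, where $\binom{x}{t}$ is interpreted as the polynomial $x(x-1)\cdots(x-t+1)/t!$ extended to real $x$ (and one uses that it is convex and increasing for $x\ge t-1$; for $w(G)\le t-1$ the claimed bound is trivial since $q^{1/t}n\ge 0$). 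Combining the two estimates gives $\binom{w(G)}{t}\le \binom{n}{t}q\le \frac{n^t}{t!}q$, hence $w(G)(w(G)-1)\cdots(w(G)-t+1)\le q\,n^t$.

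Finally I would extract the stated bound from this polynomial inequality. If $w(G)\le t-1$ we are done as noted; otherwise $w(G)-i\ge w(G)-t+1$ for all $0\le i\le t-1$ is not quite what we want, so instead use the elementary fact that $w(G)(w(G)-1)\cdots(w(G)-t+1)\ge (w(G)-t+1)^t$ when $w(G)\ge t-1$, giving $(w(G)-t+1)^t\le q n^t$ and therefore $w(G)\le q^{1/t}n+t-1$, as claimed.

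The only mild subtlety — the ``main obstacle'', such as it is — is making the convexity step rigorous with non-integer arguments: one must use the extension of $\binom{x}{t}$ to a real polynomial and check it is convex on the relevant range, and handle the degenerate regime $w(G)<t-1$ separately (where the inequality holds trivially). Everything else is routine double counting. Once Lemma 3.3 is in hand, the full \cref{lem:weighted-KST} follows by a standard weighting/sampling trick on $A$ (replacing $A$ by a multiset or by a random sample reflecting the weights $w(a)$, with the $t\rho$ term absorbing the contribution of repeated samples), but that is the next lemma, not this one.
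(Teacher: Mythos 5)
Your proposal is correct and is essentially identical to the paper's own proof: the same weighted double count $\sum_{b}w(b)\binom{d_b}{t}\le q\binom{n}{t}$, the same Jensen/convexity step (with the same caveat about restricting to $x\ge t-1$ and treating $w(G)\le t-1$ trivially), and the same elementary bounds $\binom{w(G)}{t}\ge (w(G)-t+1)^t/t!$ and $\binom{n}{t}\le n^t/t!$ to extract $w(G)\le q^{1/t}n+t-1$. Nothing to add.
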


\begin{proof}We assume that $w(G)\ge t-1$, or else we are immediately done. For a vertex $b\in B$, let $d_{b}$ be the degree of $b$. We
have $\sum_{b\in B}w(b)\binom{d_{b}}{t}=\sum_{a_{1},\dots,a_{t}}w(N(a_{1},\dots,a_{t}))\le q\binom{n}{t}$,
where the latter sum is over all $t$-sets of distinct vertices $a_{1},\dots,a_{t}\in A$. Now, the function $x\mapsto \binom x t$ is convex for $x\ge t-1$ so by Jensen's inequality we have \[
\binom{w(G)}{t}=\binom{\sum_{b\in B}w(b)d_b}{t}\le q\binom{n}{t}.
\]
Observing that $\binom{w(G)}t>(w(G)-t+1)^t/t!$ and $\binom n t<n^t/t!$, the desired result follows.\end{proof}

Now we deduce \cref{lem:weighted-KST}.

\begin{proof}[Proof of \cref{lem:weighted-KST}]
Note that $w\left(G\right)$ can be viewed as a linear function of
$w\in\left[0,\rho\right]^{A}$. So, the maximum value of $w\left(G\right)$,
under the constraint $w(A)=1$ and holding $w|_{B}$ constant, is attained when
$w\left(u\right)\in\left\{ 0,\rho\right\} $ for all but one $u\in A$.

We can therefore assume that after deleting all the weight-zero vertices in $A$, and possibly one additional vertex, and multiplying all weights in $A$ by $1/\rho$, we arrive at a graph $G'$ with parts $A'$ and $B$, where each of the $|A'|=\floor{1/\rho}$ vertices in $A'$ are unweighted, and the vertices in $B$ have the same weights as in $G$.

By \cref{lem:weighted-KST-weak}, we have $w(G')\le q^{1/t}(1/\rho)+t-1$. Multiplying by $\rho$ and observing that the deleted
vertices can contribute a weight of at most $\rho$,
the desired result follows.
\end{proof}

\section{Sets with the generic intersection property}
In this section we prove \cref{thm:variousS}.
 The following proposition will actually imply both parts of \cref{thm:variousS}.

\begin{prop}\label{prop:rel-GIP}
Let $S\subseteq \RR^d$ have the property that every $t$ translates of $S$ intersect in at most $M$ points. Suppose $(a_1,\dots,a_n)$ is $(\lambda_1,\lambda_2)$-anti-concentrated. Then for $X=a_1\xi_1+\dots+a_n\xi_n$, where $\xi_1,\dots,\xi_n$ are independent Rademacher random variables, we have $\Pr(X\in S)\le M^{1/t}\lambda_2^{1/t}+t\lambda_1$.
\end{prop}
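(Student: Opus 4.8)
The plan is to set up the weighted bipartite incidence graph from \cref{subsec:graphs} and apply the weighted K\H ov\'ari--S\'os--Tur\'an theorem (\cref{lem:weighted-KST}). Since $(a_1,\dots,a_n)$ is $(\lambda_1,\lambda_2)$-anti-concentrated, fix a partition $\{1,\dots,n\}=I_1\sqcup I_2$ with $\max_{x}\Pr(\sum_{j\in I_i}a_j\xi_j=x)\le \lambda_i$ for $i=1,2$, and set $X_i=\sum_{j\in I_i}a_j\xi_j$, so that $X=X_1+X_2$. Let $V_i$ be the support of $X_i$, weighted by $w(x)=\Pr(X_i=x)$; each part has total weight $1$. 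Form the bipartite graph $G$ on $V_1\sqcup V_2$ with an edge $xy$ whenever $x+y\in S$. As noted in the excerpt, $\Pr(X\in S)=\sum_{xy\in E(G)}w(x)w(y)=w(G)$.

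Next I would verify the two hypotheses of \cref{lem:weighted-KST}, with the roles ``$A$'' $=V_1$ and ``$B$'' $=V_2$. First, every vertex $x\in V_1$ has weight $w(x)=\Pr(X_1=x)\le \lambda_1$, so we may take the ``$\rho$'' in \cref{lem:weighted-KST} to be $\lambda_1$. Second, I claim every set of $t$ vertices $x_1,\dots,x_t\in V_1$ has common neighbourhood in $V_2$ of total weight at most $M$ (well, at most $M\lambda_2$, which is the sharper bound we want; more on this in a moment). A vertex $y\in V_2$ is a common neighbour of $x_1,\dots,x_t$ exactly when $x_i+y\in S$ for all $i$, i.e.\ $y\in \bigcap_{i=1}^t (S-x_i)$. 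Since $x_1,\dots,x_t$ are distinct (they are distinct vertices), the sets $S-x_1,\dots,S-x_t$ are $t$ distinct translates of $S$, so by hypothesis their intersection has at most $M$ points. Hence the common neighbourhood consists of at most $M$ vertices of $V_2$; combined with the bound $w(y)=\Pr(X_2=y)\le \lambda_2$ on each such vertex, the common neighbourhood has total weight at most $M\lambda_2$. So we may take ``$q$'' $=M\lambda_2$ in \cref{lem:weighted-KST}.

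Plugging into \cref{lem:weighted-KST} gives $w(G)\le (M\lambda_2)^{1/t}+t\lambda_1 = M^{1/t}\lambda_2^{1/t}+t\lambda_1$, which is exactly the claimed bound on $\Pr(X\in S)$. The only mild subtlety — and the step I'd be most careful about — is whether to use the crude bound $q\le M$ (just counting points) or the sharper $q\le M\lambda_2$ (weighting each of the $\le M$ points by its weight $\le\lambda_2$); the statement we are proving uses the latter, so the weight bound $w(y)\le \lambda_2$ must genuinely be invoked. There is also the trivial edge case that when $1/\lambda_1<1$ or the supports are small, the graph is degenerate, but \cref{lem:weighted-KST} already handles all cases uniformly, so no separate argument is needed. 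Everything else is bookkeeping: translating ``$(\lambda_1,\lambda_2)$-anti-concentrated'' into the weight bounds, and translating ``$t$ translates intersect in $\le M$ points'' into the common-neighbourhood bound via the incidence-geometry interpretation already spelled out in \cref{subsec:graphs}.
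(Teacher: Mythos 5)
Your proof is correct and is essentially the same as the paper's: both build the weighted bipartite sum graph $H_{X,S}(I_1,I_2)$ from the anti-concentration partition, observe that the common neighbourhood of $t$ distinct vertices of $V_1$ is $\bigcap_{i=1}^t(S-x_i)$ and hence has at most $M$ vertices of weight at most $\lambda_2$ each, and apply \cref{lem:weighted-KST} with $\rho=\lambda_1$ and $q=M\lambda_2$. Your extra care about using $q=M\lambda_2$ rather than $q\le M$ is exactly the right point, and matches the paper.
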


\begin{proof}
Let $\{1,\dots,n\}=I_1\sqcup I_2$ be a partition witnessing the fact that $(a_1,\dots,a_n)$ is  $(\lambda_1,\lambda_2)$-anti-concentrated, and let $H=H_{X,S}(I_1,I_2)$ be the bipartite sum graph from \cref{def:incidence-hypergraph}.

Now, the common neighbourhood of a $t$-tuple of vertices $x_1,\dots,x_t\in A$ is the set of all $y\in B$ lying in the intersection $(S-x_1)\cap \dots\cap (S-x_t)$, so by assumption this common neighbourhood has at most $M$ vertices, and therefore has total weight at most $M\lambda_2$. The desired result then follows from  \cref{lem:weighted-KST}.
\end{proof}

We now prove \cref{thm:variousS}.

\begin{proof}[Proof of \cref{thm:variousS}]
Recall that $S$ having the generic intersection property means that there is a constant $D_S$ such that for any affine subspace $\Lambda\subset \mathbb{R}^d$ (including $\Lambda=\mathbb{R}^d$) and distinct $x_1,\dots,x_{\dim \Lambda}\in \mathbb{R}^d$ we have
$$\left|\bigcap_{i=1}^{\dim \Lambda}\left((S\cap \Lambda)-x_i\right)\right|\le D_S.$$

For the first part of the theorem, we show that $\Pr(X\in S)=O(n^{-1/2})$, where the implicit constant only depends on $d,D_S$. We induct on $d$. The result for $d=1$ follows from the Erd{\H o}s--Littlewood--Offord theorem, so assume $d\ge 2$ and that the result is true for all dimensions less than $d$.

Let $C_d$ be as in \Cref{thm:Halasz}. First, if $(a_1,\dots,a_n)$ is $2$-part $\lambda$-anti-concentrated, for $\lambda=C_d/\floor{n/2}^{-d/2}=O(n^{-d/2})$, then we can apply \cref{prop:rel-GIP} to obtain $\Pr(X\in S)=O((n^{-d/2})^{1/d})=O(n^{-1/2})$ as desired.

Otherwise, if $\rho(a_1,\dots,a_{\floor{n/2}})> C_d/\floor{n/2}^{-d/2}$ or $\rho(a_{\floor{n/2}+1},\dots,a_{n})> C_d/\floor{n/2}^{-d/2}$ then by \Cref{thm:Halasz} at least $\floor{n/2}/2$ of the coefficients $a_i$ lie in a linear subspace $W\subset \RR^d$ of dimension $d-1$ (without loss of generality, suppose this is the case for the first  $\floor{n/2}/2$ coefficients).
Let $X_1=\sum_{i\le \lfloor n/2 \rfloor/2} a_i\xi_i$ and $X_2=\sum_{i>\lfloor n/2 \rfloor/2} a_i\xi_i$. Then
$$\Pr(X\in S)=\Pr(X_1\in S-X_2)=\Pr(X_1\in (S-X_2)\cap W).$$ But, if we condition on any outcome of $X_2$, the set $(S-X_2)\cap W \subseteq W$ itself satisfies the generic intersection property (inside $W\cong \RR^{d-1}$), so by the inductive hypothesis we know that
$$\Pr(X_1\in (S-X_2)\cap W)=O((\lfloor n/2 \rfloor/2)^{-1/2})=O(n^{-1/2}),$$
implying the desired result.

For the second part of the theorem, we have to show that $\Pr(X\in S)=O(\rho^{1/(d+1)})$, where $\rho=\max_{x\in\mathbb{R}^d}\Pr(X=x)$. To do this we observe that $(a_1,\dots,a_n)$ is $(2\rho^{1/(d+1)},2\rho^{d/(d+1)})$-anti-concentrated, by \Cref{lem:part-concentration}, and then we apply 
\cref{prop:rel-GIP}. (We remark that here we actually only used the generic intersection property for $\Lambda=\mathbb{R}^d$).
\end{proof}

\section{An extremal theorem for bounded-complexity hypergraphs}
\label{sec:comphypergraph}
Recall the definition of hypergraph complexity from \cref{def:hypergraph-complexity}. We prove the following K\H ov\'ari--S\'os--Tur\'an-type theorem for hypergraphs of bounded complexity, and deduce \cref{thm:relative} from it.
\begin{thm}
\label{thm:hypercomprho}
If $H$ is an $m$-partite $m$-uniform hypergraph with complexity $M$ on the vertex sets $V_1,\dots,V_m$, and there is a vertex weighting function $w$ such that $w(V_j)=1$ and $w(x)\le \lambda$ for all vertices $x$, then
$$w(H)\le C_{M,m} \lambda^{1/2^{m-1}},$$
for some $C_{M,m}$ depending only on $M$ and $m$.
\end{thm}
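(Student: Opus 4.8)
The natural approach is induction on $m$, with \cref{lem:weighted-KST} (the weighted K\H ov\'ari--S\'os--Tur\'an theorem) providing both the base case $m=1$ — where "complexity $M$" just means at most $M$ edges, so $w(H)\le M\lambda$ trivially — and the engine of the inductive step. For $m>1$, by definition of complexity we may write $H=H_1\cup\dots\cup H_M$, and since $w(H)\le\sum_i w(H_i)$ it suffices to bound $w(H_i)$ for a single piece; so we may as well assume $M=1$, i.e.\ that the common link of any two $x,y\in V_1$ is a $(k-1)$-uniform hypergraph of complexity $M$ on $V_2,\dots,V_k$ (here I am using $k=m$; I'll write $m$ throughout). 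The plan is to group the weight of $H$ by its "$V_1$-restriction": for each $x\in V_1$, let $L_x$ be the link of $x$, a weighted $(m-1)$-uniform $(m-1)$-partite hypergraph on $V_2,\dots,V_m$ with the same vertex weights, and note $w(H)=\sum_{x\in V_1}w(x)\,w(L_x)$.

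First I would set up an auxiliary bipartite graph to apply \cref{lem:weighted-KST} "at the top level": let $A=V_1$ with weights $w$, and let $B$ be (a weighted version of) the set of edges of the $(m-1)$-uniform hypergraph $H_S'$ on $V_2,\dots,V_m$ consisting of all potential edges, where an $(m-1)$-edge $e$ is joined to $x\in V_1$ iff $e\in L_x$, and $e$ is given weight $w(e):=w(v_2)\cdots w(v_m)$ (renormalised). Then $\sum_{x\sim e}w(x)w(e)=w(H)$ after renormalisation, the maximum weight of a vertex in $A$ is at most $\lambda$, and — crucially — the common neighbourhood in $B$ of any two vertices $x,y\in A$ is exactly the common link $L_x\cap L_y$, which by the complexity-$1$ hypothesis has complexity $M$ as a $(m-1)$-uniform hypergraph, so by the inductive hypothesis has total (renormalised) weight at most $C_{M,m-1}\lambda^{1/2^{m-2}}$. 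Plugging $t=2$, $q=C_{M,m-1}\lambda^{1/2^{m-2}}$, $\rho=\lambda$ into \cref{lem:weighted-KST} gives $w(H)\le (C_{M,m-1}\lambda^{1/2^{m-2}})^{1/2}+2\lambda=O(\lambda^{1/2^{m-1}})$, which is exactly the claimed exponent; summing over the $M$ pieces $H_i$ multiplies the constant by $M$, giving $C_{M,m}$.

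One technical point to handle carefully is the renormalisation: in \cref{lem:weighted-KST} we need $w(A)=w(B)=1$, but $w(V_1)=1$ already and the total weight of $B$ is $\prod_{j=2}^m w(V_j)\cdot(\text{number of edges considered})$ — actually we should only include $(m-1)$-edges that lie in $L_x$ for at least one $x$, or even just take $B$ to be all of $V_2\times\dots\times V_m$ with product weights, so $w(B)=1$ automatically. I would also need the elementary observation that a sub-weighting obtained by restricting to a common link still has vertex weights bounded by $\lambda$ and total weight on each part at most $1$ (monotonicity), so the inductive hypothesis applies verbatim after renormalising each part to total weight exactly $1$ — and renormalising only decreases weights here, so the bound is preserved. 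The main obstacle is bookkeeping the normalisations cleanly through the recursion so that the constant $C_{M,m}$ comes out depending only on $M,m$; the extremal-graph-theory content is entirely carried by \cref{lem:weighted-KST} applied with $t=2$.
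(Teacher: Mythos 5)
Your proposal is correct and follows essentially the same route as the paper: induct on $m$, split $H=H_1\cup\dots\cup H_M$ as in the definition of complexity, flatten each $H_i$ into a bipartite graph between $V_1$ and $V_2\times\dots\times V_m$ with product weights, and apply \cref{lem:weighted-KST} with $t=2$, $\rho=\lambda$, and $q$ the inductive bound on the weight of a common link. (One small phrasing point: passing to a single piece $H_i$ is not the same as assuming $M=1$, since the links of $H_i$ still have complexity $M$ -- but your argument correctly invokes the inductive constant $C_{M,m-1}$ there, and the renormalisation concerns are moot because the common links live on the full parts $V_2,\dots,V_m$, each already of total weight $1$.)
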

We remark that there are certain similarities between \cref{thm:hypercomprho} and Costello, Tao and Vu's decoupling lemma~\cite[Lemma~6.3]{CTV06}.
\begin{proof}
We proceed by induction. The result is clearly true for $m=1$, so, we consider some $m\ge 2$ and assume the result is true for uniformities less than $m$.

By the definition of hypergraph complexity, $H$ is a union of at most $C$ hypergraphs $H_i$, in such a way that the common link $N_i(x,y)$ of any $x,y\in V_1$ in $H_i$ is an $(m-1)$-partite $(m-1)$-uniform hypergraph of complexity at most $C$. So, by the inductive hypothesis $w(N_i(x,y))=O(\lambda^{1/2^{m-2}})$. Consider the weighted bipartite graph $G_i$ with vertex sets $V_1$ and $V_2\times \dots \times V_m$, with an edge from $v_1$ to $(v_2,\dots,v_m)$ if $\{v_1,\dots,v_m\}\in E(G_i)$. Define the weighting function $w'$ by $w'(v_1)=w(v_1)$ for all $v_1\in V_1$ and $w'((v_2,\dots,v_m))=w(v_2)\dots w(v_m)$ for all $(v_2,\dots,v_m)\in V_2\times \dots \times V_m$. Then, in $G_i$, the common neighbourhood of $x,y\in V_1$ has weight exactly $w(N_i(x,y))$. Hence we may apply \Cref{lem:weighted-KST} with $\rho=\lambda$, $q=O(\lambda^{1/2^{m-2}})$, and $t=2$, to obtain $$w(H_i)=w'(G_i)=O(\lambda^{1/2^{m-1}}).$$
Summing this over all $i$ gives us the desired bound on $w(H)$.
\end{proof}

We can use the same ideas as in the proof of \cref{thm:variousS} to prove two corollaries on probabilities of the form $\Pr(X\in S)$ (the first of which implies \cref{thm:relative}). For both of these corollaries, we consider nonzero $a_{1},\dots,a_{n}\in\RR^{d}$, and write
$X=a_{1}\xi_{1}+\dots+a_{n}\xi_{n}$, where $\xi_{1},\dots,\xi_{n}$
are i.i.d.\ Rademacher random variables. Recall the definitions of the sum hypergraph $H_{X,S}(I_1,\dots,I_k)$ and $H_S(V_1,\dots,V_m)$ from \cref{def:incidence-hypergraph}.
\begin{cor}
\label{cor:hypercomprho}
Suppose $S\subset \mathbb{R}^d$ is such that any $m$-partite $m$-uniform sum hypergraph of the form $H_{S}(V_1,\dots,V_m)$ has complexity at most $M$. Let $\rho=\max_{x\in\mathbb{R}^d}\Pr(X=x)$. Then we have
$$\Pr(X\in S)\le C_{M,m}\rho^{1/(m2^{m-1})},$$
for some $C_{M,m}$ depending only on $M$ and $m$.
\end{cor}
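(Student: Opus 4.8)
The plan is to deduce this from \cref{thm:hypercomprho} exactly as \cref{thm:variousS} was deduced from \cref{lem:weighted-KST}: split $X$ into $m$ summands that are each individually well anti-concentrated, and apply the hypergraph extremal bound to the weighted sum hypergraph these summands define. First I would dispose of the trivial range. If $\rho^{1/m}\ge 1/2$, then $\rho\ge 2^{-m}$ and the asserted inequality holds for free as soon as $C_{M,m}\ge 2^m$, since $\Pr(X\in S)\le 1$. So assume from now on that $\rho^{1/m}<1/2$.

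Next I would apply \cref{lem:part-concentration} with $\lambda_1=\dots=\lambda_m=\rho^{1/m}$. Since $\prod_{i=1}^m\lambda_i=\rho=\rho(a_1,\dots,a_n)$ and each $\lambda_i<1/2$, the lemma produces a partition $\{1,\dots,n\}=I_1\sqcup\dots\sqcup I_m$ with $\rho\bigl((a_i)_{i\in I_j}\bigr)\le 2\rho^{1/m}$ for every $j$. Writing $X_j=\sum_{i\in I_j}a_i\xi_i$, this says $\max_x\Pr(X_j=x)\le 2\rho^{1/m}$ for all $j$.

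Now form the weighted sum hypergraph $H=H_{X,S}(I_1,\dots,I_m)$ of \cref{def:incidence-hypergraph}, whose parts $V_1,\dots,V_m$ are the supports of $X_1,\dots,X_m$ (regarded as formally disjoint, even if some of these supports coincide as subsets of $\RR^d$), with vertex weights $w(x)=\Pr(X_j=x)$ on $V_j$. By construction $w(V_j)=1$ for each $j$, every vertex has weight at most $\lambda:=2\rho^{1/m}$, and—this is exactly where the hypothesis on $S$ enters—$H$ has complexity at most $M$; moreover $\Pr(X\in S)=\Pr(X_1+\dots+X_m\in S)=w(H)$. \cref{thm:hypercomprho} then gives
$$\Pr(X\in S)=w(H)\le C_{M,m}\lambda^{1/2^{m-1}}=C_{M,m}\bigl(2\rho^{1/m}\bigr)^{1/2^{m-1}}\le 2C_{M,m}\,\rho^{1/(m2^{m-1})},$$
and renaming $2C_{M,m}$ as $C_{M,m}$ finishes the argument.

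I do not expect any real obstacle here: all the difficulty has been front-loaded into \cref{thm:hypercomprho} (whose proof iterates \cref{lem:weighted-KST}) and \cref{lem:part-concentration}. The only points requiring a moment's care are checking that the trivial-range reduction puts us in the regime $\lambda<1$ where \cref{lem:part-concentration} is applicable, and the bookkeeping that $H$ is genuinely $m$-partite in the sense demanded by \cref{thm:hypercomprho} even when the supports of the $X_j$ overlap as point sets.
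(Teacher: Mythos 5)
Your proof is correct and is essentially identical to the paper's own argument: apply \cref{lem:part-concentration} to obtain an $m$-part $2\rho^{1/m}$-anti-concentrated partition, then feed the weighted sum hypergraph $H_{X,S}(I_1,\dots,I_m)$ (which has complexity at most $M$ by hypothesis) into \cref{thm:hypercomprho} with $\lambda=2\rho^{1/m}$. Your explicit handling of the trivial range $\rho^{1/m}\ge 1/2$ is a minor bit of care the paper leaves implicit, but it changes nothing substantive.
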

\begin{proof}
By \Cref{lem:part-concentration}, $(a_1,\dots,a_n)$ is $m$-part $2\rho^{1/m}$-anti-concentrated. Let $I_1\sqcup \dots\sqcup I_m$ be the corresponding partition, and apply \cref{thm:hypercomprho} to $H_{X,S}(I_1,\dots,I_m)$, with $\lambda=2\rho^{1/m}$.
\end{proof}
\begin{proof}[Proof of \Cref{thm:relative}]Recall that we showed assumption (a) implies assumption (b). So, we suppose assumption (b) is satisfied, meaning that $S$ includes no Minkowski sum $A_1+\dots+A_m$ with each $|A_i|=2$. Then, sum hypergraphs of the form $H_S(V_1,\dots,V_m)$ contain no $K^{(m)}_{2,\dots,2}$, meaning that they have complexity 1. Hence, by \cref{cor:hypercomprho} we conclude that $\Pr(X\in S)\le C_{1,m} \rho^{1/(m2^{m-1})}$.
\end{proof}

\begin{cor}
\label{thm:hypercompabs}
Suppose $S$ has the property that for any affine subspace $\Lambda\subseteq \mathbb{R}^d$ with dimension $\ell=\dim(\Lambda)\le d$, any sum hypergraph of the form $H_{X,S\cap \Lambda}(I_1,\dots,I_{\ell})$ has complexity at most $M$. Then
$$\Pr(X\in S)\le C_{M,k,d}n^{-d/2^{d}},$$
for some $C_{M,k,d}$ depending only on $M,d$.
\end{cor}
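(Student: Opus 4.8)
The plan is to mirror the proof of the first part of \cref{thm:variousS}, performing induction on $d$ and using \cref{thm:hypercomprho} in place of the weighted K\H ov\'ari--S\'os--Tur\'an theorem for graphs. The base case $d=1$ (and more generally the case $\ell=1$) is the Erd\H os--Littlewood--Offord theorem, which gives $\Pr(X\in S\cap\Lambda)=O(n^{-1/2})$ for any line $\Lambda$, matching $n^{-d/2^d}=n^{-1/2}$ when $d=1$. For the inductive step, assume $d\ge 2$ and that the statement holds in all dimensions $<d$. Split $\{1,\dots,n\}$ into $d$ blocks $I_1\sqcup\dots\sqcup I_d$ of size $\approx n/d$, and set $X_j=\sum_{i\in I_j}a_i\xi_i$ with $V_j$ its support. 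There are two cases depending on whether the coefficients within these blocks are "robustly spanning".

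First, if for every block $I_j$ no proper linear subspace of $\RR^d$ contains half of the vectors $(a_i)_{i\in I_j}$, then by \Cref{thm:Halasz} each $X_j$ satisfies $\max_x\Pr(X_j=x)\le C_d(n/d)^{-d/2}=:\lambda=O(n^{-d/2})$. Applying \cref{thm:hypercomprho} to $H_{X,S}(I_1,\dots,I_d)$ (which has complexity at most $M$ by hypothesis, taking $\Lambda=\RR^d$) with $m=d$ and this value of $\lambda$, we get $\Pr(X\in S)\le C_{M,d}\lambda^{1/2^{d-1}}=O(n^{-d/2^d})$, as desired. Otherwise, some block $I_j$ has at least half of its vectors $(a_i)_{i\in I_j}$ — so at least $\approx n/(2d)$ of all the $a_i$ — lying in a linear subspace $W$ of dimension $d-1$; call this index set $I'\subseteq I_j$, and write $X'=\sum_{i\in I'}a_i\xi_i$ and $X''=\sum_{i\notin I'}a_i\xi_i$, so $X'$ is supported in $W$. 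Then
$$\Pr(X\in S)=\Pr(X'\in S-X'')=\Pr\bigl(X'\in (S-X'')\cap W\bigr),$$
and we condition on the outcome of $X''$. For a fixed outcome of $X''$, the set $(S-X'')\cap W$ is $(S')\cap W$ for a translate $S'$ of $S$; since $W$ is a $(d-1)$-dimensional affine subspace and affine subspaces of $W$ are affine subspaces of $\RR^d$, the hypothesis of the corollary (applied to translates of $S$, which have the same intersection-hypergraph complexity) shows that $(S-X'')\cap W$, viewed inside $W\cong\RR^{d-1}$, satisfies the hypothesis of the corollary in dimension $d-1$. By the inductive hypothesis, $\Pr(X'\in (S-X'')\cap W)=O\bigl((n/(2d))^{-(d-1)/2^{d-1}}\bigr)=O(n^{-(d-1)/2^{d-1}})$. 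Since $(d-1)/2^{d-1}\ge d/2^d$, this is $O(n^{-d/2^d})$, completing the induction. (One should check the hypothesis is stated for translates of $S$; if not, note that the intersection-hypergraph complexity of a translate $S-b$ equals that of $S$, since $H_{S-b}(V_1,\dots,V_\ell)=H_S(V_1,\dots,V_{\ell-1},V_\ell+b)$, so this is harmless.)

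The main obstacle is bookkeeping rather than a genuine difficulty: one must be careful that the complexity hypothesis is inherited under (i) passing to translates of $S$ and (ii) restricting to the affine subspace $W$ and re-indexing inside $W\cong\RR^{d-1}$ — in particular that an affine subspace $\Lambda\subseteq W$ of dimension $\ell$ is also an affine subspace of $\RR^d$ of dimension $\ell$, so that the hypothesis in $\RR^d$ directly supplies what is needed in $W$. One also needs the trivial numerical inequality $(d-1)/2^{d-1}\ge d/2^d$, equivalent to $2(d-1)\ge d$, i.e. $d\ge 2$, which holds in the inductive step. The constant $C_{M,k,d}$ absorbs the constants $C_d$ from \Cref{thm:Halasz}, $C_{M,m}$ from \cref{thm:hypercomprho}, and the $d$-dependence from the partition sizes, multiplied together over the $d$ levels of recursion; the parameter $k$ plays no essential role here and is only carried along for consistency with the surrounding statements.
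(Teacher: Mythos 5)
Your proposal is correct and follows essentially the same route as the paper: induct on $d$, use Hal\'asz plus \cref{thm:hypercomprho} when the blocks are robustly spanning, and otherwise pass to a hyperplane $W$ containing many coefficients, condition, and apply the inductive hypothesis to $(S-X'')\cap W$. The only point to tighten is the base case $d=1$: the Erd\H os--Littlewood--Offord theorem alone bounds point probabilities, so you must also invoke the complexity hypothesis (the $1$-uniform hypergraph having at most $M$ edges means at most $M$ support points of $X$ lie in $S$) and take a union bound to get $\Pr(X\in S)\le M\cdot O(n^{-1/2})$, exactly as the paper does.
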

\begin{proof}
We induct on $d$. In the case $d=1$, the fact that the 1-uniform hypergraph $H_{X,S}(\{1,\dots,n\})$ has complexity at most $M$ implies that $|S\cap X|\le M$, so the desired result follows from the Erd{\H o}s--Littlewood--Offord Theorem. So, consider some $d\ge 2$, and assume the theorem statement holds in dimensions lower than $d$.

Let $C_d$ be as in \Cref{thm:Halasz}, and first suppose $(a_1,\dots,a_n)$ is $d$-part $\lambda$-anti-concentrated, for $\lambda=C_d |I_j|^{-d/2}$. Then, the desired result follows from \Cref{thm:hypercomprho} with $m=d$.

Otherwise, fixing an arbitrary equipartition $\{1,\dots,n\}=I_1\sqcup\dots \sqcup I_d$, there is some $j$ such that $\rho((a_i)_{i\in I_j})\ge C_d |I_j|^{-d/2}$. By \Cref{thm:Halasz}, there is a linear subspace $W\subset \mathbb{R}^d$ of dimension $d-1$ containing half of the $a_i$ in $I_1$ (say $a_1,\dots,a_q$ for some $q\ge \lfloor n/d\rfloor /2$). Let $X_1=a_1\xi_1+\dots+a_q\xi_q$ and $X_2=a_{q+1}\xi_{q+1}+\dots+a_n\xi_n$, so that
$$\Pr(X\in S)=\Pr(X_1\in S-X_2)=\Pr(X_1\in (S-X_2)\cap W).$$
But, if we condition on any outcome of $X_2$, then applying the induction hypothesis to the set $(S-X_2)\cap W \subseteq W$ (lying inside $W\cong \RR^{d-1}$) shows that
$$\Pr(X_1\in (S-X_2)\cap W)=O\left((\lfloor n/d \rfloor/2)^{-(d-1)/2^{d-1}}\right)=O\left(n^{-d/2^{d}}\right),$$
implying the desired result.
\end{proof}
\section{Sets of points in convex position}\label{sec:convex}
In this section we prove \cref{thm:convex}. We start with the following simple fact about sets of points in convex position.
\begin{lem}\label{lem:Zonotope-bound-weak}There is no finite set of points $Z=A_1+\dots+A_{d+1}\subseteq \RR^d$, with $|A_i|=2$, that is in convex position.\end{lem}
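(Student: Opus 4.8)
The plan is to argue by contradiction using the extremal structure of a zonotope. Suppose $Z = A_1 + \dots + A_{d+1}$ with each $|A_i| = 2$, say $A_i = \{0, v_i\}$ after translating (we may translate $Z$ freely since convex position is translation-invariant; absorbing the offsets of each $A_i$ into a single global translation). Then $Z = \{\sum_{i \in T} v_i : T \subseteq \{1,\dots,d+1\}\}$, the vertex set of the zonotope $\conv(Z) = \sum_{i=1}^{d+1}[0,v_i]$, and $Z$ has convex position exactly when all $2^{d+1}$ of these points are distinct and each is an extreme point of $\conv(Z)$. The key structural fact is that a point $z_T = \sum_{i\in T}v_i$ is an extreme point of the zonotope only if there is a linear functional $\ell$ with $\ell(v_i) > 0$ for $i \in T$ and $\ell(v_i) < 0$ for $i \notin T$ (strict, since we need $z_T$ to be the \emph{unique} maximiser, else it lies on a positive-dimensional face spanned by some $[0,v_i]$ and is a non-trivial convex combination of other vertices). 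So convex position of $Z$ forces: for every subset $T \subseteq \{1,\dots,d+1\}$ there is $\ell \in (\RR^d)^*$ with $\operatorname{sgn}(\ell(v_i)) = +1$ iff $i \in T$.

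The heart of the argument is then a counting/dimension contradiction. The map $\ell \mapsto (\operatorname{sgn}(\ell(v_1)), \dots, \operatorname{sgn}(\ell(v_{d+1}))) \in \{-1,+1\}^{d+1}$ is defined on the complement of the union of the $d+1$ hyperplanes $\{\ell : \ell(v_i) = 0\}$ in $(\RR^d)^* \cong \RR^d$, and it is constant on each connected component (cell) of that complement. Convex position requires this map to be surjective onto $\{-1,+1\}^{d+1}$, i.e. the hyperplane arrangement must have at least $2^{d+1}$ full-dimensional cells. But an arrangement of $d+1$ hyperplanes through the origin in $\RR^d$ has at most $2\sum_{j=0}^{d-1}\binom{d}{j} = 2(2^d - 1) = 2^{d+1} - 2$ regions (this is the standard bound for central arrangements; one can also see directly that since $v_1,\dots,v_{d+1}$ are $d+1$ vectors in $\RR^d$ they satisfy a linear dependence $\sum c_i v_i = 0$ with not all $c_i$ zero, and the two sign patterns $T = \{i : c_i > 0\}$ and its complement can never be realised). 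Either way we reach $2^{d+1} \le 2^{d+1} - 2$, a contradiction.

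The step I expect to be the main obstacle is making the ``extreme point $\Rightarrow$ strict separating functional'' claim fully rigorous, i.e. ruling out the degenerate situation where some $v_i = 0$, or where two vertices $z_T, z_{T'}$ coincide, or where a vertex lies on a face of the zonotope without being exposed by a \emph{strict} functional. The clean way to handle this is: first, if any $v_i = 0$ then $|Z| \le 2^d$ and in fact $Z$ is a translate-union forcing two points to be equal, violating convex position immediately (or just note $Z$ contains the segment $[z_T, z_{T\cup\{i\}}]$ when... — more carefully, if $v_i = 0$ then $A_i$ is a singleton, contradicting $|A_i| = 2$, so actually each $v_i \ne 0$ automatically). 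Second, every vertex of a polytope \emph{is} exposed by some functional, but I need it exposed \emph{uniquely}; here I use that for a zonotope $\sum [0,v_i]$ the face exposed by $\ell$ in direction of its maximum is $\sum_{i : \ell(v_i) > 0} \{v_i\} + \sum_{i : \ell(v_i) = 0}[0,v_i]$, so if $z_T$ is an extreme point and $\ell$ exposes the face containing it, no $i$ can have $\ell(v_i) = 0$ and $T = \{i : \ell(v_i) > 0\}$ — giving exactly the strict functional required. Once this dictionary between extreme points of the zonotope and cells of the central hyperplane arrangement $\{\ell(v_i) = 0\}$ is set up, the cardinality bound closes the argument.
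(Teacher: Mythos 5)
The second half of your argument is correct and genuinely different from the paper's: where the paper quotes the bound $2\sum_{i=0}^{d-1}\binom{m-1}{i}$ on the number of vertices of a $d$-dimensional zonotope with $m$ generators, you observe that a nontrivial dependence $\sum c_iv_i=0$ among the $d+1$ vectors $v_i\in\RR^d$ makes the two sign patterns $\{i:c_i>0\}$ and its complement unrealizable, so at most $2^{d+1}-2$ of the $2^{d+1}$ patterns can occur; this is a clean, self-contained substitute for the citation, and as a bonus it lets you skip the paper's preliminary reduction to the full-dimensional case, since vertices of $\conv(Z)$ are exposed by functionals on the ambient $\RR^d$ regardless of the dimension of the affine span.

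The gap is at the start: you assert that convex position of $Z$ is equivalent to ``all $2^{d+1}$ points $z_T=\sum_{i\in T}v_i$ are distinct and each is an extreme point of $\conv(Z)$''. The extreme-point half is just the definition, but the distinctness half is a substantive claim, and your argument collapses without it. If $z_T=z_{T'}$ with $T\ne T'$, the functional exposing that vertex realizes only one sign pattern (some $T''$ with $z_{T''}=z_T$), so you cannot conclude that \emph{every} $T\subseteq\{1,\dots,d+1\}$ is realized; the bound ``at most $2^{d+1}-2$ realizable patterns'' is then perfectly consistent with a set of at most $2^{d+1}-2$ distinct subset sums all being vertices, and no contradiction follows. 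Proving that a coincidence of subset sums already violates convex position is exactly the first half of the paper's proof: taking the first index $i$ with $|A_1+\dots+A_{i+1}|<2^{i+1}$, one gets $z+x=w+y$ with $z,w\in A_1+\dots+A_i$ and $A_{i+1}=\{x,y\}$, hence the three-term arithmetic progression $w+x,\ z+x,\ z+y$, which after adding any element of $A_{i+2}+\dots+A_{d+1}$ gives three points of $Z$ whose middle point is the average of the other two. You need to supply an argument of this kind (coincidence $\Rightarrow$ some point of $Z$ is a convex combination of others) before the sign-pattern count can be invoked; with that added, your proof is complete.
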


\begin{proof}Consider such a set $Z$, and for the purpose of contradiction assume its elements are in convex position. We may assume that the affine span of $Z$ is full-dimensional (otherwise we can induct on the dimension, observing that the $d=0$ case is trivial). First we show that $|Z|=2^{d+1}$. Indeed, if not, let $i$ be the first index such that $|A_1+\dots+A_i|=2^i$ and $|A_1+\dots+A_{i+1}|<2^{i+1}$. Then writing $A_{i+1}=\{x,y\}$, there exist $z,w\in A_1+\dots+A_i$ such that $z+x=y+w$. But then $v_1=w+x$, $v_2=z+x=w+y$ and $v_3=z+y$ are an arithmetic progression (with common difference $z-w=y-x$), and are hence not in convex position. But for any $v\in A_{i+2}+\dots+A_{d+1}$, we have $v_1+v,v_2+v,v_3+v\in Z$, contradicting that the elements of $Z$ are in convex position. 

Now, the convex hull $\conv(Z)$ is a Minkowski sum of intervals $\conv(A_1)+\dots+\conv(A_{d+1})$, also called a \emph{zonotope}. But it is known (see for example \cite{Fuk}) that any zonotope in $d$ dimensions with $m$ generators has at most $2\sum_{i=0}^{d-1}\binom{m-1}{i}$ vertices, and when $m=d+1$ this tells us that $|Z|\le 2^{d+1}-1$, a contradiction.\end{proof}

To obtain the bounds in \Cref{thm:convex} we will actually need the following slight variant of \cref{lem:Zonotope-bound-weak}.

\begin{prop}
\label{prop:convex-zonotope}
If $S\subset \mathbb{R}^d$ is a set of points in convex position, then there exists a cover $S=S_1\cup \dots\cup S_{2d}$ such that for any subsets $A_1,\dots,A_d\subset \mathbb{R}^d$ with $|A_i|=2$, and any $S_j$, we have
$$A_1+\dots+A_d\not\subseteq S_j.$$
\end{prop}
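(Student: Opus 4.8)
The plan is to strengthen Lemma \ref{lem:Zonotope-bound-weak} by tracking \emph{where} the obstruction to convex position came from in its proof. Recall that the proof of Lemma \ref{lem:Zonotope-bound-weak} produced, inside any $Z = A_1 + \dots + A_{d+1}$ with each $|A_i| = 2$, a three-term arithmetic progression $v_1, v_2, v_3$ whose middle point $v_2$ is a convex combination of $v_1$ and $v_3$. The middle point of such a progression is distinguished: it is the \emph{average} of the two others. So if $S$ is in convex position, the key is that no point of $S$ can be the midpoint of two other points of $S$, but a \emph{translate} $A_1 + \dots + A_d$ sitting inside $S$ only forces such a midpoint configuration once we adjoin a $(d+1)$st set of size $2$. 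The idea is therefore to split $S$ according to a ``coordinate'' that records enough information to reconstruct the missing $(d+1)$st direction from the midpoint relation.

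Concretely, here is the approach. Fix a generic linear functional $\ell : \RR^d \to \RR$, injective on $S$ (possible since $S$ is finite), and use it to \emph{order} the points of $S$. Suppose for contradiction that some $S_j$ (to be defined) contains a translate $A_1 + \dots + A_d$ with each $|A_i| = 2$. Running the argument of Lemma \ref{lem:Zonotope-bound-weak} \emph{one step further}: either $|A_1 + \dots + A_d| < 2^d$, in which case as in that proof we already find a 3-AP $v_1, v_2, v_3 \subseteq A_1 + \dots + A_d \subseteq S_j \subseteq S$, contradicting convex position directly (this case needs no splitting); or $|A_1 + \dots + A_d| = 2^d$, the full combinatorial cube, and this is the case the cover must rule out. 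In the latter case, pick the generator of $A_d = \{x, y\}$; then for each $z \in A_1 + \dots + A_{d-1}$ the pair $z + x, z + y$ lies in $S_j$, and these $2^{d-1}$ pairs all have the same difference vector $y - x \ne 0$. The strategy is to define the cover so that within a single piece $S_j$, no two disjoint pairs of points can share a common difference vector with a point strictly between them in the $\ell$-order — in other words, each piece is ``midpoint-free'' in a strong sense — so that already two of these $2^{d-1}$ translated pairs, say $\{z+x, z+y\}$ and $\{z'+x, z'+y\}$ with $\ell(z) < \ell(z')$, produce the forbidden configuration: $z+x, z+y, z'+x$ (or a relabelling) form three points of $S_j$ with the middle one an affine combination of the outer two.

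To make this into $2d$ pieces, I would use a standard ``coloring to kill midpoints'' trick: since $S$ is in convex position, for each point $p \in S$ the set of points of $S$ strictly below $p$ in the $\ell$-order, together with those strictly above, cannot both ``surround'' $p$ in a way compatible with a repeated difference; more usefully, one can $2$-color $S$ by parity of position in the $\ell$-order to ensure any 3-AP uses two colors, and separately refine by which of the $d$ coordinate hyperplanes the difference vector $y-x$ is ``aligned'' with — but since $A_d$ is arbitrary this last refinement must instead be replaced by using the full dimension count. The cleanest route, and the one I would write up: induct on $d$ exactly as in Lemma \ref{lem:Zonotope-bound-weak}, peeling off a direction at a time and using the $2$ extra pieces per dimension to absorb the parity/midpoint coloring at that stage, so that after $d$ steps one has spent $2d$ pieces and any surviving translate $A_1 + \dots + A_d$ would force a monochromatic 3-AP, contradicting both convex position and the coloring. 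The main obstacle is bookkeeping: making sure the cover is chosen \emph{before} and independently of the sets $A_i$ (the quantifier order in the statement demands a single cover working for all $A_1,\dots,A_d$ simultaneously), which is why the coloring must be defined intrinsically from $S$ and $\ell$, not from any particular zonotope; verifying that $d$ rounds of a $2$-coloring suffice is the crux, and I expect it to follow from the zonotope vertex-count bound $2\sum_{i=0}^{d-1}\binom{m-1}{i}$ used in Lemma \ref{lem:Zonotope-bound-weak} applied within each piece.
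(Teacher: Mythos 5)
There is a genuine gap here, and in fact two of the three load-bearing steps do not hold as stated. First, the contradiction you aim for is not a contradiction: two pairs $\{z+x,z+y\}$ and $\{z'+x,z'+y\}$ with the same difference vector $y-x$ form a (generally non-degenerate) parallelogram, and the four vertices of a parallelogram \emph{are} in convex position; the three points $z+x,\ z+y,\ z'+x$ are in general not collinear, so none of them is an affine (let alone convex) combination of the other two. Convex position only forbids $3$-term arithmetic progressions, not repeated differences, so after reducing to the full-cube case $|A_1+\dots+A_d|=2^d$ the burden falls entirely on the cover itself to exclude two equal-difference pairs in one piece --- a Sidon-type property that you never construct. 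The proposed construction is left as a hope (``parity of position in the $\ell$-order'', ``2 extra pieces per dimension'', ``I expect it to follow from the zonotope vertex-count bound''), and the parity colouring does not address repeated difference vectors at all. Second, the whole scheme presupposes $S$ finite: a linear functional injective on $S$, and ``parity of position'' in the induced order, do not exist when $S$ is infinite, whereas the proposition must apply (and in \cref{thm:convex} is applied) to infinite sets in convex position such as the boundary of a strictly convex body.

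For comparison, the paper's proof defines the cover geometrically rather than by an ordering: for each $x\in S$ pick an outward normal $v_x$ of a supporting hyperplane, so $\langle v_x,y-x\rangle<0$ for all $y\in S\setminus\{x\}$, and let $S_i^{\pm}=\{x\in S:\pm\langle v_x,e_i\rangle>0\}$; these $2d$ sets cover $S$ and make sense for arbitrary (infinite) $S$. If some $S_i^{+}$ contained $Z=\{a_1,b_1\}+\dots+\{a_d,b_d\}$, one projects $Z$ along $e_i$ to get a sum of $d$ pairs in $\RR^{d-1}$, which by \cref{lem:Zonotope-bound-weak} is not in convex position; unwinding this produces a point $x\in Z$ lying directly below (in the $e_i$ direction) a point of $\conv(Z)$, which forces $\langle v_x,y-x\rangle>0$ for some $y\in Z\subseteq S$, contradicting the supporting-hyperplane inequality. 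If you want to salvage your approach, you would need to replace the order/parity colouring by something with this flavour --- a piece-defining rule, intrinsic to $S$ and valid for infinite $S$, that converts a full combinatorial cube into a violation of convex position (or of the defining rule) --- and at present that step is missing.
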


\begin{proof}
Let $S\subseteq \RR^d$ be a set of points in convex position. We need to cover $S$ with $2d$ sets, none of which contains a Minkowski sum $A_1+\dots+A_d$ with each $|A_i|=2$ (call such a Minkowski sum a \emph{bad configuration}).

For each point $x\in S$ let $v_{x}$ be the outward normal vector to a supporting hyperplane $H_x$ for $S$. Then $\langle v_x, y-x\rangle <0$ for every $y\in S\backslash\{ x\} $. Let
$e_{1},\dots,e_{n}$ be the standard basis vectors of $\RR^{d}$,
and let $S_{i}^{+}$ (respectively $S_{i}^{-}$) be the set of $x\in S$
with $\langle v_{x},e_{i}\rangle >0$ (respectively, $\langle v_{x},e_{i}\rangle <0$).
Clearly these sets cover $S$; we will prove that none of them contains a bad configuration.

Without loss of generality, we consider $S_{1}^{+}$. Imagining that
$e_{1}$ points ``downwards'', one should think of $S_{1}^{+}$ as being ``concave up''.
Suppose $Z=\{a_1,b_1\}+\dots+\{a_d,b_d\}$ is a bad configuration. We may assume that
$Z$ does not lie in a $(d-1)$-dimensional affine subspace
(in which case by \cref{lem:Zonotope-bound-weak} it would not be in convex position and
would therefore not be a subset of $S$). It suffices to prove that
there is some point $x\in Z$ which is situated vertically above some
other point $y'=x+\lambda e_1\in\conv(Z)$ (here $\lambda>0$). Indeed, this would imply that $\langle v_{x},y'-x\rangle=\lambda \langle v_x,e_1\rangle > 0$, and hence there is some $y\in Z\subseteq S_1^+$ such that $\langle v_x, y-x\rangle > 0$, contradicting the defining property of $v_x$.

Let $\pi(Z)$ be the projection of $Z$ onto the hyperplane
perpendicular to $e_{1}$, so by \cref{lem:Zonotope-bound-weak}, either some $b_{i}-a_{i}$
is parallel to $e_{1}$ or else there is some $\pi(z)\in\pi(Z)$
which is a convex combination of other points in $\pi(Z)$.
In the former case, the desired fact is obvious, and in the latter
case we may take $x$ to be $z$, if $z$ is above another point of $\conv(Z)$, or the vertex diametrically opposite $z$ in $Z$, otherwise.
\end{proof}
We finally prove \Cref{thm:convex}.
\begin{proof}[Proof of \Cref{thm:convex}]
If $S\subseteq \RR^d$ is a set of points in convex position, then \cref{prop:convex-zonotope} tells us that we can write $S=S_1\cup \dots S_{2d}$ such that any sum hypergraph of the form $H_{S_i}(V_1,\dots,V_d)$ is $K^{(d)}_{2,\dots,2}$-free and therefore has complexity 1. The first part of \Cref{thm:convex} then follows from \Cref{thm:hypercompabs} and the second part follows from \Cref{cor:hypercomprho}.
\end{proof}

\section{Hypergraph complexity for definable sets}
In this section we prove \cref{prop:o-minimal-complexity}, bounding the complexity of sum hypergraphs associated with a definable set. This implies \Cref{thm:pointvsS} by \Cref{cor:hypercomprho}. As mentioned in the introduction, sets definable with respect to an o-minimal structure have a well-behaved notion of ``dimension''. There are many equivalent ways of defining dimension; we choose one for concreteness.
\begin{defn}[{\cite[Definition 3.14]{Cos00}}]\label{def:dimension}
Say a formula $\phi$ with $d+e$ unbound variables $x_1,\ldots,x_d,y_1,\ldots,y_e$ defines a function $f_\phi:\mathbb{R}^e\to \mathbb{R}^d$ if for every choice of $(x_1,\ldots,x_e)\in \mathbb{R}^e$, there is exactly one choice of $(y_1,\ldots,y_d)\in \mathbb{R}^d$ such that $\phi$ is true, in which case we set $f_\phi(x_1,\ldots,x_e)=(y_1,\ldots,y_d)$. Then for a definable set $S\subset \mathbb{R}^d$, $\dim S$ is the largest $e\in \mathbb{N}$ such that there is an injective function of the form $f_\phi:\mathbb{R}^e\to \mathbb{R}^d$ with range contained in $S$ (in which case we write $f_\phi:\mathbb{R}^e\hookrightarrow S$).
\end{defn}
\begin{rem}
\label{rem:contfphi}
Applying \cite[Proposition 3.17(4)]{Cos00},  \cite[Corollary 3.16]{Cos00}, and \cite[Proposition 2.5]{Cos00} shows that we may take $f_\phi$ to be continuous in the above definition, although we will not need this.
\end{rem}
 We refer the reader to \cite[Section 3.3]{Cos00} for additional basic properties of dimension (such as dimension being well-defined, $\dim \mathbb{R}^d=d$, dimension $0$ sets are exactly the finite subsets of $\mathbb{R}^d$, $\dim(X\cup Y)=\max(\dim X,\dim Y)$, etc.)

The following new notion will be essential to the proof. 
\begin{defn}
We say that a definable set $T$ not containing a line segment is \emph{self-irreducible} if $\dim((T-x)\cap (T-y))<\dim T$ for all distinct $x,y\in \mathbb{R}^d$. We denote by $\mathcal{I}$ the collection of all self-irreducible definable sets.
\end{defn}
Now, \cref{prop:o-minimal-complexity} is an immediate consequence of the following proposition.
\begin{prop}
\label{prop:twofacts}
For any finitely generated o-minimal structure $\RR_{\mathcal F}$, there is a function $f:\NN\to \NN$ such that the following hold.
\begin{compactenum}
    \item Every definable set $S$ with complexity $r$, not containing a line segment, can be written as a union of at most $f(r)$ self-irreducible sets $T\in \mathcal{I}$, each with complexity at most $f(r)$.
    \item For each self-irreducible $T\in \mathcal{I}$ with complexity $r$, and any distinct elements $x,y\in \mathbb{R}^d$, the intersection $(T-x)\cap (T-y)$ is definable, has lower dimension than $T$, and has complexity at most $f(r)$.
\end{compactenum}
\end{prop}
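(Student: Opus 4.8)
The plan is to prove the two parts separately; part~(2) is essentially immediate and part~(1) carries all the work. Throughout I would lean on the uniform-finiteness machinery attached to a finitely generated o-minimal structure (the ``definable families'' formalism referred to near \cref{rem:complexity}): there is a single definable family whose fibres are exactly the definable subsets of $\RR^d$ of complexity at most $r$, and every operation I shall use — translation by an arbitrary vector, finite intersection, topological closure, restriction to the locus of a definable family where the fibre has dimension exactly $k$, restriction to the $C^1$-regular locus of a given dimension, and passing to definably connected components — sends bounded-complexity families to bounded-complexity families. Hence every numerical invariant that appears (dimension of a piece, number of pieces, cardinality of a finite definable set) is bounded by a function of $r$ and $d$ alone, and it suffices to exhibit, for each individual $S$, \emph{some} finite decomposition with the stated properties; the function $f$ then exists automatically.

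Part~(2) is short. For fixed $T$ of complexity $r$ and vectors $x=(a_1,\dots,a_d)$, $y=(b_1,\dots,b_d)$, the set $(T-x)\cap(T-y)$ is obtained from a defining formula for $T$ by substituting $x_i\mapsto x_i+a_i$, substituting $x_i\mapsto x_i+b_i$, and conjoining, so it is definable of complexity $O(r)$ uniformly in $x,y$; and ``$\dim\big((T-x)\cap(T-y)\big)<\dim T$ for all distinct $x,y$'' is exactly the definition of $T\in\mathcal I$.

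For part~(1) I would induct on $k=\dim S$, the case $k=0$ being trivial (a finite set of boundedly many points, written as the union of its singletons). For the inductive step, given $S$ of dimension $k$ containing no line segment, consider the ``bad set''
$$E(S)=\{v\neq 0:\dim\big(S\cap(S+v)\big)=k\},$$
which is definable of bounded complexity. The first key point is that $E(S)$ is \emph{finite}, and this is precisely where the no-line-segment hypothesis is used: if $E(S)$ had positive dimension it would contain a $C^1$ arc $s\mapsto v(s)$, and writing the overlap of $S$ with $S+v(s)$ near a point via a local $C^1$ parametrization $\sigma$ as a relation $\sigma(t)-\sigma(\phi_s(t))=v(s)$, one differentiates in $t$ and lets $s$ vary to see that the tangent $k$-plane of $S$ is forced to be constant along a positive-dimensional family of points, forcing $S$ to contain a piece of an affine $k$-plane, hence a line segment. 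Granting finiteness (with $|E(S)|$ bounded by uniformity), set
$$S^{\circ}=S\setminus\bigcup_{v\in E(S)}\big(S\cap(S+v)\big),\qquad\text{so that}\qquad S=S^{\circ}\cup\bigcup_{v\in E(S)}\big(S\cap(S+v)\big).$$
Directly from this construction one checks that $S^{\circ}\cap(S^{\circ}+w)$ is empty for $w\in E(S)$ and has dimension $<k$ for $w\notin E(S)\cup\{0\}$; so if $\dim S^{\circ}=k$ then $S^{\circ}\in\mathcal I$, and if $\dim S^{\circ}<k$ I recurse on $S^{\circ}$ via the dimension induction. Each $S\cap(S+v)$ with $v\in E(S)$ is a definable set of dimension at most $k$: those of dimension $<k$ are handled by the dimension induction, and the remaining task — the heart of the matter — is to show that the recursion on those pieces $S\cap(S+v)$ which still have dimension $k$ terminates. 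These are proper subsets of $S$ of the form ``$S$ intersected with translates of itself'', and one argues that an appropriate invariant (morally, the extent of the bad set in the direction $v$, with $v$ chosen extremally) strictly decreases — again an argument resting on the absence of line segments — so that the uniform-finiteness machinery bounds the depth and width of the recursion tree by a function of $r,d$.

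I expect the two obstacles to be exactly these ``no-line-segment is used here'' steps: the clean proof that $E(S)$ is finite over an arbitrary finitely generated o-minimal structure (via the $C^1$-regular locus and tangent-plane rigidity), and the proof that the same-dimension recursion on $S\cap(S+v)$ terminates. This is where the promised analogue of ``irreducible components plus a restricted B\'ezout theorem'' genuinely has to be constructed; the complexity bookkeeping, by contrast, should be routine given the definable-families formalism.
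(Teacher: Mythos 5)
Your skeleton matches the paper's (part~(2) is indeed immediate; part~(1) by induction on $\dim S$ with the finiteness of the bad set $\Bad(S)$ as the key lemma, and complexity bookkeeping via definable families), but the two places you yourself flag as ``where the work is'' are genuinely incomplete, and the first contains a false step. For the finiteness of $E(S)=\Bad(S)$: differentiating the relation $\sigma(t)-\sigma(\phi_s(t))=v(s)$ in $t$ only gives that the tangent plane of $S$ at a point equals the tangent plane at that point translated by $v(s)$; varying $s$ produces such \emph{pairs} at varying base points, not constancy of the tangent plane along a family inside $S$. Moreover, even granting constancy along a positive-dimensional family, that does not force a flat piece: the surface $z=(y-x^2)^2$ in $\RR^3$ has constant tangent plane along the parabola $\{z=0,\ y=x^2\}$ yet contains no line segment. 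You would also need a definable selection $s\mapsto (x(s),r(s))$ of base points and coincidence radii with $r(s)$ bounded below, which you do not address. The paper's proof of \cref{lem:BadBounded} deliberately avoids manufacturing an actual segment: each bad $t$ yields a local isometry of $S$ near some $x_t$ and $x_t+t$ at radius $r_t$; if uncountably many $t$ had $r_t\ge\varepsilon$, a pigeonhole on nearby pairs $(x_{t_1},t_1),(x_{t_2},t_2)$ produces a short period $t_1-t_2$ and hence arithmetic progressions of length about $\varepsilon/\eta$ inside $S$ on a line, and letting $\eta\to 0$ contradicts the uniform bound on $|S\cap\ell|$ (\cref{cor:unifFinite}) --- that is where ``no line segment'' plus o-minimality actually enter.

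The second gap is the decomposition itself. Your recursion on the full-dimensional pieces $S\cap(S+v)$, $v\in E(S)$, is left open (``an appropriate invariant, morally\dots''); no invariant is exhibited, and it is not clear that $\Bad(S\cap(S+v))$ is smaller than $\Bad(S)$ in any sense that bounds the depth and width of the tree by a function of $r,d$ alone. The paper avoids same-dimension recursion entirely: having enumerated $\Bad(S)=\{v_1,\dots,v_s\}$ and used uniform finiteness (\cref{fact:unifFinite}, valid because the absence of line segments makes fibres along each $v_j$ finite) to bound the fibre size by $C_r$, it partitions $S$ into the sets $S_{i_1,\dots,i_s}$ of points having exactly $i_j$ points of $S$ ahead of them in direction $v_j$. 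Each such piece is disjoint from its own translate by every $v_j$ (the count strictly drops along $v_j$), and since $\Bad(T)\subseteq\Bad(S)$ whenever $T\subseteq S$ has full dimension, every full-dimensional piece has empty bad set and is already self-irreducible; only the lower-dimensional pieces are fed back into the dimension induction. So the two obstacles you name are exactly the content of the paper's argument, and your sketch does not yet supply either.
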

Actually, the second fact in \cref{prop:twofacts} is trivial by the definition of $\mathcal{I}$. Indeed, we always have $\dim((T-x)\cap (T-y))<\dim T$, and for the complexity bound we note that if $\phi_T(z)$ is a formula defining $T$, then $\phi_T(x+z)\wedge \phi_T(y+z)$ is a formula defining $(T-x)\cap (T-y)$.

So, we focus on proving the first of the two facts in \cref{prop:twofacts}. We will need one technical lemma about o-minimal sets not containing a line segment, whose proof we defer until later in this section.
\begin{defn}
Given a definable set $S\subset \mathbb{R}^d$, let $\Bad(S)=\{t\in\mathbb{R}^d\setminus \{\vec0\}: \dim(S\cap (S-t))=\dim S\}$.
\end{defn}
\begin{lem}
\label{lem:BadBounded}
For any finitely generated o-minimal structure $\RR_{\mathcal F}$, there is a function $g:\NN\to \NN$ such that the following holds. Let $S\subset \mathbb{R}^d$ be a definable set with complexity $r$ not containing a line segment. Then $\Bad(S)$ is a finite set, and $|\Bad(S)|\le g(r)$.
\end{lem}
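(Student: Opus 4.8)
The plan is to show that $\Bad(S)$ is definable, that it has dimension $0$ (hence is finite, using the o-minimal fact that $0$-dimensional definable sets are finite), and that its cardinality is bounded in terms of the complexity $r$ via a uniform-finiteness argument. First I would observe that $\Bad(S)$ is definable: if $\phi_S(z)$ is a formula of complexity $r$ defining $S$, then $S\cap(S-t)$ is defined by $\phi_S(z)\wedge\phi_S(z+t)$, and the condition ``$\dim(S\cap(S-t))=\dim S$'' can be expressed by a first-order formula in the parameter $t$ whose complexity is bounded in terms of $r,d$ (one can quantify over the witnessing injective map $f_\psi:\mathbb{R}^{\dim S}\hookrightarrow S\cap(S-t)$ from \cref{def:dimension}; more cleanly, one uses the standard fact from o-minimality that the set of parameters for which a uniformly definable family has fibre dimension $\ge k$ is itself definable of bounded complexity). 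Call this defining formula for $\Bad(S)$ of complexity $r'=r'(r,d)$.

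The core of the argument is to show $\dim\Bad(S)=0$, i.e.\ $\Bad(S)$ contains no continuous definable curve. Suppose for contradiction that $\Bad(S)$ has positive dimension; then (by \cref{rem:contfphi}) there is a continuous injective definable map $\gamma:(0,1)\hookrightarrow \Bad(S)$, and after a further restriction we may assume $\gamma$ is a $C^1$ embedding with nonvanishing derivative at some interior point, so that the image is a genuine arc with a well-defined tangent direction. The idea is then to use that, for each $t$ in this arc, $\dim(S\cap(S-t))=\dim S=:k$, meaning $S$ and its translate $S-t$ share a full-dimensional piece. I would set up the ``doubled'' definable set $E=\{(x,t): x\in S,\ x+t\in S,\ t\in \gamma((0,1))\}\subseteq \mathbb{R}^d\times\mathbb{R}^d$ and analyze its dimension: on the one hand, the fibre over each fixed $t$ has dimension $k$, and the base arc has dimension $1$, so $\dim E=k+1$ by the o-minimal fibre-dimension formula; on the other hand, projecting $E$ to the first coordinate, one shows the fibres are $1$-dimensional (for generic $x\in S$ in the relevant component, the set of $t$ in the arc with $x+t\in S$ is infinite, hence $1$-dimensional), forcing the image under the first projection — a subset of $S$ — to have dimension $\ge k+1-1=k$ but also, more importantly, producing through a generic such $x$ an honest one-parameter family of points $x+\gamma(s)\in S$. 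Taking the curve $s\mapsto x+\gamma(s)$, which lies in $S$ and (by the cell decomposition / $C^1$-structure) can be arranged to have a well-defined tangent line, one can then iterate: by o-minimal cell decomposition and the fact that $S$ does not contain a line segment, a definable curve in $S$ cannot be a line segment, but one can differentiate the relation $x+\gamma(s)\in S$ repeatedly to derive that the curve is ``affine to infinite order'' at a generic point, which in an o-minimal structure (where curves are piecewise real-analytic-like, with finitely many points of non-analyticity) forces an actual line segment inside $S$ — the desired contradiction.

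With $\dim\Bad(S)=0$ established, $\Bad(S)$ is finite because in any o-minimal structure a definable set of dimension $0$ is finite. The final step is the uniform bound $|\Bad(S)|\le g(r)$: since $\Bad(S)$ is definable by a formula of complexity $r'=r'(r,d)$, and complexity-$r'$ formulas in $d$ parameters form a ``definable family'' in the usual o-minimal sense (see \cref{rem:complexity}), the uniform finiteness theorem for o-minimal structures gives a single bound $N=N(r',d)=N(r,d)$ on the cardinality of every finite member of that family; set $g(r):=N(r,d)$. (Concretely: the collection of all finite $S_\phi$ for $\phi$ ranging over complexity-$\le r'$ formulas with $d$ free variables is cut out by a single formula with the constants as extra parameters, and o-minimality forbids the fibre cardinality from being unbounded as the parameters vary, since otherwise one could define an infinite discrete subset of a suitable parameter line.)

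The main obstacle is the middle step — converting ``$\Bad(S)$ has positive dimension'' into ``$S$ contains a line segment.'' The subtlety is that $S\cap(S-t)$ being full-dimensional for a one-parameter family of $t$ does not immediately hand us a translate of a curve inside $S$ along that curve's own tangent direction; one must carefully choose the component of $S$ and the base point $x$ so that the infinitesimal translations actually move $x$ along a fixed curve contained in $S$, and then extract from the o-minimal structure theory (cell decomposition, piecewise $C^\infty$ / analyticity of one-variable definable functions, monotonicity theorem) the conclusion that a curve agreeing with a translate of itself under a continuum of translations must be straight. Handling the bookkeeping so that all auxiliary formulas (for $E$, for the generic-fibre conditions, for the iterated derivatives) stay of complexity bounded in terms of $r$ and $d$ is also delicate but routine given the standard o-minimal machinery.
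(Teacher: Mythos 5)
Your first and last steps are fine and essentially match the paper: $\Bad(S)$ is definable of bounded complexity (the paper does this via the set $A=\{(x,t):x\in S\cap(S-t)\}$ and \cref{fact:fiberdimconst}), and once one knows $\Bad(S)$ is not uncountable, \cref{cor:boundorinf} (uniform finiteness over the definable family) gives the bound $g(r)$. The problem is the middle step, which is where all the content lies, and as written it has a genuine gap. From a curve $\gamma\subseteq\Bad(S)$ your set $E$ and the fibre-dimension count only yield a point $x$ and an interval $I$ with $x+\gamma(s)\in S$ for $s\in I$, i.e.\ that $S$ contains a translated arc of $\gamma$. That is not close to a contradiction: $S$ may contain arbitrary definable arcs, it just may not contain a segment. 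At this point you have discarded the actual hypothesis that each translation by $\gamma(s)$ matches a \emph{full-dimensional} piece of $S$ with $S$, and the proposed repair --- ``differentiate the relation $x+\gamma(s)\in S$ repeatedly to conclude the curve is affine to infinite order'' --- has no mechanism behind it: membership of a curve in a set imposes no constraints whatsoever on the curve's higher derivatives (any sub-arc of a strictly convex definable curve lies in that curve and is nowhere affine). So the passage from ``$\Bad(S)$ is infinite'' to ``$S$ contains a line segment'' is not established.

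What is actually needed, and what the paper does, is a local rigidity statement: if $\dim(S\cap(S-t))=\dim S$, then (by \cref{cor:localhomeo}, i.e.\ the fact that a definable subset of a definable set of the same dimension agrees with it on some small ball) there exist $x_t$ and $r_t>0$ with $S\cap B(x_t,r_t)=(S-t)\cap B(x_t,r_t)$, so $S$ is \emph{locally exactly translation-invariant} by $t$ near $x_t$. Given uncountably many such $t$, pigeonhole on a fixed radius $\varepsilon$ produces $t_1\neq t_2$ with $\|t_1-t_2\|$ and $\|x_{t_1}-x_{t_2}\|$ tiny compared with $\varepsilon$; composing the two local isometries shows $S$ is locally invariant under translation by the short vector $t_1-t_2$, and iterating this invariance within the $\varepsilon$-ball yields an arithmetic progression of length about $\varepsilon/\eta$ inside $S$ along a line. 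Letting $\eta\to0$ gives arbitrarily many collinear points of $S$, contradicting \cref{cor:unifFinite} (bounded intersection with every line, which is where ``no line segment'' is used quantitatively). Your sketch never produces this local coincidence of $S$ with its translate, nor any quantitative use of the no-segment hypothesis, so the contradiction does not materialize; to fix the proof you would need to reinstate these two ingredients (or something equivalent to them).
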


The proof of \cref{lem:BadBounded} requires a few facts about definable sets. For each of these we fix a specific finitely generated o-minimal structure $\RR_{\mathcal F}$. We will use \cite{Cos00} as our reference for o-minimal geometry.
\begin{rem}\label{rem:complexity}
We make a note about our use of ``complexity'' (which doesn't appear in \cite{Cos00}). Given a formula $\phi$ in $a$ unbound variables with $b$ real constants, we can create a formula $\omega$ in $a+b$ unbound variables where all real constants have been replaced with unbound variables. For a given complexity, there are only finitely many such formulas $\omega$, and the definable set $S_{\omega}\subset \mathbb{R}^{a+b}$ has the property that for the projection $\pi:\mathbb{R}^{a+b}\to \mathbb{R}^b$ onto the last $b$ coordinates, the fibers are the definable sets $S_\phi$ for all $\phi$ corresponding to $\omega$. This then gives a ``family of definable sets'', which is the context where the theorems in \cite{Cos00} apply. There is no loss of generality in our formulation because every ``family of definable sets'' is defined by a formula of some complexity, which then bounds the complexity of each fiber of such a family.
\end{rem}
\begin{fact}[The uniform finiteness theorem~{\cite[Theorem 2.9]{Cos00}}]
\label{fact:unifFinite}
For $A\subset \mathbb{R}^d$ a definable set and a linear projection $L:\mathbb{R}^d\to \mathbb{R}^k$, if all fibers of $A\to L(A)$ are finite, then their sizes are bounded by a function of the complexity of $A$.
\begin{cor}
\label{cor:boundorinf}
If $S\subset \mathbb{R}^d$ is a definable set, then either $|S|$ is bounded above by a function of the complexity of $S$, or $S$ is uncountably infinite.
\end{cor}
\begin{proof}
If $S$ were countably infinite, then a generic projection to a 1-dimensional subspace would be a definable countably infinite set, which contradicts that definable subsets of $\mathbb{R}$ are finite unions of points and intervals.

If $|S|$ is finite, then the complexity bound follows by applying \Cref{fact:unifFinite} to the projection to $\mathbb{R}^0$.
\end{proof}
\end{fact}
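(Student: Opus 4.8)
The plan is to combine \Cref{fact:unifFinite} with the defining axiom of an o-minimal structure --- that every definable subset of $\RR$ is a finite union of points and intervals, and in particular is either finite or uncountable.

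First suppose $|S|<\infty$. I would apply \Cref{fact:unifFinite} to $A=S$ with $k=0$ and $L\colon\RR^d\to\RR^0$ the projection to a point: the map $A\to L(A)$ has a single fiber, namely $S$ itself, which is finite by assumption, so \Cref{fact:unifFinite} bounds $|S|$ by a function of the complexity of $S$. This gives the first alternative in the statement.

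It remains to show $S$ cannot be countably infinite, so assume for contradiction that it is. If $d=1$ this contradicts o-minimality directly. For $d\ge2$, pick $v\in\RR^d\setminus\{0\}$ and let $\pi_v\colon\RR^d\to\RR$ be $x\mapsto\langle v,x\rangle$. Writing $\phi$ for a formula defining $S$, the image $\pi_v(S)=\{t:(\exists x_1)\cdots(\exists x_d)(\phi(x_1,\dots,x_d)\wedge t=\langle v,x\rangle)\}$ is again a definable subset of $\RR$ (existential quantification over the fibre variables is permitted in a first-order formula). The map $\pi_v$ is non-injective on $S$ only if $v$ lies in $\{w:\langle w,p-q\rangle=0\}$ for some distinct $p,q\in S$; since $S$ is countable this is a countable union of proper linear subspaces of $\RR^d$ and hence does not exhaust $\RR^d$. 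Choosing $v$ outside it, $\pi_v$ is injective on $S$, so $\pi_v(S)$ is a countably infinite definable subset of $\RR$ --- contradicting o-minimality. Thus $S$ is finite or uncountably infinite, and in the finite case the bound above applies, which completes the proof.

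The argument is short; the only point needing care is verifying that the linear image $\pi_v(S)$ is genuinely definable and that a generic direction makes $\pi_v$ injective on the countable set $S$, and this (mild) bookkeeping is the main obstacle rather than any substantial difficulty. I would not expect to need any uniformity in the complexity of $\pi_v(S)$, since in the countable case mere countable infinitude already yields a contradiction.
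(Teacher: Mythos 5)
Your proposal is correct and follows essentially the same route as the paper: the finite case is handled by applying \Cref{fact:unifFinite} to the projection to $\RR^0$, and the countably infinite case is ruled out via a generic linear projection to $\RR$ (your explicit choice of $v$ avoiding the countably many hyperplanes orthogonal to differences of points of $S$ is just a spelled-out version of the paper's ``generic projection''). No gaps; the image $\pi_v(S)$ is indeed definable since real constants and quantifiers are allowed in defining formulas.
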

\begin{cor}
\label{cor:unifFinite}
If $S\subset \mathbb{R}^d$ is a definable set not containing a line segment, then for every line $\ell\subset \mathbb{R}^d$, $|S\cap \ell|$ is bounded by a function of the complexity of $S$.
\end{cor}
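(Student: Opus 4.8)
The plan is to realise all the sets $S\cap\ell$, as $\ell$ ranges over lines in $\RR^d$, as the fibres of a single definable set over a linear projection, and then invoke the uniform finiteness theorem (\Cref{fact:unifFinite}). First I would record the pointwise finiteness statement. Fix a line $\ell=\{p+tv:t\in\RR\}$ with $v\ne \vec 0$, and let $\phi_S$ be a formula defining $S$. Then $\psi(t):=\phi_S(p_1+tv_1,\dots,p_d+tv_d)$ is a first-order formula (with parameters $p,v$) in the single free variable $t$, so by the defining property of an o-minimal structure the set $\{t\in\RR:\psi(t)\text{ is true}\}$ is a finite union of points and intervals. Since $S$ contains no line segment, this set contains no interval, hence is finite; as $t\mapsto p+tv$ is a bijection $\RR\to\ell$, it follows that $S\cap\ell$ is finite.

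Next I would assemble the definable family. Let
$$B=\left\{(p,v,t)\in\RR^d\times\RR^d\times\RR:\ v\ne\vec 0,\ p+tv\in S\right\},$$
which is definable: if $\phi_S$ defines $S$, then $B$ is defined by $\bigl(\bigvee_{i}v_i\ne 0\bigr)\wedge\phi_S(p_1+tv_1,\dots,p_d+tv_d)$, a formula whose complexity exceeds that of $S$ by an amount depending only on $d$. Let $L:\RR^d\times\RR^d\times\RR\to\RR^{2d}$ be the linear projection onto the $(p,v)$-coordinates. By the previous paragraph together with the bijectivity of $t\mapsto p+tv$ for $v\ne\vec 0$, every fibre of $B\to L(B)$ is finite, and in fact the fibre over $(p,v)$ has the same size as $S\cap\ell_{p,v}$, where $\ell_{p,v}$ denotes the line through $p$ in direction $v$.

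Finally I would apply \Cref{fact:unifFinite} to $B$ and the projection $L$: since all fibres of $B\to L(B)$ are finite, their sizes are bounded by a function of the complexity of $B$, hence by a function of the complexity of $S$ (and of $d,\mathcal F$, as is the convention elsewhere in this section). Every line $\ell\subset\RR^d$ can be written as $\ell_{p,v}$ for some $p\in\RR^d$ and some $v\ne\vec 0$, so $|S\cap\ell|$ equals the size of the fibre of $B$ over $(p,v)$, which is at most this bound. This gives the claim.

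There is no substantive obstacle here; the one point that must not be overlooked is the exclusion of the direction $v=\vec 0$ in the definition of $B$. Without it, the fibre of $B$ over $(p,\vec 0)$ would be all of $\RR$ whenever $p\in S$ (and $S$ is infinite as soon as it is nonempty and not a single point, since a nonempty proper-dimensional definable set is uncountable by \Cref{cor:boundorinf} — or simply whenever $|S|=\infty$), so the hypothesis of \Cref{fact:unifFinite} would fail; restricting to $v\ne\vec 0$ is exactly what makes every fibre finite while still capturing every line.
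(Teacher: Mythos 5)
Your proposal is correct and follows essentially the same route as the paper: the set $B$ you construct is exactly the paper's auxiliary set $A=\{(v,w,\lambda):w\ne 0,\ v+\lambda w\in S\}$, the pointwise finiteness step is the same use of o-minimality on the line parametrization, and the conclusion is the same application of \Cref{fact:unifFinite} to the projection onto the $(p,v)$-coordinates. Your remark about excluding $v=\vec 0$ is a correct reading of the same condition $w\ne 0$ in the paper's definition.
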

\begin{proof}
First, note that $S\cap \ell$ not containing a line segment implies that $S\cap \ell$ is finite, by the defining property of o-minimality after a generic projection to $\mathbb{R}$. Now, consider the definable set
$A=\{(v,w,\lambda)\in \mathbb{R}^d\times \mathbb{R}^d\times\mathbb{R}:w\ne 0\text{ and }v+\lambda w\in S\}$ and the linear projection $L:(v,w,\lambda)\mapsto (v,w)$. Then $A$ has complexity bounded as a function of the complexity of $S$, and the result follows from \Cref{fact:unifFinite}.
\end{proof}
\begin{fact}[Constant-dimension loci are definable~{\cite[Theorem 3.18]{Cos00}}]\label{fact:fiberdimconst}
For a definable set $S\subset \mathbb{R}^d$, a linear projection $L:\mathbb{R}^d\to \mathbb{R}^q$, and any $k\in \NN$, the subset $\{b\in L(S):\dim(L|_S^{-1}(b))=k\}\subseteq L(S)$ is definable and has complexity bounded by the complexity of $S$.
\end{fact}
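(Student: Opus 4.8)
The plan is to reduce the statement to the elementary characterisation of dimension by coordinate projections. Write $\mathbb{R}^d=\mathbb{R}^q\times\mathbb{R}^{d-q}$ so that $L$ is the projection onto the first factor, and for $b\in\mathbb{R}^q$ let $S_b=L|_S^{-1}(b)\subseteq\mathbb{R}^{d-q}$ denote the fibre. The first ingredient is the standard fact from o-minimal dimension theory (a consequence of the cell decomposition theorem; see \cite[Section~3.3]{Cos00} and \cref{rem:contfphi}) that for any definable $A\subseteq\mathbb{R}^{d-q}$ and any $k\ge 0$ one has $\dim A\ge k$ if and only if there is a subset $J\subseteq\{1,\dots,d-q\}$ with $|J|=k$ such that the coordinate projection $\pi_J(A)\subseteq\mathbb{R}^k$ has nonempty interior. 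For ``$\Leftarrow$'': $\pi_J(A)$ having nonempty interior forces $\dim\pi_J(A)=k$, and $\dim A\ge\dim\pi_J(A)$ since $\pi_J$ restricts to a definable surjection $A\to\pi_J(A)$. For ``$\Rightarrow$'': an injective continuous definable map $\mathbb{R}^k\hookrightarrow A$ as in \cref{def:dimension} has $k$-dimensional image, which by cell decomposition contains a $k$-dimensional cell; such a cell is, after permuting coordinates, the graph of a continuous definable function over an open subset of some coordinate $k$-plane, and projecting $A$ onto those $k$ coordinates then has nonempty interior.

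The second ingredient is that ``$\pi_J(A)$ has nonempty interior'' is a first-order condition in the defining parameters. Fixing a formula $\phi_S$ of minimum complexity defining $S$ and viewing the first $q$ coordinates $b$ as free, nonempty interior of $\pi_J(S_b)$ is expressed by
$$\exists z\,\exists\varepsilon\,\bigl(\varepsilon>0\wedge\forall w\,(\|w-z\|^2<\varepsilon^2\rightarrow\exists u\,\phi_S(b,\,\sigma_J(w,u)))\bigr),$$
where $z,w$ range over $\mathbb{R}^k$, $u$ over $\mathbb{R}^{d-q-k}$, and $\sigma_J$ interleaves $w$ and $u$ into the coordinates indexed by $J$ and its complement. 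Taking the disjunction of these over the $\binom{d-q}{k}\le 2^d$ choices of $J$ yields a formula $\chi_k(b)$ defining $D_k:=\{b:\dim S_b\ge k\}$. Since $\chi_k$ is obtained from $\phi_S$ by prepending a bounded-in-$d$ number of quantifiers and connectives and taking at most $2^d$ disjuncts, its complexity is bounded by a function of $d$ and the complexity of $S$; hence $D_k$ is definable with complexity so bounded. Noting that $D_0=L(S)$ (a nonempty fibre has dimension $\ge 0$), for each $k\ge 0$ the desired set is exactly $\{b\in L(S):\dim S_b=k\}=D_k\setminus D_{k+1}$, which is again definable with complexity bounded by a function of $d$ and the complexity of $S$.

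I expect the only genuine obstacle to be the first ingredient — the equivalence between the dimension of \cref{def:dimension} (via definable injections) and the coordinate-projection characterisation — since this relies on cell decomposition and the basic structure theory of o-minimal sets. However, this equivalence is used purely at the level of equality of sets (to identify $\{b:\dim S_b\ge k\}$ with the set defined by $\chi_k$), so it never enters the complexity bookkeeping: all complexity bounds come from directly writing down $\chi_k$, and everything else is routine formula manipulation. Alternatively, and more briefly, one may simply invoke \cite[Theorem~3.18]{Cos00}, of which this Fact is a restatement in our complexity language via \cref{rem:complexity}.
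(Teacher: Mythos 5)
Your argument is correct, but it is worth noting that the paper does not prove this statement at all: it is quoted verbatim as a black box from Coste's notes (\cite[Theorem 3.18]{Cos00}), with the complexity clause obtained through the translation between ``bounded complexity'' and ``definable families'' explained in \cref{rem:complexity}. Your route is genuinely different in that it re-derives the fact from scratch: you express $\dim S_b\ge k$ through the coordinate-projection characterisation of dimension (some projection of the fibre onto a coordinate $k$-plane has nonempty interior), observe that this is a first-order condition in $b$ obtained from $\phi_S$ by adding boundedly many quantifiers, connectives and at most $\binom{d-q}{k}$ disjuncts, and then take $D_k\setminus D_{k+1}$. What this buys is transparency in the complexity bookkeeping — the bound is visible directly from the shape of $\chi_k$, with no appeal to how parameters are handled in a definable family — at the cost of importing the standard (cell-decomposition) equivalence between the dimension of \cref{def:dimension} and the projection characterisation, which you correctly flag and which only enters at the level of set equality. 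What the paper's citation buys is exactly the avoidance of re-proving that piece of o-minimal dimension theory, since \cite[Theorem 3.18]{Cos00} already packages fibre-dimension definability uniformly in families. Two small points to tidy if you wrote this out in full: the reduction to a coordinate projection requires a linear change of variables whose matrix entries add only boundedly many constant symbols to the formula (and one should allow $L$ to be an arbitrary linear map, not necessarily a coordinate projection), and the degenerate cases $k=0$ (where $D_0=L(S)$ is defined by $\exists u\,\phi_S$) and $k>d-q$ (where $D_k=\emptyset$) should be treated separately, as you essentially do.
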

\begin{fact}
\label{cor:localhomeo}
Suppose $X,Y\subset \mathbb{R}^d$ are definable sets and $\dim X\cap Y=\dim X=\dim Y$. Then there is a Euclidean ball $B(x,r)\subset \mathbb{R}^d$ with $x\in X\cap Y$ such that $X\cap B(x,r)=Y\cap B(x,r)$.
\end{fact}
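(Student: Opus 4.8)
The plan is to find the ball by moving to a point of $X$ that is simultaneously top‑dimensional for $X$ and bounded away from the ``extra'' set $Z:=Y\setminus X$. Set $k:=\dim X=\dim Y$, and note that $Z$ is definable with $\dim Z\le\dim Y=k$. First I would invoke the cell decomposition theorem to produce a $k$-dimensional cell $C\subseteq X$ (taking a cell decomposition adapted to $X$). The key local property is that a $k$-cell is definably homeomorphic to $(0,1)^k$; since dimension is invariant under definable homeomorphisms (see \cite{Cos00}), transporting along this homeomorphism shows that for every $x\in C$ and every $r>0$ the definable set $C\cap B(x,r)$ corresponds to a nonempty open subset of $(0,1)^k$, and hence $\dim\bigl(C\cap B(x,r)\bigr)=k$.

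Next I would show $C\not\subseteq\overline Z$. Since $C\subseteq X$ and $X\cap Z=\emptyset$, we have $C\cap Z=\emptyset$, hence $C\cap\overline Z\subseteq\overline Z\setminus Z$; by the standard frontier inequality for definable sets, $\dim(\overline Z\setminus Z)<\dim Z\le k$ when $Z\ne\emptyset$ (the case $Z=\emptyset$ being trivial). Thus $\dim(C\cap\overline Z)<k=\dim C$, so we may pick $x\in C\setminus\overline Z$. Finally, choosing $r>0$ with $B(x,r)\cap Z=\emptyset$ (possible as $x\notin\overline Z$), we get $Y\cap B(x,r)=(X\cup Z)\cap B(x,r)=X\cap B(x,r)$, and $k=\dim\bigl(C\cap B(x,r)\bigr)\le\dim\bigl(X\cap B(x,r)\bigr)\le\dim X=k$, which gives the claimed equality of the two sets together with the dimension statement.

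There is no substantive obstacle here beyond marshalling the right o‑minimal inputs in the form available in \cite{Cos00}: cell decomposition, the fact that a $k$-cell is locally $k$-dimensional at each of its points, invariance of dimension under definable homeomorphisms, and the frontier inequality $\dim(\overline Z\setminus Z)<\dim Z$. The only point requiring a moment's care is extracting the $k$-cell from a decomposition adapted to $X$ itself, so that $C\subseteq X$; everything else, in particular that $Y\setminus X$ is definable, is immediate.
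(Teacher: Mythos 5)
Your argument is correct, but it follows a genuinely different route from the paper's. The paper first restricts to a large open ball $B(0,R)^{\circ}$ chosen (via the continuous injective parametrisation from \cref{def:dimension} and \cref{rem:contfphi}, plus compactness) so that $X':=X\cap B(0,R)^{\circ}$ and $Y':=Y\cap B(0,R)^{\circ}$ still have full dimension; it then passes to the closure using $\dim \overline{Y'}=\dim Y'$ and invokes \cite[Theorem 4.4]{Cos00}, a structure result for a definable subset of a \emph{closed and bounded} definable set, to produce the point $x$ at which $X'$ and $\overline{Y'}$ agree locally, finally shrinking $r$ to return to $X$ and $Y$. You avoid the boundedness reduction and the closure step entirely: you work globally with $Z=Y\setminus X$, extract a top-dimensional cell $C\subseteq X$ from a cell decomposition adapted to $X$, note that $C$ has local dimension $k$ at each of its points (via invariance of dimension under definable homeomorphisms), and use the frontier inequality $\dim(\overline Z\setminus Z)<\dim Z$ to find $x\in C\setminus\overline Z$, after which the ball avoiding $\overline Z$ does the job. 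Your inputs (cell decomposition, monotonicity and bijection-invariance of dimension, and the frontier inequality, which is part of the same \cite[Theorem~3.22]{Cos00} the paper already cites) are all standard and arguably more elementary than the closed-and-bounded machinery the paper leans on, at the cost of explicitly invoking cell decomposition, which the paper's proof does not need to mention; the paper's version is shorter given its chosen black box. One cosmetic remark: you do not actually need that a $k$-cell is definably homeomorphic to $(0,1)^k$ — it suffices that the defining coordinate projection maps $C$ homeomorphically onto an open subset of $\mathbb{R}^k$, so that $C\cap B(x,r)$ maps onto a nonempty open set of dimension $k$ — but this does not affect correctness.
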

\begin{proof}
Set $k=\dim(X)=\dim(X\cap Y)=\dim(Y)$, and let $W=X\cup Y \setminus (X\cap Y)$. Then $\dim(\overline{W}\setminus W)\le \dim(W)-1$ by \cite[Theorem 3.22]{Cos00}, so $\dim(\overline{W}\setminus W)\le \dim(X\cup Y)-1= \max(\dim(X),\dim(Y))-1=k-1$. Hence $(X\cap Y)\setminus \overline{W}=(X\cap Y)\setminus (\overline{W}\setminus W)$ is nonempty. Since $\overline{W}$ is closed, any $x\in (X\cap Y)\setminus \overline{W}$ has an open ball $B(x,r)$ not intersecting $W$, meaning $B(x,r)\cap (X\cup Y)\subset X\cap Y$. Therefore $X\cap B(x,r)=(X\cap Y)\cap B(x,r)=Y\cap B(x,r)$ as desired.
\end{proof}

Now, we prove \Cref{prop:twofacts}, assuming \Cref{lem:BadBounded}.

\begin{proof}[Proof of \Cref{prop:twofacts}]
As we have already established the second fact, we only prove the first. We induct on $\dim S$, noting that the theorem is trivial for $\dim S=0$ because then $S$ is a union of a bounded (in terms of $r$) number of points by \Cref{cor:boundorinf}. Recalling \cref{lem:BadBounded}, let $v_1,\dots,v_s$ be an enumeration of $\Bad(S)$, where $s=|\Bad(S)|$ is bounded in terms of the complexity $r$ of $S$. Let $L_i:\mathbb{R}^n\to \mathbb{R}^{n-1}$ be the linear projection onto the orthogonal complement of $v_i$. Then for any $x\in L_i(S)$, the fiber $L_i|_S^{-1}(x)\subset S$ is a definable subset of the line $L_i^{-1}(x)$, and is therefore a finite union of points and intervals. But $S$ does not contain any intervals, so it is a finite union of points.

By the uniform finiteness theorem (\Cref{fact:unifFinite}), there is some $C_r$ depending on $r$ such that each $|L_i^{-1}(x)\cap S|\le C_r$. Then, for $0\le i_1,\dots,i_s\le C_r$, consider the set  $$S_{i_1,\dots,i_s}:=\{x\in S: |(x+\mathbb{R}_{\ge 0}v_j)\cap S|=i_j\text{ for }1\le j \le s\}.$$
That is to say, $S_{i_1,\dots,i_s}$ is the set of points in $S$ for which exactly $i_j$ points of $S$ lie ``ahead'' of $S$ in the $v_j$-direction. Note that this set is definable, because the condition $|(x+\mathbb{R}_{\ge 0}v)\cap S|=i$ may be interpreted as the statement that there exist distinct nonnegative real numbers $\lambda_1=0,\lambda_2,\dots,\lambda_{i}$ such that $x+\lambda_iv\in S$ and there does not exist a non-negative real number $\lambda$ distinct from $\lambda_1,\dots,\lambda_i$ such that $x+\lambda v\in S$.

By the choice of $C_r$, the sets $S_{i_1,\dots,i_s}$ partition $S$. If $\dim S_{i_1,\ldots,i_s}<\dim S$ we can use the inductive hypothesis to decompose $S_{i_1,\ldots,i_s}$ into a bounded (in terms of $r$) number of sets in $\mathcal{I}$. On the other hand, when $\dim S_{i_1,\ldots,i_s}=\dim S$ then we claim that $\Bad(S_{i_1,\dots,i_s})=\emptyset$, meaning that we already have $S_{i_1,\dots,i_s}\in\mathcal I$. To see this, note that $\Bad(S_{i_1,\dots,i_s})\subseteq\Bad(S)=\{v_1,\dots,v_s\}$, but by construction $S_{i_1,\dots,i_s}\cap (S_{i_1,\dots,i_s}+v_j)=\emptyset$ for all $j$.
\end{proof}
It remains to prove \Cref{lem:BadBounded}.
\begin{proof}[Proof of \Cref{lem:BadBounded}]
We first claim that $\Bad(S)$ is definable with complexity bounded in terms of the complexity of $S$. Consider the definable set $A=\{(x,t):t\in \mathbb{R}^d,x\in S\cap (S-t)\}\subset \mathbb{R}^d\times\mathbb{R}^d$, and the projection $\pi:A\to \mathbb{R}^d$ defined by $(x,t)\mapsto t$. We can then write $\Bad(S)=\{t\in \pi(A):\dim \pi^{-1}(t)=\dim(S)\}\subseteq \pi(A)$, so by \Cref{fact:fiberdimconst}, $\Bad(S)$ is definable and has complexity bounded in terms of the complexity of $A$ (which in turn is bounded in terms of the complexity of $S$). By \Cref{cor:boundorinf}, it follows that either $|\Bad(S)|$ is bounded in terms of the complexity of $S$, or is uncountably infinite.

So, it suffices to show that $\Bad(S)$ is countable. Say that $x,y\in S$ are \emph{$r$-isometric} if $(B(x,r)\cap S)+(y-x)=B(y,r)\cap S$, or equivalently $B(x,r)\cap S=B(x,r)\cap (S-(y-x))$. We next claim that for each $t\in \Bad(S)$ there is $x_t\in S$ and $r_t>0$ such that $x_t+t\in S$ and such that $x_t$ and $x_t+t$ are $r_t$-isometric. 

For any $t\in \Bad(S)$ we have  $\dim(S\cap(S-t))=\dim S=\dim(S-t)$, so by \Cref{cor:localhomeo} we obtain $x_t\in S\cap (S-t)$ and $r_t>0$ such that
$S\cap B(x_t,r_t)=(S-t)\cap B(x_t,r_t),$ meaning that $x_t$ and $x_t+t$ are $r_t$-isometric, as desired.

Now, to show $\Bad(S)$ is countable, we show that the set of $t\in \Bad(S)$ with $r_t\ge \varepsilon$ is finite for all $\varepsilon$ (we may then consider a countable sequence $\varepsilon_1,\varepsilon_2,\dots$ converging to zero). Let $F_\varepsilon\subset \mathbb{R}^d\times \mathbb{R}^d$ be the definable set $$F_\varepsilon:=\{(x,t)\in \mathbb{R}^d\times (\mathbb{R}^d\setminus 0): x,x+t\in S\text{ are $\varepsilon$-isometric}\}\subset \mathbb{R}^d\times \mathbb{R}^d.$$
Then $F_\varepsilon$ contains $(x_t,t)$ for every $t\in \Bad(S)$ with $r_t\ge \varepsilon$. Again, let $\pi$ be the projection $(x,t)\mapsto t$, so it suffices to show that the definable set $\pi(F_\varepsilon)$ is finite.

For every $t\in \pi(F_\varepsilon)$, let $x_t\in \mathbb{R}^d$ be some element such that $(x_t,t)\in F_\varepsilon$ (we have already specified such $x_t$ when $t\in \Bad(S)$ and $r_t\ge \epsilon$). Suppose for the purpose of contradiction that $\pi(F_\varepsilon)$ is uncountably infinite. Then for every $\eta<\varepsilon/2$, by the pigeonhole principle there are distinct $t_1,t_2$ such that $\|x_{t_1}-x_{t_2}\|\le \eta$ and $\|t_1-t_2\|\le \eta$.
We claim that
$x_{t_1}$ and $x_{t_1}+(t_1-t_2)$ are $(\varepsilon-2\eta)$-isometric. Indeed, note that for any subsets $D_1\subset B(x_{t_1},\varepsilon)$ and $D_2\subset B(x_{t_2},\varepsilon)$ we have $D_1\cap S=((D_1+t_1)\cap S)-t_1$ and $D_2\cap S=((D_2+t_2)\cap S)-t_2$.
Taking $D_1=B(x_{t_1},\varepsilon-2\eta)$ we have
$$B(x_{t_1},\varepsilon-2\eta)\cap S=(B(x_{t_1}+t_1,\varepsilon-2\eta)\cap S)-t_1,$$
and taking $D_2=B(x_{t_1}+t_1-t_2,\varepsilon-2\eta)\subset B(x_{t_2},\varepsilon)$, we have
$$B(x_{t_1}+t_1-t_2,\varepsilon-2\eta)\cap S=(B(x_{t_1}+t_1,\varepsilon-2\eta)\cap S)-t_2.$$
Adding $t_2-t_1$ to this last equality and comparing with the previous one, we obtain that
$x_{t_1}$ and $x_{t_1}+(t_1-t_2)$ are $(\varepsilon-2\eta)$-isometric as desired.

Iteratively applying this fact, we see that $x_{t_1}+\ell(t_1-t_2)\in S$ for $\ell\in \mathbb{N}$ with $0\le \ell \le (\varepsilon-2\eta)/\eta$. Taking $\eta\to 0$, this gives arbitrarily long arithmetic progressions contained in $S$, contradicting \Cref{cor:unifFinite}.
\end{proof}

\section{Semi-algebraic sets}\label{sec:GL}

In this section we prove \Cref{thm:Gotsman}, giving a bound of the form $\Pr(X\in S)\le (\log n)^{O(1)}/\sqrt n$ when $S$ is semi-algebraic. As discussed in the introduction, the main tool we use in the proof of \Cref{thm:Gotsman} will be the following theorem of Kane~\cite{Kan14} on the total influence (average sensitivity) of polynomial threshold functions.
\begin{thm}[{\cite[Theorem 1.2]{Kan14}}]\label{thm:Kane-GL}
Every degree-$r$ threshold function $f$ has
$$\operatorname{AS}(f)\le \sqrt n (\log n)^{O(r\log r)}2^{O(r^2 \log r)}.$$
\end{thm}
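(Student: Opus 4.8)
The plan is to reduce the average sensitivity of a degree-$r$ threshold function $f=\operatorname{sgn}(p)$, coordinate by coordinate, to an anti-concentration estimate for degree-$r$ polynomials, and then to obtain that estimate by transferring to Gaussian space and recursing on the degree. First I would set up the reduction. Since $\xi_i^2=1$, the polynomial $p$ is multilinear, so for each $i$ we may write $p=A_i(\xi_{-i})+\xi_i B_i(\xi_{-i})$ with $B_i=\partial_i p$; flipping $\xi_i$ changes the sign of $p$ exactly when $\operatorname{sgn}(A_i+B_i)\ne\operatorname{sgn}(A_i-B_i)$, i.e.\ when $|A_i(\xi_{-i})|\le|B_i(\xi_{-i})|$. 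Hence $\operatorname{AS}(f)=\sum_i\operatorname{Inf}_i(f)\le\sum_i\Pr\!\left(|A_i(\xi)|\le|B_i(\xi)|\right)$, where $A_i,B_i$ have degree at most $r$ and, after normalizing $\|p\|_2=1$, satisfy $\|A_i\|_2\le 1$ and $\sum_i\|B_i\|_2^2=\sum_{S}|S|c_S^2\le r$. For $r=1$ this is essentially the interval form of the Erd\H os--Littlewood--Offord theorem: each term is an anti-concentration probability for a $\pm$-sum in a window of width comparable to $|a_i|$, hence $O(|a_i|/\sigma_i)$ with $\sigma_i$ the standard deviation of the remaining linear form, and Cauchy--Schwarz (after arranging that no single $a_i$ dominates) gives $\sum_i|a_i|/\sigma_i=O(\sqrt n)$. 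The whole difficulty is to replace Erd\H os--Littlewood--Offord by a sufficiently strong anti-concentration statement for degree-$r$ polynomials.

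For $r\ge 2$ I would pass to Gaussian space, where the Carbery--Wright inequality $\Pr_{g\sim N(0,I)}(|q(g)|\le\varepsilon\|q\|_2)=O(r\varepsilon^{1/r})$ is available for degree-$r$ $q$. The passage is via the invariance principle, which is only valid for \emph{regular} polynomials (no coordinate carrying more than a $\tau$-fraction of the variance), so I would first run a critical-index/restriction decomposition: the number of high-influence coordinates one must fix before the remaining polynomial becomes $\tau$-regular is bounded in terms of $r$ and $\log(1/\tau)$, so $f$ decomposes into $2^{O_r(\log(1/\tau))}$ restricted threshold functions to which the invariance principle applies. Because $\operatorname{sgn}$ is discontinuous, the invariance step needs an FT-mollification of $\operatorname{sgn}$ at an appropriate scale; balancing the mollification error against the regularity parameter forces $\tau=n^{-\Theta(1)}$, which is where the $(\log n)^{O(r\log r)}$ factor is generated, while the branching over restrictions contributes the $2^{O(r^2\log r)}$ factor.

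This reduces the problem to bounding $\sum_i\Pr_g\!\left(|A_i(g)|\le|B_i(g)|\right)$ for a regular degree-$r$ Gaussian polynomial $p$ with $\|p\|_2=1$. A union bound gives $\sum_i\bigl(\Pr(|A_i|\le t_i)+\Pr(|B_i|>t_i)\bigr)$; choosing $t_i\asymp\|B_i\|_2$ and applying Carbery--Wright together with Markov's inequality yields $\sum_i O(r\|B_i\|_2^{1/r})$, and H\"older against $\sum_i\|B_i\|_2^2\le r$ gives only $O(r^{O(1)}n^{1-1/(2r)})$ --- too weak for $r\ge 2$. To reach the correct exponent $\sqrt n$, the key idea is a \emph{structure theorem / degree recursion}: a degree-$r$ polynomial whose level sets are ``too thick'' (poorly anti-concentrated at the relevant scale) must be close to a polynomial of lower degree, or to a bounded-degree function of few linear forms; one peels off such structured parts and treats them separately, so the anti-concentration bound for degree $r$ is driven by the bound for degree $r-1$. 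Carried through $r$ levels of recursion, this replaces the weak exponent $1-1/(2r)$ by $1/2$, at the cost of the $2^{O(r^2\log r)}$ and further logarithmic factors. I expect this recursion --- formulating the structural dichotomy at each scale and making the bookkeeping of parameters close --- to be the main obstacle; it is also exactly the step where a loss-free argument would upgrade the conclusion to the full Gotsman--Linial conjecture.

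Finally I would assemble the pieces: combine the Gaussian bound with the invariance transfer, sum the contributions of the restricted pieces coming from the critical-index decomposition, and collect all $r$-dependent and logarithmic factors into the claimed bound $\operatorname{AS}(f)\le\sqrt n\,(\log n)^{O(r\log r)}2^{O(r^2\log r)}$.
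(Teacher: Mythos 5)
First, a point of context: the paper does not prove this statement at all --- it is Kane's theorem, quoted verbatim from \cite[Theorem 1.2]{Kan14} and used as a black box in the proof of Theorem~\ref{thm:Gotsman}. So the comparison is necessarily with Kane's own argument, not with anything in this paper. Your outline does echo the general shape of that argument (reduce $\operatorname{AS}(f)$ to probabilities of the form $\Pr(|A_i|\le|B_i|)$ for the multilinear decomposition $p=A_i+\xi_iB_i$; transfer to Gaussian space via regularity/restriction arguments and mollification of $\operatorname{sgn}$; use Carbery--Wright; then beat the naive exponent by a structural result for poorly anticoncentrated polynomials), and your observation that the union bound plus Carbery--Wright alone only yields $O(r^{O(1)}n^{1-1/(2r)})$ is correct and is exactly why something more is needed.

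The genuine gap is that the ``something more'' --- the structure theorem asserting that a degree-$r$ polynomial which is poorly anticoncentrated at the relevant scale must be close to a lower-degree or otherwise structured polynomial, together with the degree recursion and its parameter bookkeeping that converts the exponent $1-1/(2r)$ into $1/2$ --- is not proved in your proposal; you explicitly defer it as ``the main obstacle.'' That step is the entire content of Kane's work (his structure theorem for poorly anticoncentrated Gaussian chaoses and the induction built on it), so as written your argument establishes only the weak bound you yourself derive, not the stated $\sqrt n\,(\log n)^{O(r\log r)}2^{O(r^2\log r)}$. Secondary, smaller issues: the claimed accounting of where the $(\log n)^{O(r\log r)}$ and $2^{O(r^2\log r)}$ factors arise (mollification scale, critical-index branching) is asserted rather than verified, and in the degree-one base case the estimate $\Pr(|A_i|\le|a_i|)=O(|a_i|/\sigma_i)$ is not valid for Rademacher sums when a single coefficient dominates, so the reduction to ``no dominating coefficient'' needs to be made precise (for instance via the interval form of the Erd\H os--Littlewood--Offord theorem relative to the largest coefficient). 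None of this affects the paper, which only needs the theorem as stated, but as a standalone proof the proposal is an outline whose crux is missing.
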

The application of \cref{thm:Kane-GL} will be very similar to an argument due to Kane used to prove bounds for the polynomial Littlewood--Offord problem (see \cite[Section~3]{MNV16}). We will require two lemmas concerning semi-algebraic sets. The first is a bound on the average sensitivity of the indicator function of a semi-algebraic set.

\begin{lem}
\label{lem:semialgsens}
There is a constant $C_r$ so that every semi-algebraic set $S\subset \mathbb{R}^d$ of description complexity $r$ has
$$\operatorname{AS}(1_S)\le C_r\sqrt{n}(\log n)^{C_r}.$$
\end{lem}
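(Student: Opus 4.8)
The plan is to bound $\operatorname{AS}(1_S)$ by relating the average sensitivity of the indicator function of $S$ (pulled back under the affine map $\xi \mapsto a_1\xi_1 + \dots + a_n\xi_n$) to the average sensitivities of a bounded number of polynomial threshold functions, and then invoke Kane's theorem (\Cref{thm:Kane-GL}). Concretely, write $S$ as a Boolean combination of at most $r$ sets of the form $\{f=0\}$ and $\{g>0\}$ with $\deg f, \deg g \le r$. The key observation is that each polynomial $f$ of degree $\le r$ composes with the affine map to yield a degree-$\le r$ polynomial $p(\xi_1,\dots,\xi_n)$ on the cube, so that $\{X \in \{f=0\}\}$ and $\{X \in \{f>0\}\}$ are events cut out by the sign of a degree-$r$ polynomial; the corresponding sign function $\operatorname{sgn}(p)$ is a degree-$r$ polynomial threshold function on $\{-1,1\}^n$.

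The main step I would carry out: if a Boolean function $h$ on $\{-1,1\}^n$ can be written as a Boolean combination of functions $h_1,\dots,h_r$, then $\operatorname{AS}(h) \le \sum_i \operatorname{AS}(h_i)$ — because a coordinate flip that changes $h$ must change at least one of the $h_i$, so $\Inf_j(h) \le \sum_i \Inf_j(h_i)$ for every $j$, and summing over $j$ gives subadditivity. Thus it suffices to handle each atomic piece. For a strict-inequality atom $\{g>0\}$, the pullback is exactly a degree-$r$ PTF and Kane's bound applies directly. The slightly delicate point is the equality atom $\{f = 0\}$: here I would write $1_{\{f=0\}} = 1_{\{f \ge 0\}} \cdot 1_{\{f \le 0\}}$ (or equivalently express $\{f=0\}$ as the Boolean combination $\{f \ge 0\} \wedge \neg\{f > 0\}$), noting $\{f \ge 0\}$ is $\operatorname{sgn}(f)$ with the sign convention given in the paper (where $\operatorname{sgn}(y) = -1$ for $y \le 0$, so $\{f \ge 0\}$ needs $\operatorname{sgn}(-f)$ complemented, but in any case each of $\{f>0\}, \{f\ge 0\}$ is a PTF up to negating the polynomial and/or complementing the output, neither of which changes average sensitivity). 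So the equality atom is again a Boolean combination of $O(1)$ degree-$r$ PTFs.

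Putting it together: $1_S$ is a Boolean combination of at most $O(r)$ degree-$r$ polynomial threshold functions composed with the affine map; by subadditivity of average sensitivity over Boolean combinations, $\operatorname{AS}(1_S) \le O(r) \cdot \max_i \operatorname{AS}(f_i)$ where each $f_i$ is a degree-$r$ PTF; and Kane's theorem bounds each $\operatorname{AS}(f_i) \le \sqrt{n}(\log n)^{O(r \log r)} 2^{O(r^2 \log r)}$. Absorbing the $O(r)$ factor and the $2^{O(r^2\log r)}$ factor into a constant $C_r$ depending only on $r$, and the $(\log n)^{O(r\log r)}$ into $(\log n)^{C_r}$, gives $\operatorname{AS}(1_S) \le C_r \sqrt{n} (\log n)^{C_r}$, as required. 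The one point to be careful about is making sure the affine substitution $\xi \mapsto \sum a_i \xi_i$ does not raise the degree (it does not — it is degree one in the $\xi_i$, so composition with a degree-$r$ polynomial in $d$ variables stays degree $r$), and that we are measuring average sensitivity of $1_S$ with respect to the input distribution on $\{-1,1\}^n$, which is exactly what the composed PTFs see. I expect the only real obstacle is bookkeeping: tracking the sign conventions for $\operatorname{sgn}$ and confirming that complementation and polynomial-negation leave average sensitivity unchanged, both of which are immediate.
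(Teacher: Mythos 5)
Your proof is correct and follows essentially the same route as the paper: decompose $1_S$ (pulled back through $\xi\mapsto a_1\xi_1+\dots+a_n\xi_n$, which preserves degree) into a Boolean combination of boundedly many degree-$r$ polynomial threshold functions, use subadditivity of influences over Boolean combinations, and apply Kane's bound (\cref{thm:Kane-GL}). The only cosmetic difference is the treatment of equality atoms: you write $\{f=0\}$ as the conjunction of the two threshold conditions $\{f\ge 0\}$ and $\{f\le 0\}$, whereas the paper notes that $1_{f=0}$ is the threshold function of $f^2$ (degree $2r$, which is harmless since the constants may depend on $r$); both work.
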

\begin{proof}
Given threshold functions $g_1,\dots,g_m$ of degree $r$ and a sign pattern $(\varepsilon_1,\dots,\varepsilon_m)\in\{-1,1\}^m$, it is easy to see that the function $$g(x)=\begin{cases}1 & (g_1(x),\dots,g_m(x))=(\varepsilon_1,\dots,\varepsilon_m)\\0 & \text{otherwise} \end{cases}$$ satisfies $\operatorname{AS}(g)\le \sum_{i=1}^m \operatorname{AS}(g_i)\le m\sqrt n (\log n)^{O(r\log r)}2^{O(r^2 \log r)}$ by \Cref{thm:Kane-GL}. The result follows from applying this to the threshold functions associated to the polynomial conditions exhibiting the degree-complexity $r$ (noting that for a polynomial $f:\mathbb{R}^d\to \mathbb{R}$, the function $1_{f=0}$ is the threshold function for $f^2$).
\end{proof}

The second is the following very weak Littlewood--Offord-type bound, which we will ``boost'' in the course of the proof of \Cref{thm:Gotsman}.

\begin{lem}\label{lem:Algebraic-weak}
There is some $N_{r,d}$ depending only on $r$ and $d$ such that the following holds. Let $S\in \RR^d$ be a semi-algebraic set with description complexity $r$ not containing a line segment. Then if $n\ge N_{r,d}$ we have $\Pr(X\in S)<1$, where $X=a_1\xi_1+\dots+a_n\xi_n$ for independent Rademacher $\xi_1,\dots,\xi_n$.
\end{lem}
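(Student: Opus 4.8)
The plan is to reformulate the lemma geometrically. We have $\Pr(X\in S)<1$ precisely when the support of $X$ is not contained in $S$, and this support is exactly the Minkowski sum $\{-a_1,a_1\}+\dots+\{-a_n,a_n\}$, which is a sum of $n$ two-element sets since each $a_i\ne 0$. So it suffices to prove: there is $N_{r,d}$ so that no semi-algebraic $S\subseteq\RR^d$ of description complexity $r$ containing no line segment contains a Minkowski sum $A_1+\dots+A_n$ with every $|A_i|=2$ and $n\ge N_{r,d}$. I would establish this by induction on $\dim S$, checking along the way that the description complexity of the sets arising in the induction stays bounded in terms of $r$ and $d$.

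For the base case $\dim S=0$ the set $S$ is finite with $|S|$ bounded in terms of $r,d$ by \Cref{cor:boundorinf}. Writing $A_i=a_i+\{0,v_i\}$ with each $v_i\ne 0$, pick a linear functional $\pi:\RR^d\to\RR$ generic enough that $\pi(v_i)\ne 0$ for all $i$. Then $\pi(A_1+\dots+A_n)=\pi(A_1)+\dots+\pi(A_n)$ is a sumset of $n$ two-element subsets of $\RR$, hence (using $|X+Y|\ge|X|+|Y|-1$ for finite sets of reals) has at least $2n-(n-1)=n+1$ elements, while it is contained in $\pi(S)$; thus $n+1\le|S|$, bounding $n$.

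For the inductive step I would use the two facts about line-segment-free semi-algebraic sets proved earlier in this section: every line meets $S$ in at most $C_{r,d}$ points (\Cref{cor:unifFinite}), and $\Bad(S)=\{t\ne 0:\dim(S\cap(S-t))=\dim S\}$ has size at most $g(r)$ (\Cref{lem:BadBounded}). Suppose $A_1+\dots+A_n\subseteq S$ with $A_i=a_i+\{0,v_i\}$, $v_i\ne 0$, and $\dim S=k\ge 1$. If some $v_i\notin\Bad(S)$, then $S':=S\cap(S-v_i)$ has $\dim S'<k$, has description complexity at most $2r$, contains no line segment (being a subset of $S$), and contains a translate of $\sum_{j\ne i}A_j$ --- which, after absorbing the translation into one of the summands, is a Minkowski sum of $n-1$ two-element sets --- so the inductive hypothesis bounds $n-1$. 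Otherwise every $v_i$ lies in the $g(r)$-element set $\Bad(S)$; if $n>g(r)(C_{r,d}-1)$ then by pigeonhole some value $v$ is taken by $m\ge C_{r,d}$ of the $v_i$, and the corresponding two-element sets sum to an $(m+1)$-term arithmetic progression with common difference $v$, which (translated by any element of the sum of the remaining $A_j$) lies inside $S$ on a single line, contradicting $|S\cap\ell|\le C_{r,d}$. Either way $n$ is bounded in terms of $r,d$. Since the dimension strictly decreases at each recursive step and starts at $\le d$, at most $d$ intersections occur, so the description complexity never exceeds $2^d r$; this makes all the constants uniform, and the induction closes.

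The main obstacle is the case where intersecting $S$ with a translate does \emph{not} decrease the dimension: this is precisely where \Cref{lem:BadBounded} is essential, and the observation that makes the argument go through is that a difference vector repeated many times among the $A_i$ forces a long arithmetic progression into $S$, which \Cref{cor:unifFinite} forbids. The rest is routine bookkeeping: confirming that $S\cap(S-t)$ inherits both bounded description complexity and the no-line-segment property, and that the recursion terminates after at most $d$ steps.
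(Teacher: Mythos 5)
Your argument is correct, but it takes a different route from the paper's. The paper proves \cref{lem:Algebraic-weak} in one line by citing \cref{prop:o-minimal-complexity} together with \cref{thm:hypercompabs}, i.e.\ it deduces the qualitative statement from the quantitative bound $\Pr(X\in S)\le C n^{-d/2^d}$ obtained via the bounded-complexity hypergraph machinery, the weighted K\H ov\'ari--S\'os--Tur\'an theorem and Hal\'asz's theorem. You instead rephrase the statement as ``$S$ contains no Minkowski sum of $n$ two-element sets for $n$ large'' and run a direct induction on $\dim S$, using only \cref{cor:boundorinf} for the base case, \cref{lem:BadBounded} to find a translation direction $v_i\notin\Bad(S)$ that strictly drops the dimension of $S\cap(S-v_i)$, and \cref{cor:unifFinite} to rule out (via pigeonhole on repeated difference vectors) the case where all $v_i$ lie in the small set $\Bad(S)$, since that would place a long arithmetic progression of $S$ on a single line; the complexity bookkeeping ($r\mapsto 2r$ per step, at most $d$ steps) is handled correctly, and there is no circularity since the two cited results are proved independently of this lemma. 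What each approach buys: the paper's route gives the much stronger quantitative decay for free (the lemma is just a corollary), while yours is more elementary and self-contained, avoiding the hypergraph/anti-concentration machinery entirely. It is worth noting that your argument closely parallels the paper's own proof of the more general \cref{lem:algebraic-complicated-weak} (of which this lemma is essentially the case $W=\{0\}$): there too the key steps are that $\Bad$ is small, that non-bad translation differences reduce dimension, and that long progressions on a line are forbidden, with the induction parameterised by dimension and complexity exactly as in your bookkeeping.
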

\begin{proof}
This follows from \cref{prop:o-minimal-complexity} and \cref{thm:hypercompabs} (we remark that the proof could be greatly simplified in this particular case using the existing machinery of irreducible components for algebraic sets and B\'ezout's theorem).
\end{proof}

\begin{proof}[Proof of \cref{thm:Gotsman}]
Let $N=N_{r,d}$, in the notation of \cref{lem:Algebraic-weak}. We claim that \begin{equation}\Pr(X\in S)\le2^{N} \sum_{i=1}^{N}\Inf_i(1_S).\label{eq:GotsLin}\end{equation}
If we can show this, then by symmetry, the same fact will then hold for the corresponding sum over any set of $N$ indices. Applying this fact to $n/N$ disjoint sets of $N$ indices, it will follow that
$$(n/N)\Pr(X\in S)\le2^{N} \sum_{i=1}^n\Inf_i(1_S)=2^N\operatorname{AS}(1_S),$$
at which point we can conclude with \Cref{lem:semialgsens}.

To prove \cref{eq:GotsLin}, let $\mathcal G$ be the set of all $\xi\in \{0,1\}^n$ such that $X\in S$, and let $\mathcal G'$ be the set of all $\xi\in \{0,1\}^n$ such that the status of the event $X\in S$ changes when we change some $\xi_i$ with $i\le N$. It suffices to prove that $|\mathcal G|\le 2^{N}|\mathcal G'|$. In turn, to prove this it suffices to show that for any $x\in\mathcal G$ there is some $\xi'\in\mathcal G'$ agreeing with $\xi$ on all but the first $N$ coordinates.

To see this, start with some $\xi\in \mathcal G$, meaning that $X(\xi)\in S$. By \cref{lem:Algebraic-weak}, there is some $\xi''\in S$, agreeing with $\xi$ on all but the first $N$ coordinates, such that $X(\xi'')\notin S$. Now, switch from $\xi$ to $\xi''$ by flipping bits one-by-one; along the way we must visit some $\xi'\in \mathcal G'$.
\end{proof}

\section{A strong bound for definable sets}
\label{sec:strongbound}

In this section we prove \cref{thm:baby-forwards}, giving a bound of the form $\Pr(X\in S)\le n^{-1/2+o(1)}$ when $S$ is definable. We start with the following consequence of \cref{thm:pointvsS} and the Tao--Vu inverse theorem~\cite{TV09b}, which gives a strong bound on  $\Pr(X\in S)$ unless most of the coefficients $a_i$ lie in a small generalised arithmetic progression with low rank.

\begin{thm}\label{thm:baby-inverse}
Let $\alpha,C>0$, and $d\in \NN$. Then there are constants $q=q_{d,C,\alpha}\in \NN$ and $B=B_{d,C,\alpha}>0$ such that the following is true.

Consider nonzero $d$-dimensional vectors $a_{1},\dots,a_{n}\in\RR^{d}$, write
$X=a_{1}\xi_{1}+\dots+a_{n}\xi_{n}$, where $\xi_{1},\dots,\xi_{n}$
are i.i.d.\ Rademacher random variables, and let $S\subseteq\RR^d$ be a set which is definable with respect to a finitely generated o-minimal structure $\mathcal F$ and does not contain any line segment. Then for $n$ sufficiently large (in terms of $\mathcal F$ and the complexity of $S$), if $\Pr(X\in S)\ge n^{-C}$, then there is a generalised arithmetic progression $Q$ with rank at most $q$ and size at most $n^{B}$ such that $a_i\in Q$ for all but at most $n^{\alpha}$ indices $i$.
\end{thm}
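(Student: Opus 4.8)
The plan is to combine \Cref{thm:pointvsS} with the Tao--Vu inverse theorem in a fairly direct way, using a mild amount of rescaling/conditioning to bridge the gap between ``$\Pr(X\in S)$ is polynomially large'' and ``$\Pr(X=x)$ is polynomially large for some $x$''. The point is that \Cref{thm:pointvsS} already gives us an inequality of the form $\Pr(X\in S)\le C_S\,\rho^{c}$, where $c=1/((k+1)2^k)$ depends only on $k=\dim S$ (and $C_S$ on $d$ and the complexity of $S$), and $\rho=\max_{x}\Pr(X=x)$. So the assumption $\Pr(X\in S)\ge n^{-C}$ immediately forces $\rho\ge (n^{-C}/C_S)^{1/c}\ge n^{-C'}$ for some $C'=C'(d,C,r)$, once $n$ is large enough in terms of $C_S$ (hence in terms of $\mathcal F$ and the complexity of $S$; note $k\le d$ so $c$ is bounded below independently of $S$). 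Here I would need the hypothesis that $S$ does not contain a line segment (so that \Cref{thm:pointvsS} applies) --- if $S$ contained a line segment there would be no bound at all.

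Once we know $\rho=\max_x\Pr(X=x)\ge n^{-C'}$, I would apply the Tao--Vu inverse theorem (\cite{TV09b}) to the point concentration of $X$. That theorem says precisely that if $\Pr(X=x)\ge n^{-C'}$ for some $x$, then for any $\alpha>0$ all but at most $n^{\alpha}$ of the $a_i$ lie in a generalised arithmetic progression of rank at most $q=q_{d,C',\alpha}$ and size at most $n^{B}$ with $B=B_{d,C',\alpha}$. Substituting $C'=C'(d,C,r)$, we get $q$ and $B$ depending only on $d$, $C$, $\alpha$ (the dependence on the complexity $r$ of $S$ and on $\mathcal F$ is absorbed because $c$ is bounded below and $C_S$ is bounded in terms of $d,r$; strictly speaking $q,B$ depend on $d,C,\alpha$ and the statement already allows that), which is exactly the conclusion of \Cref{thm:baby-inverse}. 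One small bookkeeping point: the Tao--Vu theorem is usually stated for $d$-dimensional vectors over $\ZZ^d$ or $\RR^d$ and for the version ``$\rho$ polynomially large'' rather than ``a specific point probability large'', but these are equivalent --- $\rho\ge n^{-C'}$ is witnessed by some single $x$ --- so no extra work is needed there.

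\textbf{The main obstacle} is essentially none: the real content has already been packed into \Cref{thm:pointvsS} (which in turn rests on \Cref{prop:o-minimal-complexity} and the hypergraph Kővári--Sós--Turán theorem) and into the Tao--Vu inverse theorem, which we are allowed to cite as a black box. The only things to be careful about are (i) making sure the quantifier order is right --- ``$n$ sufficiently large in terms of $\mathcal F$ and the complexity of $S$'' must come \emph{after} fixing $\alpha, C, d$, since the threshold for $n$ in \Cref{thm:pointvsS} (via $C_S$) depends on the complexity of $S$, and the Tao--Vu threshold depends on $d, C', \alpha$; both are fine --- and (ii) tracking that $c=1/((k+1)2^k)\ge 1/((d+1)2^d)$ is bounded away from $0$ uniformly in $S$, so that $C'=(1/c)(C+\log_n C_S)$ really is controlled. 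Assembling these, the proof of \Cref{thm:baby-inverse} is a two-line deduction, and I would write it as such.
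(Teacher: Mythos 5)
Your proposal is correct and is exactly the paper's argument: Theorem \ref{thm:baby-inverse} is stated in the paper as a direct consequence of \Cref{thm:pointvsS} (which turns $\Pr(X\in S)\ge n^{-C}$ into $\rho\ge n^{-C'}$ with $C'$ controlled, absorbing $C_S$ into the largeness of $n$ since the exponent $1/((k+1)2^k)\ge 1/((d+1)2^d)$) combined with the Tao--Vu inverse theorem applied to the point concentration. Your bookkeeping about the dependence of $q,B$ only on $d,C,\alpha$ and the placement of the ``$n$ sufficiently large'' quantifier matches the intended deduction.
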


We will also need the following generalisation of \cref{lem:Algebraic-weak}, in which line segments are allowed, but only in certain directions. In the proof of \cref{thm:baby-forwards}, we will use this to deduce a slight strengthening of \cref{thm:Gotsman}, which will be applied to a semi-algebraic set $\tilde S$ and coefficients $\tilde a_i$ obtained by ``lifting'' $S\subseteq \RR^d$ and $a_1,\dots,a_n\in \RR^d$ into a higher-dimensional space $\RR^q$.

\begin{prop}\label{lem:algebraic-complicated-weak}
Let $W\subseteq \mathbb{R}^q$ be a linear subspace and $\tilde{S}\subseteq \mathbb{R}^q$ a semi-algebraic set of description complexity $r$ such that every line segment in $S$ has direction vector contained in $W$. Then there is $N_{r,q}$ depending only on $r,q$ such that if $n\ge N_{r,q}$, then for any collection of vectors $\tilde a_1,\ldots,\tilde a_n$ with $\tilde a_i\not\in W$, and Rademacher random variables $\xi_1,\ldots,\xi_n$, we have $\Pr(\tilde a_1\xi_1+\dots+\tilde a_n\xi_n\in \tilde S)<1$. 
\end{prop}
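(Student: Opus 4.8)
The plan is to reduce to the line-segment-free case handled by \cref{lem:Algebraic-weak} by projecting modulo $W$. Concretely, let $\pi:\mathbb{R}^q\to \mathbb{R}^q/W\cong \mathbb{R}^{q'}$ be the quotient projection, where $q'=q-\dim W$. Since every line segment contained in $\tilde S$ has direction vector in $W$, I claim the image $\pi(\tilde S)$ contains no line segment: if $\pi(\tilde S)$ contained a segment between $\pi(u)$ and $\pi(v)$ with $\pi(u)\ne\pi(v)$, one would want to lift it back to a segment in $\tilde S$; the honest statement one actually gets from o-minimality (or, in the semi-algebraic case, from the cell decomposition / curve selection machinery) is that $\pi(\tilde S)$ is again semi-algebraic of description complexity bounded in terms of $r$ and $q$, and that if it contained a segment then $\tilde S$ would contain a point through which a segment in a non-$W$ direction passes — contradicting the hypothesis. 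So the first real step is to make this reduction rigorous: show $\dim$ and description complexity of $\pi(\tilde S)$ are controlled (Tarski--Seidenberg gives semi-algebraicity and a complexity bound depending only on $r,q$), and show $\pi(\tilde S)$ contains no line segment.

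Next, observe that $\pi(\tilde a_i)\ne 0$ for every $i$, precisely because $\tilde a_i\notin W$. Set $\bar a_i=\pi(\tilde a_i)\in \mathbb{R}^{q'}$, all nonzero, and note that for any Rademacher $\xi_1,\dots,\xi_n$ we have
$$\tilde a_1\xi_1+\dots+\tilde a_n\xi_n\in \tilde S \implies \bar a_1\xi_1+\dots+\bar a_n\xi_n\in \pi(\tilde S).$$
Therefore $\Pr(\tilde a_1\xi_1+\dots+\tilde a_n\xi_n\in \tilde S)\le \Pr(\bar a_1\xi_1+\dots+\bar a_n\xi_n\in \pi(\tilde S))$. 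Now apply \cref{lem:Algebraic-weak} to the semi-algebraic set $\pi(\tilde S)\subseteq \mathbb{R}^{q'}$ (which has no line segment and has description complexity bounded by some $r'=r'(r,q)$) and the nonzero coefficients $\bar a_1,\dots,\bar a_n$: there is $N'=N_{r',q'}$ such that for $n\ge N'$ we have $\Pr(\bar a_1\xi_1+\dots+\bar a_n\xi_n\in \pi(\tilde S))<1$. Taking $N_{r,q}:=N_{r'(r,q),q-\dim W}$ — which indeed depends only on $r$ and $q$, since $\dim W\le q$ and there are only finitely many possibilities — gives $\Pr(\tilde a_1\xi_1+\dots+\tilde a_n\xi_n\in \tilde S)<1$ for $n\ge N_{r,q}$, as desired.

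The main obstacle is the first step: verifying that $\pi(\tilde S)$ genuinely contains no line segment. A direct lifting argument is subtle because a segment in the image need not lift to a segment in $\tilde S$. The cleanest route is a proof by contradiction using definability: if $\pi(\tilde S)$ contained a segment $\sigma$, then by curve selection / definable choice there is a definable section $\gamma:\sigma\to \tilde S$ with $\pi\circ\gamma=\mathrm{id}$; since $\tilde S$ contains no segment, $\gamma$ is not affine, but $\gamma$ is semi-algebraic, so on a subinterval it is $C^1$ with non-constant derivative $\gamma'$, and one can find a point where $\gamma'$ is not in $W$ (if $\gamma'$ were always in $W$ then, $\sigma$ being flat, $\gamma$ would differ from an affine map with image in $W$ by... — actually since $\pi\gamma$ is affine, $\gamma'$ projects to a constant, so $\gamma'(t)-\gamma'(t_0)\in W$ for all $t$; picking $t$ with $\gamma'(t)\ne\gamma'(t_0)$ gives $\gamma'(t)\notin\mathrm{span}(\gamma'(t_0))$ but this alone doesn't force $\gamma'(t)\notin W$). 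So the cleanest statement to aim for is: some tangent line to the graph of $\gamma$ meets $\tilde S$ in a segment — no, that also fails. I will instead argue: restrict to a subinterval where $\gamma$ is real-analytic; if $\tilde S$ contains no segment then $\gamma$ is nowhere affine, so its image, a definable arc, has at each point a well-defined tangent direction, and by the hypothesis all these tangent directions lie in $W$; but then $\pi\circ\gamma$ has everywhere-zero derivative on that subinterval, contradicting that $\pi\circ\gamma$ parametrizes a nondegenerate segment $\sigma$. This is the crux and is where I expect to spend the most care; everything after it is a routine application of \cref{lem:Algebraic-weak} and Tarski--Seidenberg.
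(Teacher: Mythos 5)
There is a genuine gap, and it is at the very first step: the claim that $\pi(\tilde S)$ contains no line segment is false. The hypothesis constrains only the actual line segments contained in $\tilde S$; it says nothing about segments appearing in the image of $\tilde S$ under the quotient map, and a curved set with no segments at all can project onto a set full of segments. Concretely, take $q=2$, $W=\operatorname{span}\{(0,1)\}$, and $\tilde S=\{(x,x^2):x\in\mathbb{R}\}$. Then $\tilde S$ contains no line segment whatsoever (so the hypothesis holds vacuously), yet $\pi(\tilde S)=\mathbb{R}$, which is a union of segments; worse, for this image the statement you want to apply \cref{lem:Algebraic-weak} to is simply false, since $\Pr(\bar a_1\xi_1+\dots+\bar a_n\xi_n\in\mathbb{R})=1$ for any nonzero $\bar a_i$. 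So the reduction discards exactly the information (the curvature of $\tilde S$ transverse to $W$) that makes the proposition true, and it cannot be repaired by tightening the complexity bookkeeping. Your attempted rescue in the last paragraph has the same flaw: from ``every line segment in $\tilde S$ has direction in $W$'' you cannot conclude that the tangent directions of a definable arc $\gamma\subset\tilde S$ lie in $W$ --- in the parabola example the tangents $(1,2x)$ are never in $W$ --- so the contradiction you aim for (``$\pi\circ\gamma$ has zero derivative'') does not materialize.

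The paper's proof avoids projecting the set and instead works in $\mathbb{R}^q$ by induction on $\dim\tilde S$. Writing $\pi$ for a projection with kernel $W$, one first shows $\pi(\Bad(\tilde S))$ is finite, with $|\pi(\Bad(\tilde S))|\le M_{r,q}$: otherwise the $\varepsilon$-isometric-pair argument from \cref{lem:BadBounded} produces arbitrarily long arithmetic progressions in $\tilde S$ in a direction not parallel to $W$, contradicting the variant of \cref{cor:unifFinite} in which only lines not parallel to $W$ are considered (this is precisely where the hypothesis on segment directions enters). Then one splits $X=X_M+X'$, where $X_M$ uses the first $M$ coefficients; since each $\pi(\tilde a_i)\ne 0$, the support of $\pi(X_M)$ has at least $M+1$ distinct points, so by pigeonhole two support points of $X_M$ differ by a vector outside $\Bad(\tilde S)$, forcing $X'$ (if $X$ were almost surely in $\tilde S$) to be supported in a definable set of strictly smaller dimension and bounded complexity, and the induction closes. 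Your proposal does not contain a substitute for this dimension-dropping mechanism, so as written the proof does not go through.
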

\begin{proof}
Instead of description complexity, it is convenient to work with complexity in the o-minimal structure $\RR_\emptyset=\RR_{\mathrm{alg}}$ (by the Tarski--Seidenberg theorem, this is equivalent to working with semi-algebraic description complexity). We show that there is $N_{k,r,q}$ which works for all $\tilde{S}\subset \mathbb{R}^q$ of complexity at most $r$ and dimension at most $k$, by induction on $k$.

For the base case $k=0$, every 0-dimensional semi-algebraic set $\tilde{S}$ is a finite union of points, and the number of points in $\tilde S$ is bounded in terms of $r$ and $q$. So, the existence of suitable $N_{0,r,q}$ follows from the Erd\H os--Littlewood--Offord theorem. Now, consider $\tilde{S}$ with dimension $k\ge 1$.

Let $\pi:\mathbb{R}^q\to \mathbb{R}^d$ be a linear projection with kernel $W$. We first claim that $\pi(\Bad(\tilde{S}))$ is finite. Indeed, suppose it is not. Then, as in the proof of \Cref{lem:BadBounded}, there exists $\varepsilon>0$ and an uncountable collection of $\tilde{t}\in \tilde S$ with $\pi(\tilde{t})$ distinct, for which there are points $\tilde{x}_{\tilde{t}}\in \tilde{S}$ such that $\tilde{x}_{\tilde{t}}$ and $\tilde{x}_{\tilde{t}}+\tilde{t}$ are $\varepsilon$-isometric. Then, again as in the proof of \Cref{lem:BadBounded}, for any $\eta>0$ we can find an arithmetic progression of length $(\varepsilon-2\eta)/\eta$ contained in $\tilde{S}$, in some direction $\tilde t_1-\tilde t_2$. Since $\pi(\tilde t_1)\ne \pi(\tilde t_2)$, this direction is not contained in $W$. Taking $\eta>0$ arbitrarily small, we have found arbitrarily many points of $\tilde S$ contained in a line $\ell$ not parallel to $W$. But this is a contradiction, as the number of such points must be bounded as a function of $r$ and $q$, by the same proof as for \Cref{cor:unifFinite} (with $w$ constrained not to lie in $W$).

Let $M=M_{r,q}$ be large enough (in terms of $r$ and $q$) such that $|\pi(\Bad(\tilde{S}))|\le M$ (the existence of such an $M$ follows from \Cref{cor:boundorinf} as  $\pi(\Bad(\tilde{S}))$ is finite and definable with complexity bounded in terms of $r$). Write $X_M=\tilde a_1\xi_1+\dots+\tilde a_M\xi_M$ and $X'=\tilde a_{M+1}\xi_{M+1}+\dots \tilde a_n \xi_n$, so $X=X_M+X'$. Now, given vectors $\tilde w_0,\ldots,\tilde w_{M+1}$ with $\pi(\tilde w_i)$ distinct, we then have $$\dim\left(\bigcap_{i=0}^{M+1} (\tilde {S}-\tilde w_i)\right)<\dim \tilde{S}\le k.$$ Indeed, by the pigeonhole principle, for some $i$ we have $\tilde{w}_i-\tilde{w}_0\not\in \Bad(\tilde{S})$, meaning that $\bigcap_{i=0}^M (\tilde{S}-\tilde w_i)\subseteq -\tilde{w}_0+\left(\tilde{S}\cap (\tilde{S}-(\tilde{w}_i-\tilde{w}_0))\right)$ has dimension less than $\dim \tilde{S}$. But the random variable $\pi(X_M)=\pi(\tilde a_1)\xi_1+\dots+\pi(\tilde a_M)\xi_M$ attains at least $M+1$ different values. So, for $X=\tilde a_1\xi_1+\dots \tilde a_n \xi_n$ to be supported in $S$, it must be the case that $X'$ is supported in 
$$\tilde{S}':= \bigcap_{\tilde w\in \operatorname{supp}(X_M)}(\tilde{S}-\tilde w),$$
which has dimension at most $k-1$.
The complexity of $\tilde{S}'$ is bounded above by a function $r'$ of $r$ and $q$, so for this $r'$ we can choose $N_{k,r,q}=M_{r,q}+ N_{k-1,r',q}$.
\end{proof}
The final ingredient we will need for the proof of \cref{thm:baby-forwards} is the following powerful theorem of Pila. In this statement, ``open'' is with respect to the Euclidean topology on $\RR^q$.
\begin{thm}[{\cite[Theorem 3.5(2)]{P09}}]\label{thm:pila}
Fix $\beta>0$, $q\in \NN$ and a finitely generated o-minimal structure $\RR_{\mathcal F}$, and suppose $N$ is sufficiently large in terms of $\beta,q,\mathcal F$. For any definable set $\tilde{S}\subseteq \mathbb{R}^q$ of complexity $r$, there are $s\le N^\beta$ open subsets $\tilde U_1,\dots,\tilde U_s$ of semi-algebraic sets $\tilde T_1,\dots, \tilde T_s$, with each $\tilde U_i\subseteq \tilde S$ and each $\tilde T_i$ with description complexity bounded in terms of $\mathcal F,r,q$, such that $\{0,\ldots,N\}^q \cap \tilde{S}\subseteq \tilde U_1\cup\dots\cup \tilde U_s$.
\end{thm}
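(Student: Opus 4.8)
The plan is to derive this as a case of the Pila--Wilkie counting theorem in its ``block'' (covering) form, following Pila \cite{P09}; the two external engines are the o-minimal $C^\ell$-\emph{reparametrization theorem} of Pila and Wilkie (a strengthening of the Yomdin--Gromov theorem to definable sets) and the \emph{interpolation-determinant method} of Bombieri--Pila and Heath-Brown. The whole argument runs by a double induction on the pair $(q,\dim\tilde S)$.

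First I would normalize. Rescaling $\RR^q$ by $1/N$ turns $\{0,\dots,N\}^q\cap\tilde S$ into the set of rational points of height dividing $N$ inside a rescaled copy of $\tilde S$ in $[0,1]^q$, so it suffices to cover these rational points. We may assume $\tilde S\subseteq(0,1)^q$ has pure dimension $k$: lower-dimensional components of $\tilde S$, and $\tilde S$ intersected with the boundary of the cube (which is lower-dimensional), are handled by the inductive hypothesis. Fix a small $\theta>0$ depending only on $\beta$ and $q$; the induction loses a factor of at most $N^{q\theta}$ at each of its boundedly-many (at most $q$) levels, and $\theta$ is chosen so that these losses total at most $N^{\beta}$.

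Next, the reparametrization step: for a smoothness parameter $\ell$ to be chosen large (in terms of $q$, $\theta$, and the degree $e$ below), the reparametrization theorem covers $\tilde S$ by the images $\phi_1((0,1)^k),\dots,\phi_J((0,1)^k)$ of $C^\ell$ maps $\phi_j\colon(0,1)^k\to\tilde S$ with all partial derivatives of order $\le\ell$ bounded by $1$, where $J$ and the complexities of the $\phi_j$ depend only on $\mathcal F$, $q$, $\ell$, and the complexity $r$ of $\tilde S$. Partition $(0,1)^k$ into $\le N^{k\theta}$ sub-boxes of side $\le N^{-\theta}$. Now fix a patch $\phi_j$, a sub-box $B$, a degree $e$, and let $D=\binom{e-1+q}{q}$; for any $D$ points of $\phi_j(B)$ that are rational of height $\le N$, the interpolation determinant $\det\big(m_a(\phi_j(u_i))\big)_{1\le a,i\le D}$ --- with $m_1,\dots,m_D$ enumerating the degree-$<e$ monomials on $\RR^q$ --- has denominator dividing $N^{(e-1)D}$, so a nonzero value is at least $N^{-(e-1)D}$, whereas Taylor-expanding the rows on $B$ and using the derivative bounds gives an upper bound of the shape $C(e,q,\ell)\,N^{-L(D,k)\theta}$ with $L(D,k)$ growing superlinearly in $D$. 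When $k<q$ one can pick $e$ (hence $D$) large enough in terms of $q,k,\theta$ that $L(D,k)\theta>(e-1)D$, and then $\ell$ large enough that the determinant estimate is valid; the determinant then vanishes for every such $D$-tuple, so all rational points of height $\le N$ in $\phi_j(B)$ lie on a single hypersurface $Z_B$ of degree $<e$.

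It remains to close the induction. If $\phi_j(B)\not\subseteq Z_B$, then $Z_B\cap\phi_j(B)$ is definable of dimension $<k$, with complexity bounded in terms of $\mathcal F,q,r$ (and the now-fixed $e,\ell$), so the inductive hypothesis covers its rational points by $\le N^{\beta'}$ semi-algebraic sets contained in $Z_B\cap\phi_j(B)\subseteq\tilde S$; summing over the $\le N^{k\theta}$ sub-boxes and the $O(1)$ patches keeps the total below $N^{k\theta+\beta'}\le N^{\beta}$, and the ``relatively open'' structure of the blocks is inherited because the $\phi_j$ live on the open cube and o-minimal cell decomposition produces open pieces. The two genuinely delicate cases --- and the main obstacle --- are the \emph{algebraic} case $\phi_j(B)\subseteq Z_B$ and the \emph{full-dimensional} case $k=q$ (where the determinant method gives nothing). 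In the algebraic case the rational points lie in $Z_B\cap\tilde S$, whose Zariski closure has dimension $\le q-1$; covering the nonsingular locus of $Z_B$ by boundedly-many bounded-complexity semi-algebraic charts and pulling back reduces to the theorem with $q$ replaced by $q-1$ (and $N$ by $N^{O(1)}$, absorbed by rescaling $\beta$), which the outer induction on $q$ handles, the base of that descent being the full-dimensional case; there one combines cell decomposition with further rounds of the subdivision machinery along the boundary strata of $\tilde S$ to extract boundedly-many semi-algebraic blocks inside $\tilde S$. Making this last step precise, while propagating the uniform complexity bounds (a fixed function of $\mathcal F,q,r$) through both inductions, is the technical heart of the proof.
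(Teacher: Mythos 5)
The first thing to say is that the paper does not prove \cref{thm:pila} at all: it is imported verbatim (up to a cosmetic reformulation) from Pila \cite[Theorem 3.5(2)]{P09}, and the only original content attached to it is the remark that the sets $\tilde T_1,\dots,\tilde T_s$ produced by Pila's construction have bounded \emph{description} complexity, not merely bounded complexity as definable sets. So there is no in-paper argument to compare against; what you have written is an attempt to re-prove the cited block-counting theorem. Your outline does follow the strategy of the source (o-minimal $C^\ell$-reparametrization, subdivision into $\approx N^{k\theta}$ boxes, Bombieri--Pila interpolation determinants forcing the rational points of each small patch onto a hypersurface $Z_B$ of degree $<e$, induction on dimension), and the determinant step as you state it is essentially correct; your construction via degree-$<e$ hypersurfaces would also naturally yield the description-complexity bound the paper cares about.

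However, as a proof the proposal has a genuine gap, and it sits exactly where you place the phrase ``the technical heart'': the algebraic case $\phi_j(B)\subseteq Z_B$ (and, relatedly, the extraction of blocks inside $\tilde S$ covering the points of the algebraic part). This is precisely what separates the block theorem you need from the plain Pila--Wilkie counting theorem, which simply discards those points. Your proposed reduction ``cover the nonsingular locus of $Z_B$ by boundedly many bounded-complexity charts and pull back to $\RR^{q-1}$'' does not work as stated: a chart of an algebraic hypersurface does not in general carry rational points of height $\le N$ to rational points of comparable height (this only works for coordinate projections, after which one must lift coverings back through $Z_B$ and still certify that the resulting pieces are contained in $\tilde S$, are open subsets of semialgebraic sets, and have bounded description complexity). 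Moreover there are up to $N^{k\theta}$ different hypersurfaces $Z_B$, whose coefficients vary with $B$ and $N$, so the induction must be applied \emph{uniformly} to the definable family of intersections $\tilde S\cap Z$ over the parameter space of degree-$<e$ hypersurfaces, with all complexity bounds independent of the parameter; this uniform-in-families induction, organised through Pila's ``basic block families'', is the substantive content of \cite[Theorem 3.5]{P09} and is not supplied by your sketch. (The full-dimensional case $k=q$, by contrast, is comparatively cheap: interior points of $\tilde S$ lie in boundedly many definable open blocks, which are open subsets of the semialgebraic set $\RR^q$, and the remainder has lower dimension.) So the proposal should be regarded as a correct high-level summary of the known proof of the cited theorem rather than a proof: the missing ingredient is the uniform block-family construction for the algebraic part.
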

\begin{rem}
The statement of \cref{thm:pila} above is not exactly the same as \cite[Theorem 3.5(2)]{P09}. Indeed, Pila states his theorem in terms of a ``basic block family'', and technically this statement only implies that the semi-algebraic sets $\tilde T_1,\dots,\tilde T_s$ have complexity bounded in terms of $r,q$, when interpreted as sets definable with respect to $\RR_{\mathcal F}$. This is not quite enough to guarantee a bound on description complexity (such a bound would follow from the Tarski--Seidenberg theorem only if ${\mathcal F}=\emptyset$). However, in the construction in the proof of \cite[Theorem 3.5(2)]{P09}, one can see directly that $\tilde T_1,\dots,\tilde T_s$ have bounded description complexity.
\end{rem}
Now we prove \cref{thm:baby-forwards}.
\begin{proof}[Proof of \cref{thm:baby-forwards}]
Let $\mathcal F$ be the generators for the o-minimal structure that $S$ is defined with respect to. Applying \cref{thm:baby-inverse} with $C=1/2$ and say $\alpha=1/2$, we immediately have $\Pr(X\in S)\le 1/\sqrt n$ (for large $n$) unless there is a generalised arithmetic progression $Q$ with rank at most $q=O(1)$ and size at most $n^B=n^{O(1)}$, such that $a_i\in Q$ for at least $n/2$ indices $i$. So, we may assume there is such a $Q$, and without loss of generality we may assume $a_1,\dots,a_{\floor{n/2}}\in Q$. Let $X_1=\sum_{i\le \lfloor n/2 \rfloor} a_i\xi_i$ and $X_2=\sum_{i>\lfloor n/2 \rfloor} a_i\xi_i$, and condition on any outcome of $X_2$ and let $S'=S-X_2$. For the remainder of the proof our goal is to show that $\Pr(X_1\in S')\le n^{-1/2+o(1)}$. Let $m=\lfloor n/2\rfloor$ in what follows.

We may assume that $Q$ is homogeneous (having base point $b=0$), by adding an additional generator if necessary. Consider the projection $\pi:\mathbb{R}^q\to \mathbb{R}^d$ sending the standard basis vectors in $\RR^q$ to the generators of $Q$, and let $\tilde a_1,\ldots,\tilde a_m\in \{0,\ldots,n^B\}^q$ be such that $\pi(\tilde a_i)=a_i$ for each $i$. Then with $\tilde{X_1}$ the random variable $\tilde{a}_1\xi_1+\dots+\tilde{a}_m\xi_m$, and $\tilde S'$ the definable set $\pi^{-1}(S')$, we have $$\Pr(X_1\in S')= \Pr(\tilde{X}_1\in \tilde S')=\Pr(\tilde{X}_1\in \tilde S'\cap \{0,\ldots,n^{B+1}\}^q).$$

Now, fix $\varepsilon>0$. From now on, the implicit constants in asymptotic notation are allowed to depend on $q,r,\mathcal F,\varepsilon$. By \cref{thm:pila} (with $N=n^{B+1}$ and $\beta=\varepsilon/(2B+2)$), for some $s\le n^{\varepsilon/2}$ there are open subsets $\tilde U_1,\dots,\tilde U_s$ of semi-algebraic sets $\tilde T_1,\dots,\tilde T_s$, with each $\tilde U_i\subseteq \tilde S'$ and each $\tilde T_i$ with description complexity $O(1)$, such that $\tilde S'\cap \{0,\ldots,n^{B+1}\}^q\subseteq \tilde U_1\cup \dots\cup \tilde U_s$.

Since each $\pi(\tilde U_i)\subseteq S$ and $S$ contains no line segment, any line segment that might be contained in $\tilde U_i$ must have direction vector in the kernel $W$ of $\pi$. Now, for each $i$ let $\tilde T_i'$ be the subset of $\tilde T_i$ obtained by removing all line segments with direction vectors not in $W$. Note that $\tilde T_i'$ is itself a semi-algebraic set with complexity $O(1)$: indeed, $\tilde T_i'$ can be described by a first-order formula with length $O(1)$, so this follows from the Tarski--Seidenberg theorem. If an open subset of $\tilde T_i$ contains a single element of a line segment in $\tilde T_i$, then it contains an entire sub-segment. So, $\tilde U_i\subseteq \tilde T_i'$. Now, using \cref{lem:algebraic-complicated-weak} instead of \cref{lem:Algebraic-weak}, the proof of \cref{thm:Gotsman} shows that $$\Pr(\tilde{X}_1\in \tilde U_i)\le \Pr(\tilde{X}_1\in \tilde T_i')\le m^{-1/2+o(1)}=n^{-1/2+o(1)}$$ for each $i$. Taking the union bound over all $i$, and assuming $n$ is sufficiently large, we have \[\Pr(X_1\in S')=\Pr(\tilde{X}_1\in \tilde S'\cap \{0,\ldots,n^{B+1}\}^q)\le \sum_{i=1}^s\Pr(\tilde{X}_1\in \tilde U_i)\le sn^{-1/2+o(1)}\le n^{-1/2+\varepsilon}.\]
We conclude by taking $\varepsilon\to 0$.
\end{proof}

\section{Concluding remarks}\label{sec:concluding}
In this paper we have proposed some natural geometric variants of the Littlewood--Offord problem, and proved some bounds in several cases. Of course, \cref{conj:forwards} remains open for many natural choices of $S$. The simplest open case is where $S\in \RR^4$ is the three-dimensional unit sphere in four-dimensional space. It would also be of interest to improve the bounds in \cref{thm:convex} for sets of points in convex position: can we at least get bounds of the form $\Pr(X\in S)\le n^{-1/2+o(1)}$ in this case?

\begin{conjecture}
Let $S\subseteq\RR^d$ be a set of points in convex position, and let $\varepsilon>0$. If $n$ is sufficiently large in terms of $\varepsilon,d$, then the following holds.
Consider nonzero $d$-dimensional vectors $a_{1},\dots,a_{n}\in\RR^{d}$, and write
$X=a_{1}\xi_{1}+\dots+a_{n}\xi_{n}$, where $\xi_{1},\dots,\xi_{n}$
are i.i.d.\ Rademacher random variables. Then $$\Pr(X\in S)\le n^{-1/2+\varepsilon}.$$
\end{conjecture}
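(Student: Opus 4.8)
The plan is to induct on $d$, following the ``outer loop'' of \cref{thm:variousS} and \cref{thm:hypercompabs} to isolate a ``robustly spanning'' core case, and then to attack that case by transporting the inverse-theorem strategy of \cref{thm:baby-forwards} to the convex setting. The case $d\le 2$ is already contained in \cref{thm:convex}, which gives the sharp bound $\Pr(X\in S)\le C_2n^{-1/2}$. For $d\ge 3$, fix an equipartition $\{1,\dots,n\}=I_1\sqcup\dots\sqcup I_d$. If some part $(a_i)_{i\in I_j}$ has point concentration exceeding $C_d(n/d)^{-d/2}$ (with $C_d$ as in \cref{thm:Halasz}), then by \cref{thm:Halasz} at least half of those vectors lie in a hyperplane $W$; conditioning on the remaining coordinates and using that $(S-X_2)\cap W$ is again a set of points in convex position inside $W\cong\RR^{d-1}$, the inductive hypothesis gives $\Pr(X\in S)\le O_d(n^{-1/2+\varepsilon/2})\le n^{-1/2+\varepsilon}$. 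So we may assume $\rho:=\max_x\Pr(X=x)\le C_d(n/d)^{-d/2}=O_d(n^{-d/2})$.

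It remains to treat this robustly spanning case, where the convex analogue of \cref{thm:pointvsS} available to us -- namely $\Pr(X\in S)\le C_d\rho^{1/(d2^{d-1})}$ from the second part of \cref{thm:convex} -- is too weak, yielding only $n^{-1/2^d}$. Instead I would argue as in \cref{thm:baby-forwards}: combining the second part of \cref{thm:convex} with the Tao--Vu inverse theorem exactly as in \cref{thm:baby-inverse}, either $\Pr(X\in S)\le n^{-1/2+\varepsilon}$ already, or almost all $a_i$ lie in a generalised arithmetic progression $Q$ of bounded rank $q$ and size $n^{O(1)}$. Conditioning on the few exceptional coordinates and lifting through a projection $\pi\colon\RR^q\to\RR^d$ as in the proof of \cref{thm:baby-forwards}, the task reduces to bounding $\Pr(\tilde X\in\tilde S\cap\{0,\dots,n^{B}\}^q)$, where $\tilde X=\sum_i\tilde a_i\xi_i$ with $\tilde a_i\in\{0,\dots,n^{B}\}^q\setminus W$ for $W=\ker\pi$, and $\tilde S=\pi^{-1}(S)$ is a set every line segment of which has direction in $W$.

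In \cref{thm:baby-forwards} this point was handled by Pila's covering theorem (\cref{thm:pila}), which does not apply here since $\tilde S$ is not definable. The crux of the proposal is therefore to prove a \emph{convex-geometric substitute}: the lattice points of a strictly convex hypersurface inside $\{0,\dots,N\}^q$ can be covered by $N^{o(1)}$ sets, each of which is either a piece of an affine subspace with direction in $W$ -- which contributes $n^{-1/2+o(1)}$ via \cref{lem:algebraic-complicated-weak} and the argument of \cref{thm:Gotsman}, after projecting by $\pi$ -- or a set whose image under $\pi$ has the generic intersection property of \cref{def:generic}, so that \cref{thm:variousS} applies and gives $n^{-1/2}$ on that piece. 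A natural route is a determinant/subdivision argument in the spirit of Bombieri--Pila~\cite{BP89} and of \cite{P09}: at scale $1/N$ a strictly convex hypersurface in a box is approximated in $C^2$ by $N^{o(1)}$ graphs of uniformly convex functions of bounded $C^2$ norm, and one would want each such graph to admit a further subdivision into $N^{o(1)}$ pieces on which any $d$ translates meet in $O(1)$ points.

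The main obstacle is precisely this covering step, which is why the conjecture is open. A general convex hypersurface, unlike an algebraic or definable one, can carry polynomially many lattice points in a box -- already in $\RR^2$ a strictly convex curve can contain $\Theta(N^{2/3})$ points of $\{0,\dots,N\}^2$, matching the Jarn\'ik bound -- so there is genuine ``curvature oscillation'' with no algebraic structure to exploit, and the elementary zonotope vertex bound behind \cref{lem:Zonotope-bound-weak} (which controls $S$ only as a whole) does not survive passage to the subsets $\tilde S\cap\{0,\dots,N\}^q$. A reasonable intermediate target, accessible via lattice-point and affine-surface-area techniques, is to first establish $\Pr(X\in S)\le n^{-1/2+o(1)}$ when $S$ is the boundary of a convex body with $C^\infty$ boundary and curvature bounded away from zero, and then attempt to reduce the general convex-position case to this one by decomposing $\partial\conv(S)$ into boundedly curved pieces.
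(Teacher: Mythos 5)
You should first be aware that the paper does not prove this statement: it appears in the concluding remarks precisely as an open conjecture, and the strongest bounds the paper obtains for convex position are $C_dn^{-d/2^d}$ and $C_d\rho^{1/(d2^{d-1})}$ from \cref{thm:convex}. So there is no proof in the paper to compare against, and your proposal must stand on its own. Its outer reductions are sound in outline: the induction on $d$ via \cref{thm:Halasz} works because a translate of a convex-position set intersected with a hyperplane is again in convex position, and the inverse-theorem step can indeed be imitated, since the second part of \cref{thm:convex} shows that $\Pr(X\in S)\ge n^{-C}$ forces $\rho\ge n^{-C'}$, so the Tao--Vu theorem applies exactly as in \cref{thm:baby-inverse} with \cref{thm:convex} playing the role of \cref{thm:pointvsS}. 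The reduction to bounding $\Pr(\tilde X\in\pi^{-1}(S)\cap\{0,\dots,N\}^q)$ with $\tilde a_i\notin W=\ker\pi$ is also legitimate, because a set in convex position contains no three collinear points, hence no segment, so every segment of $\pi^{-1}(S)$ has direction in $W$.

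The genuine gap is the step you yourself flag: the covering of $\pi^{-1}(S)\cap\{0,\dots,N\}^q$ by $N^{o(1)}$ pieces, each either affine with directions in $W$ or projecting to a set with the generic intersection property of \cref{def:generic}. Nothing in the paper, and nothing in your sketch, supplies this. Every tool used at the analogous point in the definable case (\cref{thm:pila}, Kane's theorem via \cref{thm:Gotsman} and \cref{lem:algebraic-complicated-weak}, the self-irreducible decomposition behind \cref{prop:o-minimal-complexity}) requires definability or bounded description complexity, and an arbitrary convex-position set has neither; it need not even be a hypersurface, and $\pi^{-1}(S)$ is a cylinder over a possibly wild set rather than a strictly convex surface, so even stating the covering claim precisely is delicate. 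Uniformity is also a real problem for the second kind of piece: \cref{thm:variousS} gives $C_Sn^{-1/2}$ with $C_S$ depending on the constant $D$ in \cref{def:generic}, so you would need $n^{o(1)}$ pieces with a uniformly bounded $D$; already for the unit $3$-sphere in $\RR^4$ (which the paper singles out as the simplest open case of \cref{conj:forwards}) four translates can intersect in an entire circle, so the generic intersection property fails globally and no such decomposition is known. A Bombieri--Pila-style determinant argument cannot bridge this on its own, precisely because, as you note, a strictly convex arc can carry $\Theta(N^{2/3})$ lattice points with no algebraic structure to exploit; and the zonotope argument of \cref{lem:Zonotope-bound-weak} controls only the whole of $S$, not the pieces produced by the lift. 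Your intermediate target (boundaries with curvature bounded away from zero) is a sensible research direction, but it too is not established here, so the proposal is a plausible strategy outline rather than a proof.
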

The analogue of \cref{conj:forwards} also plausibly holds in this case.

Regarding \cref{thm:pointvsS}, it would be good to understand the best possible bound on $\Pr(X\in S)$ in terms of the point probability $\rho=\max_{x\in \RR^d}\Pr(X=x)$. If one were able to prove that $\Pr(X\in S)=O(\rho^{1/k})$, this would actually imply \cref{conj:forwards}, using Hal\'asz' theorem (\cref{thm:Halasz}) in a similar way to the proof of \cref{thm:variousS}.

Also, recall that the proof of \cref{thm:Gotsman} proceeded via a bound on the average sensitivity of polynomial threshold functions, related to the Gotsman--Linial conjecture. As we discussed in \cref{subsec:polynomial}, a full resolution of the Gotsman--Linial conjecture would imply \cref{conj:forwards} for affine varieties, but actually the following ``bounded'' version of the Gotsman--Linial conjecture would suffice, and may be of independent interest.

\begin{conjecture}\label{conj:GL-case}
Let $S\subseteq \RR^d$ be a semi-algebraic set with description complexity $r$, and for nonzero coefficients $a_1,\dots,a_n\in \RR^n$ let $f(\xi_1,\dots,\xi_n)$ be the Boolean function measuring whether $a_1\xi_1+\dots+a_n\xi_n\in S$. Then $f$ has average sensitivity at most $C_{r,d}\sqrt n$, for some $C_{r,d}$ depending only on $r$ and $d$.
\end{conjecture}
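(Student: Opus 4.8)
The plan is to reduce the statement to bounding the average sensitivity of a single polynomial threshold function of a special ``low-rank'' form, and then to estimate that average sensitivity by a Gaussian surface area via an invariance-principle argument. Concretely, since $1_S$ can be written as a Boolean combination of at most $r$ sign conditions $\operatorname{sgn}(g_j)$ with $\deg g_j=O(r)$, and average sensitivity is subadditive under Boolean combinations (as in the proof of \cref{lem:semialgsens}), it suffices to bound $\operatorname{AS}(\operatorname{sgn}(g\circ L))$ for a single polynomial $g\colon\RR^d\to\RR$ of degree $O(r)$, where $L\colon\{-1,1\}^n\to\RR^d$ is the map $\xi\mapsto a_1\xi_1+\dots+a_n\xi_n$. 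In other words, the conjecture asserts the Gotsman--Linial bound for those polynomial threshold functions $\operatorname{sgn}(p)$ whose defining polynomial $p=g\circ L$ factors through a linear map to a space of bounded dimension $d$, with a constant allowed to depend on $d$. (For $d=1$ this is classical: $S$ is a union of $O(r)$ intervals, hence $f$ is a $\pm1$-combination of $O(r)$ linear threshold functions, each of average sensitivity $O(\sqrt n)$.) Passing to the span of the $a_i$ if necessary we may assume $A=[a_1|\cdots|a_n]$ has full rank $d$, and then applying an invertible $T$ with $T^{\mathsf T}T=(AA^{\mathsf T})^{-1}$ (which replaces $S$ by $TS$ and $a_i$ by $Ta_i$, changing neither $\operatorname{AS}(f)$ nor the complexity of $S$) we may assume $\sum_i a_ia_i^{\mathsf T}=I_d$, so $\sum_i\|a_i\|^2=d$ and most columns are tiny. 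In what follows I treat a general semialgebraic $S$ of complexity $O(r)$ (the single-threshold case above being $S=\{x:g(x)\ge 0\}$) and write $f=1_S\circ L$.

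Next I would regularise by conditioning on the heavy columns. Fix $\tau=n^{-\varepsilon}$, let $H=\{i:\|a_i\|>\tau\}$, so $|H|\le d\tau^{-2}=dn^{2\varepsilon}$, and condition on $\xi_H$: this turns $f$ into $f_{\xi_H}=1_{S-c}\circ L'$, where $c=\sum_{i\in H}a_i\xi_i$ and $L'$ is the restriction of $L$ to the light coordinates --- a linear map into $\RR^d$ with all columns of norm at most $\tau$ and covariance $\Sigma=\sum_{i\notin H}a_ia_i^{\mathsf T}\preceq I_d$. Since $\Inf_i(f)=\E_{\xi_H}[\Inf_i(f_{\xi_H})]$ for light $i$, while the heavy coordinates contribute at most $|H|=o(\sqrt n)$ in total, it suffices to bound $\operatorname{AS}(f_{\xi_H})$ for each fixed $\xi_H$, i.e.\ for an $n^{-\varepsilon}$-regular linear map $L'$. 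For such $L'$, the influence $\Inf_i(f_{\xi_H})$ equals the probability that $L'(\xi)$ lands in the thin symmetric difference $(S-c)\,\triangle\,(S-c+2a_i)$, a slab of width $O(\|a_i\|)$ around $\partial(S-c)$ weighted by the boundary-normal component of $a_i$. A $d$-dimensional local limit (anti-concentration) estimate for regular Rademacher sums --- controlling how much mass $L'(\xi)$ places near a semialgebraic hypersurface relative to the density $\varphi_\Sigma$ of $\mathcal N(0,\Sigma)$ --- should give, with $\nu(x)$ the unit normal to $\partial(S-c)$,
$$\operatorname{AS}(f_{\xi_H})=\sum_{i\notin H}\Inf_i(f_{\xi_H})=O\Bigl(\int_{\partial(S-c)}\Bigl(\sum_{i\notin H}|\langle a_i,\nu(x)\rangle|\Bigr)\varphi_\Sigma(x)\,dx\Bigr)+n^{1/2-\kappa\varepsilon}$$
for some $\kappa>0$. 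By Cauchy--Schwarz, $\sum_{i\notin H}|\langle a_i,\nu(x)\rangle|\le\sqrt n\,(\nu(x)^{\mathsf T}\Sigma\,\nu(x))^{1/2}\le\sqrt n$, so the first term is at most $\sqrt n$ times the $\mathcal N(0,\Sigma)$-surface area of the translate $S-c$.

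It remains to know that a semialgebraic set of description complexity $r$ in $\RR^d$ has Gaussian surface area, with respect to any centred Gaussian of covariance $\preceq I_d$ and in particular uniformly over all of its translates, bounded by some $C_{r,d}$. This is a fact of a standard type: it holds for convex bodies by Nazarov's theorem and for sublevel sets of a single bounded-degree polynomial by work of Kane, and for general bounded-complexity semialgebraic sets one expects it from the same circle of ideas, or from a definable-families/compactness argument (the surface area is a finite quantity that varies tamely over a bounded-complexity family). Granting this, the steps combine to give $\operatorname{AS}(f)\le C_{r,d}\sqrt n+o(\sqrt n)$, as required.

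The crux --- and the reason the conjecture is open --- is the local limit input: one must show that a regular linear image of a Rademacher vector spreads its mass along a semialgebraic hypersurface no more than a Gaussian does, with an error that stays $o(\sqrt n)$ after summing $n$ influences. For general degree-$r$ polynomial threshold functions this is precisely where Kane's proof must \emph{iteratively} regularise, incurring the $(\log n)^{O(r\log r)}$ loss, because a low-degree polynomial can conceal structure that removing a few heavy variables does not expose. The point of the rank-at-most-$d$ hypothesis should be that regularisation here is a \emph{one-shot} operation --- conditioning once on the $O_{r,d}(n^{2\varepsilon})$ heavy linear coordinates already makes $L'$ regular in every direction --- so no iteration, and hence no polylogarithmic loss, is needed. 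Turning this heuristic into a rigorous argument, with a sharp enough anti-concentration estimate for $d$-dimensional regular Rademacher sums against semialgebraic hypersurfaces, is the substance of what must be proved.
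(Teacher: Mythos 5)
The statement you are trying to prove is \cref{conj:GL-case}, which the paper does not prove: it is stated in the concluding remarks as an open problem, observed there to be equivalent to the special case of the Gotsman--Linial conjecture for polynomial threshold functions of the form $g\circ\pi$ with $\pi$ a linear projection to $\RR^d$. So there is no paper proof to match your argument against, and your proposal, as you yourself acknowledge in the final paragraph, is a reduction plus a heuristic rather than a proof. The sound parts are the routine ones: subadditivity of average sensitivity over the $O(r)$ sign conditions (as in \cref{lem:semialgsens}), the affine normalisation $\sum_i a_ia_i^{\mathsf T}=I_d$, the one-shot removal of the $O(dn^{2\varepsilon})$ heavy coordinates, and the Gaussian surface area input (after the change of variables $x\mapsto\Sigma^{-1/2}x$, which absorbs your anisotropic weight $(\nu^{\mathsf T}\Sigma\nu)^{1/2}$ and keeps the set semi-algebraic of the same description complexity, this is indeed available uniformly from Kane's dimension-free Gaussian surface area bound for bounded-degree polynomial threshold functions, or from Nazarov's theorem in the convex case).

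The genuine gap is the displayed ``local limit'' inequality, which you introduce with ``should give'' and never argue. Quantitatively it is not a routine invariance-principle step: each light influence $\Inf_i(f_{\xi_H})$ is the probability of a slab event whose Gaussian value is of order $\|a_i\|\approx n^{-1/2}$, so to make the total error $o(\sqrt n)$ you need the Rademacher-to-Gaussian comparison to be accurate to $o(n^{-1/2})$ \emph{per coordinate} (or to exhibit cancellation across coordinates). A one-shot regularisation to $\tau=n^{-\varepsilon}$ followed by a multidimensional Berry--Esseen or CLT-with-surface-area bound gives per-event error of order $\tau$, i.e.\ total error of order $n^{1-\varepsilon}$, which swamps $\sqrt n$; this is exactly the obstruction that forces Kane's iterative regularisation and the $(\log n)^{O(r\log r)}$ loss in \cref{thm:Kane-GL}. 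Your claim that bounded rank makes a single regularisation suffice is a plausible intuition (and is presumably why the paper singles out this case as potentially tractable), but no mechanism is given for beating the $n^{-1/2}$-per-influence accuracy barrier, so the proposal does not establish \cref{conj:GL-case}; it repackages it as the unproven anti-concentration estimate in your display.
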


\cref{conj:GL-case} is equivalent to the special case of the Gotsman--Linial conjecture where we only consider polynomials of the form $f\circ \pi$, where $f$ is a $d$-variable polynomial and $\pi:\RR^n\to \RR^d$ is a linear projection.

We remark that in the special case of dimension $d=2$, even though we were able to prove \cref{conj:forwards}, it is not clear how to prove \cref{conj:GL-case}. It might also be interesting to consider the case where $S$ is a convex or definable set.

It would also be interesting to better understand which sets have the generic intersection property in \cref{def:generic}. In particular, it would be nice to know whether a random algebraic hypersurface in $\mathbb{R}^d$ (conditioned on not containing a line) has the generic intersection property almost surely. This would imply that \cref{conj:forwards} holds for ``almost all algebraic hypersurfaces''.

\textbf{Acknowledgements:} We would like to thank Jonathan Pila for a number of insightful comments and suggestions.

\bibliographystyle{amsplain_initials_nobysame_nomr}
\bibliography{bib}

\end{document}